\pgfplotsset{compat=1.15}
\newtheorem{theorem}{Theorem}[section]
\newtheorem{claim}[theorem]{Claim}
\newtheorem{lemma}[theorem]{Lemma}
\newtheorem{proposition}[theorem]{Proposition}
\newtheorem{conjecture}[theorem]{Conjecture}
\DeclareMathOperator{\per}{per}
\DeclareMathOperator{\area}{area}
\title{Penny graphs in the hyperbolic plane}
\author{\'Ad\'am Sagmeister}
\address{Bolyai Institute, University of Szeged, Aradi vértanúk tere 1, H-6720 Szeged,
Hungary} \email{sagmeister.adam@gmail.com}
\thanks{\'Ad\'am Sagmeister is funded by the grant NKFIH 150151. Project no.\ 150151 has been implemented with the support provided by
the Ministry of Culture and Innovation of Hungary from the National
Research, Development and Innovation Fund, financed under the
ADVANCED\_24 funding scheme.}
\author{Konrad J. Swanepoel}
\address{Department of Mathematics, London School of Economics and Political Science, Houghton Street, London, WC2A 2AE, UK}
\email{k.swanepoel@lse.ac.uk}
\thanks{Konrad Swanepoel is partially supported by ERC grant no.\ 882971, ``GeoScape,'' and by the Erd\H os Center.}
\subjclass{Primary 52C15, Secondary 52C10, 51M09}
\keywords{hyperbolic geometry, circle packing, penny graph}
\begin{document}

\begin{abstract}
We consider the problem of finding the maximum number $e_d(n)$ of pairs of touching circles in a packing of $n$ congruent circles of diameter $d$ in the hyperbolic plane of curvature $-1$.
In the Euclidean plane, the maximum comes from a spiral construction of the tiling of the plane with equilateral triangles (Harborth 1974), with a similar result in the hyperbolic plane for the values of $d$ corresponding to the order-$k$ triangular tilings (Bowen 2000).
We present various upper and lower bounds for $e_d(n)$ for all values of $d > 0$.
In particular, we prove that if $d > 0.66114\dots$ except for $d=0.76217\dots$, then the number of touching pairs is less than the one coming from a spiral construction in the order-$7$ triangular tiling, which we conjecture to be extremal.
We also give a lower bound $e_d(n) > (2+\varepsilon_d)n$ where $\varepsilon_d > 1$ for all $d > 0$.
\end{abstract}

\maketitle

\section{Introduction}
A \emph{penny graph} is the incidence graph of a finite packing of congruent circles.
In other words, for each circle in the packing, there is a vertex in the penny graph, and we join two vertices if their corresponding circles touch.
Equivalently, such a graph is the \emph{minimum-distance graph} of a finite set of points with minimum distance $d$, and where two points are joined by an edge if their distance is $d$.
In the Euclidean plane, the combinatorics of penny graphs have been well studied, starting with Erd\H{os}'s seminal paper \cite{E46} and Harborth's determination of the maximum number of edges \cite{Harborth1974}, and with other contributions by Kupitz \cite{Kupitz94}, Pach and T\'oth \cite{Pach-Toth96}, T\'oth \cite{Toth97}, Csizmadia \cite{Csizmadia98}, Swanepoel \cite{S02}, Eppstein \cite{Eppstein18}, and Sagdeev \cite{Sagdeev25}.

Harborth showed that the maximum number of edges of a penny graph with $n$ vertices is $\lfloor 3n-\sqrt{12n-3}\rfloor$.
The analogous question can be considered in higher dimensional Euclidean space \cite{Bezdek12}, in normed spaces (see the references in the survey \cite{S-survey}) and for spaces of non-zero constant curvature, namely spherical and hyperbolic spaces.
Note that in the latter cases, the maximum number of edges will depend on the size of the spheres relative to the curvature.
For the $2$-dimensional sphere, Bezdek, Connelly and Kert\'esz \cite{BCK85} improved on the trivial upper bound of $5n/2$, and for the hyperbolic plane, Bowen \cite{B00} showed that a certain construction is optimal if the minimum distance $d$ is such that the plane can be tiled by equilateral triangles of side length $d$.

It is the purpose of this paper to investigate the maximum number of minimum distances for all values of $d > 0$.
Our main results are various upper and lower bounds for the maximum number of edges in a penny graph on $n$ vertices, depending on $d$, as detailed in the next section.
We also give a slightly simpler proof of Bowen's result that when the distance is that of the order-$k$ triangular tiling, the maximum number of edges is attained by a certain spiral construction.
Using the same ideas, we show that a spiral construction is also optimal for maximizing the number of edges in subgraphs of the $\{p,q\}$-tilings in the hyperbolic plane.

Finally, we also consider the simple problem of ``infinite diameter circles'', and find the tight maximum number of touching pairs in a packing of $n$ horocycles.

\section{Our results}
Given \(d > 0\) and \(n\in\mathbb{N}\), let \(e_d(n)\) denote the maximum number of touching pairs in a packing of circles of diameter \(d\) in the hyperbolic plane (of curvature $-1$).
Equivalently, this is the largest number of edges in a minimum-distance graph on \(n\) vertices with minimum distance \(d\) in the hyperbolic plane.
Since $e_d(n)$ is superadditive, it follows from Fekete's lemma that \[\lim_{n\to\infty} e_d(n)/n =: c(d)\] exists.
In fact, it is easy to see that $e_d(m+n)\geq e_d(m)+e_d(n)+2$ if $m\geq 2$ and $n\geq 1$.
This (or a simple construction) immediately gives $e_d(n)\geq 2n-3$ for all $d$.
Since all minimum-distance graphs are planar, we also have $e_d(n)\leq 3n-6$.
Thus, $2\leq c(d)\leq 3$.
In this paper we show that both inequalities are strict and find upper bounds and lower bounds of $c(d)$ for all values of $d$.
The bounds are illustrated in Figure~\ref{fig:summary} and described more precisely below, after introducing some notation.
\begin{figure}
\centering
\includegraphics[scale=1]{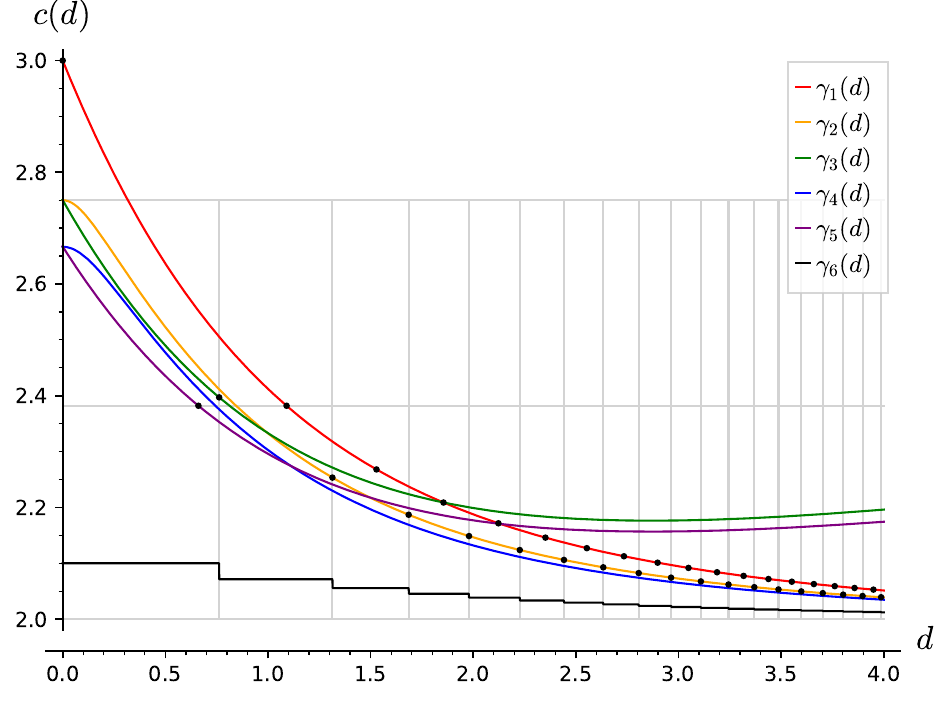}
\caption{Upper and lower bounds for $c(d)$. The red upper bound $\gamma_1(d)$ holds for all $d$ and is tight for the order-$k$ triangular tilings (the dots on the red graph).
The orange $\gamma_2(d)$ and green $\gamma_3(d)$ bounds hold for all $d\neq d(k)$.
The bound $\gamma_3(\overline{d}(6))=2.39698265738619\dots$ is the best we have for $d=\overline{d}(6)$ (see the dot on the green curve), and the bounds $\gamma_2(\overline{d}(k))$ are the best for $d=\overline{d}(k)$, $k\geq 7$ (see the dots on the orange curve).
The blue $\gamma_4(d)$ and purple $\gamma_5(d)$ upper bounds hold for all $d$ such that $d\neq d(k)$, $k\geq 7$ and $d\neq\overline{d}(k)$, $d\geq 6$.
These curves intersect at $d=1.1128036956703866\dots$.
Apart from the semiregular tiling $d=\overline{d}(6)$, we have that Conjecture ~\ref{conj3} holds for all $d > 0.6611380871710578\dots$, the solution of $\gamma_5(d)=3-1/\varphi$ (shown by the dot on the purple curve).
The vertical gray lines are at the values where $d=\overline{d}(k)$.
The horizontal gray lines are at $3-1/\varphi$ and $2.75$.
The black step function is the lower bound $\gamma_6(d)$.}
\label{fig:summary}
\end{figure}
For \(k\geq 7\), let \(d(k)\) denote the side length of an equilateral triangle with angles all equal to \(2\pi/k\).
These triangles are the prototiles of the order-\(k\) triangular tiling of the hyperbolic plane, where each vertex has degree \(k\).

For any distance \(d > 0\) in the hyperbolic plane of curvature $-1$, denote the angle of a regular triangle with side length \(d\) by \(\alpha=\alpha(d)\), and its area by \(A(d)=\pi-3\alpha\).
We can compute \(\alpha\) using the relation \(\cos\alpha = \dfrac{\tanh d/2}{\tanh d}\).
We also let $\alpha_m=\alpha_m(d)$ denote the angle of a regular $m$-gon with side length $d$.
Then $\sin\alpha_m/2 = \cos(\pi/k)/\cosh(d/2)$.
In particular, this gives another way to express the relation between $\alpha=\alpha_3$ and $d$, namely $\sin\alpha/2 = (2\cosh d/2)^{-1}$.
Also, the angle $\alpha_4$ of a square satisfies $\sin\alpha_4/2=\sqrt{2}\sin\alpha/2$.
For any $k\geq 7$, we let $d(k)$ be the value of $d$ such that $\alpha(d)=2\pi/k$, 
and for $k\geq 6$, we let $\overline{d}(k)$ be the value of $d$ such that $\alpha_4(d)+(k-1)\alpha(d)=2\pi$.
Thus, $d(k)$ is the edge length of an order-$k$ triangular tiling, and $\overline{d}(k)$ is the edge length of a semiregular tiling where there are $k-1$ equilateral triangles and a square around each vertex. 
We have $\overline{d}(6) < d(7) < \overline{d}(7) < d(8) < \dots$ (see Table~\ref{table:1}).
\begin{table}%[H]
    \centering
\begin{tabular}{ccccc}
$k$ & $\alpha(d(k))=\frac{2\pi}{k}$ & $d(k)$ & $\alpha(\overline{d}(k))$ & $\overline{d}(k)$\\ \hline
$6$ & --- & --- & $0.969004$ & $0.76217$ \\
$7$ & $0.897598$ & $1.09055$ & $0.841836$ & $1.31399$ \\
$8$ & $0.785398$ & $1.52857$ & $0.743463$ & $1.68530$ \\
$9$ & $0.698132$ & $1.85508$ & $0.665375$ & $1.97930$ \\
$10$ & $0.628319$ & $2.12255$ & $0.601989$ & $2.22672$ \\
$11$ & $0.571199$ & $2.35171$ & $0.549554$ & $2.44200$ \\
$12$ & $0.523599$ & $2.55337$ & $0.505480$ & $2.63338$ \\
$13$ & $0.483322$ & $2.73408$ & $0.467925$ & $2.80610$ \\
$14$ & $0.448799$ & $2.89815$ & $0.435550$ & $2.96375$ \\
$15$ & $0.418879$ & $3.04861$ & $0.407355$ & $3.10892$ \\
$16$ & $0.392699$ & $3.18771$ & $0.382582$ & $3.24357$ \\
$17$ & $0.369599$ & $3.31713$ & $0.360645$ & $3.36919$ \\
$18$ & $0.349066$ & $3.43821$ & $0.341084$ & $3.48698$ \\
$19$ & $0.330694$ & $3.55201$ & $0.323533$ & $3.59791$ \\
$20$ & $0.314159$ & $3.65939$ & $0.307699$ & $3.70274$ \\
$21$ & $0.299199$ & $3.76107$ & $0.293341$ & $3.80215$ \\
$22$ & $0.285599$ & $3.85763$ & $0.280263$ & $3.89669$ \\
$23$ & $0.273182$ & $3.94959$ & $0.268300$ & $3.98682$ \\
\end{tabular}
\caption{Numerical values of $d(k)$ and $\overline{d}(k)$ and their associated angles $\alpha$}
\label{table:1}
\end{table}
Our main result is the following.
\begin{theorem}\label{thm:main}
We have the following bounds for $c(d)$:
\begin{align*}
c(d) &\leq \begin{cases}
\gamma_1(d) := \displaystyle \frac{\pi}{\alpha}-\sqrt{\left(\frac{\pi}{\alpha}-1\right)\left(\frac{\pi}{\alpha}-3\right)} & \text{for all $d > 0$}\\[4mm] 
\gamma_2(d) := \displaystyle \frac{\pi}{\alpha}-\frac18-\sqrt{\left(\frac{\pi}{\alpha}\right)^2-\frac{17\pi}{4\alpha}+\frac{241}{64}} & \text{if $d=\overline{d}(k), k\geq 6$}\\[4mm]
\gamma_3(d) := \displaystyle 2+\frac{3(d-\pi+3\alpha)}{4(d+\pi-3\alpha)} & \text{if $d=\overline{d}(k), k\geq 6$}\\[4mm]
\gamma_4(d) := \displaystyle \frac{\pi}{\alpha} - \frac16 - \sqrt{\left(\frac{\pi}{\alpha}\right)^2-\frac{13\pi}{3\alpha} + \frac{145}{36}} & \text{for all $d > 0$ if $d\neq d(k),\overline{d}(k)$, $k\geq 6$}\label{eq:gamma4} \\[4mm]
\gamma_5(d) := \displaystyle \frac{8}{3} - \frac{4(\pi-3\alpha)}{3(d+\pi-3\alpha)} & \text{for all $d > 0$ if $d\neq d(k),\overline{d}(k)$, $k\geq 6$}
\end{cases}\\
\;c(d) &\;\geq \;\;\gamma_6(d) := \displaystyle 2 +\frac{1}{4\lfloor(2\pi-\alpha_4)/\alpha\rfloor - 6} \qquad\quad\;\;\;\text{for all $d > 0$}
\end{align*}
\end{theorem}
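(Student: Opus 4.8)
The plan is to prove the six inequalities essentially independently, all of them resting on two facts: a minimum‑distance graph is planar, and consecutive neighbours of a vertex subtend an angle at least $\alpha$ (so $\deg v\le 2\pi/\alpha$, and more refined angle bookkeeping is available at every vertex). For the upper bounds I would fix a minimum‑distance graph $G$ on $n$ vertices with $e$ edges that realises $c(d)$ asymptotically, and reduce to the case that $G$ is $2$‑connected (cutting at cut vertices costs only an additive constant, which disappears in the limit). Drawing $G$ with geodesic segments, every bounded face is a simple hyperbolic polygon whose sides all have length $d$ and — crucially — whose diagonals all have length $\ge d$, since all disk centres are at mutual distance $\ge d$. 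The skeleton is then: (i) the degree/angle identity $\frac{2\pi}{\alpha}n-2e=\sum_v(\frac{2\pi}{\alpha}-\deg v)\ge 0$; (ii) Euler's formula $n-e+F+1=2$ together with $\sum_{i\ge 3}iF_i=2e-b$, where $b$ is the length of the outer cycle and $F_i$ the number of bounded $i$‑gonal faces; and (iii) an area accounting via Gauss--Bonnet, $\operatorname{area}=\pi b-2\pi-\sum_{v\in\partial}\beta_v$ (with $\beta_v$ the interior angle at a boundary vertex), combined with a lower bound on the area of each face.

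The key geometric lemma to isolate and prove is a sharp lower bound for the area of a face: the equilateral triangle of side $d$ has area exactly $A(d)=\pi-3\alpha$, and I would show by a symmetrisation/continuity argument that any penny‑graph $i$‑gon (sides $d$, diagonals $\ge d$) has area at least $(i-2)A(d)$, the only extremal quadrilateral being the union of two such triangles; in particular a square of side $d$ is strictly larger, which is exactly why the threshold values $\overline d(k)$ — where the angle equation $\alpha_4+(k-1)\alpha=2\pi$ forces a square around every interior vertex — must be handled separately. Feeding (i)--(iii) and this area bound into a short convex optimisation over the admissible vectors $(e,b,F_3,F_4,\dots)$ produces $\gamma_1$ for all $d$; the square‑root shape of $\gamma_1$ (and of $\gamma_2,\gamma_4$) arises, as in Harborth's Euclidean argument, from a two‑parameter trade‑off between $b$ and $n$ governed by a quadratic, now reflecting the exponential growth of the $\{3,k\}$‑tiling, which forces $b=\Theta(n)$ rather than $\Theta(\sqrt n)$. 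The refined bounds $\gamma_4,\gamma_5$ (valid only when $d\ne d(k),\overline d(k)$) would follow by showing that away from the tiling distances a positive density of corners cannot lie in an equilateral triangle or a square, so a definite amount of extra area must be paid; and $\gamma_2,\gamma_3$ by rerunning the optimisation with the squares forced at $d=\overline d(k)$ (the two choices of which relation to tighten giving the two expressions).

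For the lower bound $c(d)\ge\gamma_6(d)$ I would give an explicit construction: let $m=\lfloor(2\pi-\alpha_4)/\alpha\rfloor$ be the largest number of equilateral triangles that fit around a vertex alongside one square, build a spiral patch of the corresponding semiregular‑type local configuration, and compute the limiting edge‑to‑vertex ratio, which works out to $2+\frac{1}{4m-6}$; by superadditivity and Fekete's lemma this is a valid lower bound for $c(d)$.

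The main obstacle is the sharp face‑area lemma together with the verification that the optimisation is genuinely controlled by the tiling‑like configurations — that is, ruling out that some exotic mixture of large faces beats the stated bounds. This amounts to pinning down the exact hyperbolic isoperimetric behaviour of penny‑graph regions (the sharp relation among $b$, $n$, the boundary angles $\beta_v$ and the face areas) that makes $\gamma_1$ tight at every $d(k)$; the bookkeeping near the semiregular thresholds $\overline d(k)$, where triangles and squares compete for angle and area, is the most delicate part.
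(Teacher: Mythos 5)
Your overall architecture (planarity and Euler's formula, angle counting at vertices, Gauss--Bonnet, an Oler-type area lower bound, and an explicit construction for the lower bound) matches the paper's, but two of your central mechanisms do not work as described. First, a one-shot ``convex optimisation over the admissible vectors $(e,b,F_3,F_4,\dots)$'' cannot produce $\gamma_1$. Combining the face-area bound $\area(B)\geq(2n-2-b)A(d)$ with Gauss--Bonnet and the angle inequality in a single pass only recovers $e\leq 3n-3-b$ together with a linear lower bound on $b$, which at $d=d(7)$ gives $c\leq 11/4=2.75$, well short of $\gamma_1(d(7))=2.382\dots$. The square root in $\gamma_1$ (the root of $\alpha x^2-2\pi x+4\pi-3\alpha$) is the fixed point of a recursion: one must peel off the boundary cycle, bound the number $r$ of removed edges by charging each to an angle of size at least $\alpha$ and comparing the Gauss--Bonnet angle sum with the Oler-type area bound, and then apply the \emph{inductive hypothesis} to the interior graph on $n-b$ vertices. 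You gesture at Harborth but never commit to this induction, and without it the bound degrades to a rational function of $\pi/\alpha$. Relatedly, your key face-area lemma (every $i$-face has area $\geq(i-2)A(d)$) is essentially B\"or\"oczky's hyperbolic extension of Oler's inequality; it is true but not provable by naive triangulation, since a hyperbolic triangle with all sides $\geq d$ can have area less than $A(d)$ (take sides $d,d,1.9d$). The paper cites this as a known theorem rather than reproving it.

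Second, several case distinctions are wired up incorrectly or left without their driving mechanism. The bounds $\gamma_2,\gamma_3$ are \emph{not} obtained by ``forcing squares at $d=\overline d(k)$'': they hold whenever $d\neq d(k)$, using only that every interior vertex then lies on a non-triangular face (giving $e\leq\frac{11}{4}n-\frac34 b-3$), and are merely the best available at $\overline d(k)$ because the stronger $\gamma_4,\gamma_5$ fail there. The improvement from $11/4$ to $8/3$ in $\gamma_4,\gamma_5$ is a discharging argument whose crux is combinatorial, not area-based: a rhombus face can absorb full charge from at most three of its four vertices unless its angles make it a square whose complement at each vertex is a multiple of $\alpha$, which is exactly what $d\neq\overline d(k)$ excludes. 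The explicit appearance of $d$ (rather than $\alpha$) in $\gamma_3,\gamma_5$ comes from the hyperbolic isoperimetric inequality $\per>\area$ applied to the boundary polygon, i.e.\ $bd>A\geq(2n-2-b)(\pi-3\alpha)$, which you mention only as an obstacle rather than deploying. Finally, for the lower bound, a ``spiral patch of the semiregular-type local configuration'' does not exist for generic $d$, because $(q-1)\alpha+\alpha_4\neq 2\pi$ and the local configuration does not close up into a tiling; the paper instead builds a strip of triangle fans plus a square and iterates reflections in ultraparallel lines, verifying explicitly that reflected points stay at distance $\geq d$. You arrive at the correct count $2+\frac1{4m-6}$, but the existence of the packing for all $d$ is the actual content of that proof and is missing from your plan.
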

The proofs of the various parts of this theorem are contained in Section~\ref{section:upper-bound} and \ref{section:lower-bound}.
Bowen proved \cite{B00} that a certain straightforward spiral construction that cuts out a finite part of any order-$k$ triangular tiling gives a minimum-distance graph on \(n\) vertices with the maximum number \(e_{d(k)}(n)\) of edges.
Rold\'an and Toal\'a-Enr\'{\i}quez \cite{RT22} gave an asymptotic formula for the number of edges on the boundary of certain related spiral graphs.
It follows from these results that $c(d)=\gamma_1(d)$ for all $d=d(k)$, $k\geq 7$, and that the order-\(7\) tiling is optimal among all order-\(k\) triangular tilings.
In particular, \(e_{d(7)}(n) = (4-\varphi)n + O(1)\), where \(\varphi=(1+\sqrt{5})/2\) is the golden ratio.
Our self-contained proof of the first inequality $c(d)\leq\gamma_1(d)$ of Theorem~\ref{thm:main} is given in Proposition~\ref{prop:inductive_bound} below, where we show the following more precise bound.
\begin{theorem}\label{thm:upper-bound}
For all $n\geq 1$,
\[e_{d(7)}(n) \leq \left\lfloor\frac{7}{2}n - \sqrt{\frac54n^2+15n-4}\right\rfloor.\]
\end{theorem}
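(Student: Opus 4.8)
The plan is to run a discharging/induction argument on the structure of a minimum-distance graph $G$ at distance $d=d(7)$, mimicking Harborth's Euclidean proof but with the hyperbolic angle bound $\alpha=\alpha(d(7))=2\pi/7$ in place of $\pi/3$. Since $G$ is planar (drawn with straight hyperbolic edges, which do not cross because edges have length exactly the minimum distance), Euler's formula gives $e\le 3n-6$ with equality forcing a triangulation. The key geometric fact is local: around any vertex $v$ of degree $\deg(v)$, the incident edges are separated by angles $\ge\alpha=2\pi/7$, so $\deg(v)\le 7$, and moreover if $\deg(v)=7$ then all seven triangles at $v$ are present and equilateral of side $d(7)$, so $v$ sits in a patch of the order-$7$ tiling. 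I would quantify the "defect" at each vertex: a vertex of degree $7$ in the interior is forced, but the total angle around interior vertices being $2\pi$ forces $7\cdot(2\pi/7)=2\pi$ with no slack, whereas a boundary vertex has angle sum $<\pi$ available beyond its two extreme edges in the convex-hull sense, limiting its degree more sharply.

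The main steps, in order: (1) Set $f$ = number of bounded faces, $b$ = number of edges on the outer boundary; by planarity $2e\ge 3f+b$ wait, more precisely $2e = \sum(\text{face degrees})\ge 3(f-1+\text{inner faces})$, and combining with Euler $n-e+f=2$ one gets $e\le 3n-6-(\text{something in }b)$, the classical inequality $e\le 3n-3-b$ for a $2$-connected planar graph with outer face of length $b$. (2) Bound $b$ from below in terms of $n$ using the hyperbolic angle constraint: the outer boundary is a polygon whose vertices have interior angle $\ge\alpha$ (each boundary vertex contributes at least one angle $\ge\alpha$ on the inside, and the exterior angles sum to $2\pi$ if the boundary is convex; for the general boundary cycle one uses that the sum of turning is $2\pi$ and each turn is at most $\pi-\alpha$), giving $b\ge \lceil 2\pi/(\pi-\alpha)\rceil$-type bounds — but for the quadratic-in-$n$ shape one instead needs the isoperimetric-type estimate that a planar penny graph on $n$ vertices has boundary length $b\ge c\sqrt{n}$. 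Concretely, relate the number of interior vertices $n_{\text{int}}$ to $n$ and $b$ via the fact that each interior vertex has degree $\le 7$ and is surrounded by triangles, so the patch is a sub-complex of the order-$7$ tiling near that vertex; then count that $n\le n_{\text{int}} + b$ and the area of the region is $\ge A(d(7))\cdot f = (\pi-3\alpha)f = (\pi-6\pi/7)f = (\pi/7)f$, while the boundary length $\le b\cdot d$ bounds the area from above via the hyperbolic isoperimetric inequality, or more elementarily via $f \le$ (number of triangles) and a direct combinatorial count. (3) Solve the resulting system: $e\le 3n-3-b$ and $b\ge$ (the positive root making $\frac54 n^2+15n-4$ appear under the radical). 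Plugging $\alpha=2\pi/7$ into $\gamma_1(d)=\frac{\pi}{\alpha}-\sqrt{(\frac{\pi}{\alpha}-1)(\frac{\pi}{\alpha}-3)} = \frac72-\sqrt{\frac52\cdot\frac12}=\frac72-\sqrt{5}/2$ matches the $\frac72 n$ linear term and the $\sqrt{\frac54 n^2}$ radical term, confirming the shape; the lower-order terms $15n-4$ come from the precise accounting of boundary edges and the $-3$ in $e\le 3n-3-b$. I would set $b = 3n - 3 - e$ and $t$ = number of triangular faces $= f-1$, use $2e \ge 3t + b$ and $n = t/... $ hmm — cleaner: use $n - e + f = 2$, $3f \le 2e + 3 - b$ rearranged to get the relation, then impose the angle-deficit inequality $b\,(\pi - \alpha) \ge 2\pi$ is too weak; the right inequality is the quadratic one coming from the area packing bound $\frac{\pi}{7} f \le \operatorname{area}(\text{hull}) $ and $\operatorname{area}(\text{hull}) \le$ something like $\pi-\alpha)(b) $ — actually the clean route is Harborth's: show $n \le 1 + b + \sum_{\text{inner}} \frac{\deg - ...}{...}$ and optimize, landing on $\sqrt{12n-3}$ in the Euclidean case, here on $\sqrt{\tfrac54 n^2 + 15n - 4}$ because the coefficient $12$ is replaced by $4(\tfrac{\pi}{\alpha}-1)(\tfrac{\pi}{\alpha}-3) = 4\cdot\tfrac52\cdot\tfrac12 = 5$ scaled appropriately.

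The hard part, I expect, will be step (2): getting the exact lower-order terms $15n$ and $-4$ inside the radical. The leading quadratic term $\frac54 n^2$ is forced by the area argument (unlike the Euclidean case where the boundary grows like $\sqrt{n}$, in the hyperbolic case a linear-in-$n$ fraction of the vertices must lie on the boundary, since area grows linearly in $n$ but so does perimeter), and that quadratic growth of $b$ is exactly what converts Harborth's $\sqrt{12n-3}$ into a term quadratic in $n$ under the root. Pinning down the constants requires a careful, tight double-counting of edges by type (interior–interior, interior–boundary, boundary–boundary) together with the observation that at a boundary vertex the available interior angle is strictly less than $2\pi$, so its degree is bounded by $\lfloor (\text{interior angle})/\alpha\rfloor + 1 \le \lfloor \pi/\alpha \rfloor + 1 = \lfloor 7/2\rfloor + 1 = 4$ for a "flat" boundary vertex, with the extreme (reflex-free) boundary polygon having angle sum $(b-2)\pi$ forcing $\sum_{\text{bdry}}\deg$ to be tightly controlled. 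I would isolate this as a lemma: for a minimum-distance graph at distance $d(7)$ on $n$ vertices with $b$ boundary edges, $e \le 3n - 3 - b$ and $b(b-1) \ge 5n^2/... $, then finish by algebra and take floors. If the precise constants resist a clean derivation, the fallback is to prove the bound by strong induction on $n$, removing a boundary vertex of minimum degree and checking the induction step reduces to verifying a one-variable inequality, which is the strategy the paper signals by citing "Proposition~\ref{prop:inductive_bound}."
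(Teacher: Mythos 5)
Your proposal correctly identifies the target: Theorem~\ref{thm:upper-bound} is the case $\alpha=2\pi/7$ (so $\pi/\alpha=7/2$) of the general inductive bound, and your arithmetic check that $\left(\frac{\pi}{\alpha}-1\right)\left(\frac{\pi}{\alpha}-3\right)=\frac54$, $6\left(\frac{\pi}{\alpha}-1\right)=15$ and $-\frac{2\pi}{\alpha}+3=-4$ reproduces the radicand is exactly how the paper derives the theorem from Proposition~\ref{prop:inductive_bound}. However, the argument you actually sketch for your step (2) --- lower-bounding the boundary length $b$ by comparing the area, at least $\frac{\pi}{7}$ per triangle, with the perimeter $bd$ via the hyperbolic isoperimetric inequality, and then substituting into $e\leq 3n-3-b$ --- cannot produce the stated constant. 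That route gives $b>\frac{2\pi/7}{d(7)+\pi/7}(n-1)\approx 0.583\,n$ and hence only $e\leq 2.417\,n+O(1)$, whereas the theorem requires the coefficient $\frac72-\frac{\sqrt5}{2}\approx 2.382$. This is precisely why the paper relegates the isoperimetric argument to the weaker bounds $\gamma_3$ and $\gamma_5$ and proves the $\gamma_1$ bound by a different mechanism.

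The missing engine is the hyperbolic Oler inequality (Lemma~\ref{lemma:area:ngons}, the special case of B\"or\"oczky's result): $\area(B)\geq(2n-2-b)(\pi-3\alpha)$ for the boundary polygon $B$. The paper's induction removes the \emph{entire} boundary cycle at once --- not a single minimum-degree boundary vertex as in your fallback; a single boundary vertex can have degree up to $7>\gamma_1(d(7))$ (your degree bound of $4$ assumes a non-reflex interior angle, which need not hold), so vertex-by-vertex deletion does not close. It then charges each of the $r$ deleted edges to a distinct angle of measure at least $\alpha$ at a boundary vertex, expresses the total boundary angle sum as $(b-2)\pi-\area(B)$ by Gauss--Bonnet, and uses the Oler bound to get $r\leq(2b-2n)\frac{\pi}{\alpha}+3(2n-2-b)$; applying the induction hypothesis to the $n-b$ interior vertices and handling large $b$ separately via $e\leq 3n-3-b$ is what produces the exact quadratic $\frac54n^2+15n-4$ under the radical. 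Your combinatorial identity $\sum_{i}(i-2)f_i=2n-b-2$ is the right precursor, but it must be converted into an area lower bound by Oler's inequality before Gauss--Bonnet can be brought to bear; without that step the lower-order terms you were unable to pin down cannot be recovered, and the leading constant comes out strictly too large.
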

\begin{conjecture}\label{conj1}
For all $n\geq 1$, $e_{d(7)}(n)$ equals the upper bound from Theorem~\ref{thm:upper-bound}.
%\[e_{d(7)}(n) = \left\lfloor\frac{7}{2}n - \frac12\sqrt{5(n+6)^2-14^2}\right\rfloor.\]
\end{conjecture}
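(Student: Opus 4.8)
The plan is to show that the upper bound of Theorem~\ref{thm:upper-bound} is attained by the spiral subgraph of the order-$7$ triangular tiling, in the spirit of Harborth's Euclidean construction \cite{Harborth1974} and of Bowen \cite{B00}. Since Theorem~\ref{thm:upper-bound} already provides the matching inequality $e_{d(7)}(n)\le\lfloor\frac72 n-\sqrt{\frac54 n^2+15n-4}\rfloor$, it suffices to produce, for each $n\ge 1$, one minimum-distance graph on $n$ vertices with minimum distance $d(7)$ having at least that many edges; in particular we do not need to reprove that the spiral is optimal. The basic tool is Euler's formula: if $G$ is a finite induced subgraph of the order-$7$ tiling that is a \emph{triangulated disk} (that is, $2$-connected with every bounded face a triangle of the tiling), then $e(G)=3n-3-b$, where $n=|V(G)|$ and $b$ is the number of edges of the outer boundary cycle (equivalently, the number of boundary vertices). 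Thus the task becomes the isoperimetric one of finding, for each $n$, a triangulated subdisk on $n$ vertices with $b$ as small as possible, and the spiral is the natural candidate.

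First I would fix a vertex $o$, write $V_r$ for the set of vertices at combinatorial distance $r$ from $o$, and set $a_r:=|V_r|$. A short local count around $o$ gives $a_0=1$, $a_1=7$, $a_2=21$, and then the $\{3,7\}$ recurrence $a_{r+1}=3a_r-a_{r-1}$ for $r\ge2$, whose solution is $a_r=7F_{2r}$ for $r\ge1$; hence the combinatorial ball $B_r$ of radius $r$ has $n_r:=1+7\sum_{j=1}^{r}F_{2j}=7F_{2r+1}-6$ vertices. Next one specifies a linear \emph{spiral order} of the vertices of the tiling: list $o$, then $V_1$ in cyclic order, then $V_2$ starting from a vertex adjacent to the last vertex of $V_1$, then $V_3$, and so on; let $S_n$ be the subgraph induced by the first $n$ vertices. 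An induction on $n$ should show that each $S_n$ is a triangulated disk --- the crucial point being that, in the spiral order, the already-placed neighbours of every new vertex form a contiguous arc of the current boundary cycle, so that attaching the new vertex together with the triangles it spans again yields a disk and encloses no hole; this is where the specific structure of the $\{3,7\}$ tiling and the alignment of consecutive rings are used. The same induction records how the boundary length $b(S_n)$ changes: if the new arc has $\ell$ edges then $b$ changes by $2-\ell$ and the edge count by $1+\ell$, and the sequence of arc lengths is eventually periodic within each ring. (That each $S_n$ is genuinely realized by congruent circles of diameter $d(7)$ is the standard fact that the vertices of the order-$7$ tiling carry such a packing with the tiling as contact graph.)

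It then remains to verify $e(S_n)=\lfloor\frac72 n-\sqrt{\frac54 n^2+15n-4}\rfloor$ for all $n$ (the cases $n\le2$ being immediate), which I would split into two stages. At the ``round'' values $n=n_r$, the graph $S_{n_r}$ is the ball $B_r$, whose boundary cycle is $V_r$, so $b=a_r$ and $e(S_{n_r})=3n_r-3-a_r$. Plugging $n_r=7F_{2r+1}-6$ and $a_r=7F_{2r}$ into the algebraic identity $\frac54 n_r^2+15n_r-4=(\frac12 n_r+3+a_r)^2$, one sees it reduces to the classical Fibonacci identity $F_{2r}F_{2r+2}=F_{2r+1}^2-1$; hence $\frac72 n_r-\sqrt{\frac54 n_r^2+15n_r-4}=3n_r-3-a_r$ exactly, with no floor needed. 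For the intermediate values $n_{r-1}<n<n_r$, the function $n\mapsto 3n-3-b(S_n)$ is piecewise linear while $g(n):=\frac72 n-\sqrt{\frac54 n^2+15n-4}$ is concave; one has to match the two and, in particular, get the floors right near the ring transitions.

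I expect this last matching step to be the main obstacle: turning the eventually-periodic $\pm1$ behaviour of the spiral boundary into the exact value $\lfloor g(n)\rfloor$ for every $n$ will likely require either a cleaner closed-form expression for $g(n)$ in terms of $(a_r)$ --- possibly via the asymptotic boundary edge-count of spiral graphs due to Rold\'an and Toal\'a-Enr\'{\i}quez \cite{RT22} --- or a somewhat lengthy case analysis on the position of $n$ within its ring. A secondary, more routine obstacle is specifying the spiral precisely enough that the contiguity of the back-neighbour arc (and hence the disk property and the boundary-update rule) can be verified without gaps; this is the hyperbolic counterpart of the bookkeeping Harborth carried out in the Euclidean triangular lattice.
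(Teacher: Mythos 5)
The statement you are trying to prove is stated in the paper as a \emph{conjecture}, and the paper does not prove it; it only supplies partial evidence (Proposition~\ref{prop:small-values}): the identity is verified computationally for $n\leq 366536$ and exactly at the ``complete layer'' values $n=7F_{2k+1}-6$. Your proposal correctly identifies the right reduction --- by Theorem~\ref{thm:upper-bound} only the lower bound is needed, and by Bowen's theorem (Theorem~\ref{thm:bowen}) the extremal graph is the spiral, so the conjecture is equivalent to the purely combinatorial claim $e(S_n)=\lfloor\frac72 n-\sqrt{\frac54 n^2+15n-4}\rfloor$ for all $n$. Your verification at the round values $n_r=7F_{2r+1}-6$ (via $b=a_r=7F_{2r}$ and the Fibonacci identity $F_{2r}F_{2r+2}=F_{2r+1}^2-1$) is correct and is essentially the same computation as in the paper's proof of Proposition~\ref{prop:small-values}.

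The genuine gap is exactly the step you flag yourself: the matching for intermediate $n_{r-1}<n<n_r$. This is not a routine bookkeeping exercise that can be deferred --- it is the open content of the conjecture. The paper makes the difficulty explicit by encoding the increments $\Delta e_{d(7)}(n)$ as an aperiodic infinite word over $\{0,1,2,3\}$ generated by a substitution-like rule (Proposition~\ref{prop:recursion}); the boundary-arc lengths of the spiral are \emph{not} ``eventually periodic within each ring'' in any way that obviously synchronizes with the floor of the concave function $g(n)=\frac72 n-\sqrt{\frac54 n^2+15n-4}$, and no one has carried out the case analysis you propose. In particular, the claim that the piecewise-linear edge count and $\lfloor g(n)\rfloor$ agree at every $n$ would require controlling the fractional part of $g(n)$ against the combinatorics of this infinite word, which is precisely why the authors only report a finite computational check. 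As written, your argument establishes the conjecture only for $n\leq$ whatever range one is willing to compute and for $n=7F_{2k+1}-6$, i.e.\ it reproves Proposition~\ref{prop:small-values} but does not prove Conjecture~\ref{conj1}.
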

As further limited evidence for this conjecture, we have the following observation, proved in the next section.
\begin{proposition}\label{prop:small-values}
Conjecture~\ref{conj1} holds
%\[e_{d(7)}(n) = \left\lfloor\frac{7}{2}n - \sqrt{\frac54n^2+15n-4}\right\rfloor\]
for all $n\leq 366536$ and for all $n=7F_{2k+1}-6$, $k\geq 0$, where $F_i$ is the $i$th Fibonacci number.
\end{proposition}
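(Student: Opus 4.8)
The plan is to verify Conjecture~\ref{conj1} in two ways: an exact check for small $n$, and an exact evaluation of both sides for the special sequence $n = 7F_{2k+1}-6$. For the first part, I would exhibit explicit minimum-distance graphs achieving the bound of Theorem~\ref{thm:upper-bound}, using the spiral construction of Bowen applied to the order-$7$ triangular tiling: peel off the tiling vertex by vertex along a combinatorial spiral, recording for each $n$ the number of edges $f(n)$ of the resulting induced subgraph. Since the spiral is a deterministic process, $f(n)$ satisfies a simple recursion (each new vertex contributes either $2$ or $3$ new edges depending on whether it ``closes a gap'' in the spiral front), so $f(n)$ can be computed incrementally. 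The task is then to confirm that $f(n)$ equals $\bigl\lfloor \tfrac72 n - \sqrt{\tfrac54 n^2 + 15n - 4}\,\bigr\rfloor$ for every $n \le 366536$; this is a finite computation, and I would note that it was carried out by computer, reporting the range over which equality was confirmed.

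For the second, infinite family, the point is that the spiral construction becomes especially clean when it completes a ``shell'': after adding the $k$-th ring around a central triangle (or central vertex) of the order-$7$ tiling, the vertex count is a linear-recurrence sequence in $k$ governed by the golden ratio, and one recognizes $n = 7F_{2k+1}-6$ as exactly these completion values. At such $n$ the spiral graph is the full combinatorial ball $B_k$ of radius $k$, whose vertex and edge counts I would compute in closed form from the recursion $v_{k+1} = v_k + (\text{boundary growth})$, $e_{k+1} = e_k + (\text{interior} + \text{boundary edges added})$; using $F_{2k+1} = F_k^2 + F_{k+1}^2$ and the Fibonacci identities, both $v_k$ and $e_k$ become explicit quadratics in $F_k, F_{k+1}$. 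Substituting into $\tfrac72 n - \sqrt{\tfrac54 n^2 + 15n - 4}$, the discriminant should simplify to a perfect square (another Fibonacci identity, e.g.\ $5F_k^2F_{k+1}^2 + \dots$ collapsing), so the floor is attained exactly and matches $e_k$.

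The main obstacle is the bookkeeping in the spiral: one must pin down precisely when a newly added vertex contributes $2$ versus $3$ edges, which depends on the local structure of the spiral front in the order-$7$ tiling, and then translate the completion-of-shell indices into the arithmetic statement $n = 7F_{2k+1}-6$. Getting the indexing of the Fibonacci numbers and the off-by-constant terms (the ``$-6$'' and the ``$-4$'' under the root) exactly right is where the argument could go wrong; I would double-check by confirming that the closed-form formula for the shell values $7F_{2k+1}-6$ — namely $1, 8, 50, 337, 2309, \dots$ — lies inside the verified range for the first several terms, giving a consistency check between the two parts of the proof. Everything else is either the finite computation or routine manipulation of Fibonacci identities.
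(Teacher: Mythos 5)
Your proposal follows essentially the same route as the paper: a computer verification of the spiral's edge counts against the closed formula for all $n\le 366536$ (the paper makes your ``each new vertex adds $2$ or $3$ edges'' recursion precise as a substitution word, Proposition~\ref{prop:recursion}), together with an exact closed-form evaluation at the complete shells $n=7F_{2k+1}-6$, where the quantity under the square root becomes $\tfrac14\bigl(5(n+6)^2-14^2\bigr)=\tfrac{49}{4}\bigl(5F_{2k+1}^2-4\bigr)=\tfrac{49}{4}L_{2k+1}^2$, a perfect square, so the formula is an integer attained by the full ball (whose optimality is Bowen's theorem). One small correction: the shell sizes $7F_{2k+1}-6$ are $1,8,29,85,232,\dots$, not $1,8,50,337,2309,\dots$, so your proposed consistency check would fail as written, though this arithmetic slip does not affect the structure of the argument.
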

We also state the following related conjectures.
\begin{conjecture}\label{conj2}
For a given \(n\), among all \(d > 0\), a spiral with \(n\) vertices of the order-\(7\) triangular tiling has the maximum number of edges. In other words, \(\max_{d > 0} e_d(n) = e_{d(7)}(n)\).
\end{conjecture}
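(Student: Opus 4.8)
A natural route to Conjecture~\ref{conj2} is to reduce it, via the upper bounds of Theorem~\ref{thm:main}, to the small-distance regime, which I expect to be the genuine difficulty. Fix $n$. Taking $d=d(7)$ gives $\max_{d>0}e_d(n)\ge e_{d(7)}(n)$, so it suffices to prove $e_d(n)\le e_{d(7)}(n)$ for every $d\ne d(7)$. For $d>0.6611\ldots$ with $d\ne\overline d(6)$, the paper already establishes Conjecture~\ref{conj3}, so $c(d)<4-\varphi=c(d(7))$; when moreover $d=d(k)$ for some $k\ge 8$ one has the sharper equality $c(d)=\gamma_1(d)$ (Bowen's theorem together with the boundary count of Rold\'an and Toal\'a-Enr\'{\i}quez), with $\gamma_1$ strictly decreasing as a function of $\pi/\alpha=k/2$. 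As $d\to\infty$ one has $\gamma_1(d)\to 2$, so there is no difficulty at the far end of the range either. Thus, outside the exceptional set $\{d\le 0.6611\ldots\}\cup\{\overline d(6)\}$, the asymptotic inequality $c(d)<c(d(7))$ is already known, with a positive gap in the leading coefficient.

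Because Conjecture~\ref{conj2} concerns each individual $n$, not merely the limit $c(d)$, the next step is to upgrade the relevant bound at each such $d$ to an exact bound valid for all $n$. The plan is to re-run the inductive argument that proves Theorem~\ref{thm:main}, retaining the lower-order terms, to obtain for each $d$ an estimate of the same shape as Theorem~\ref{thm:upper-bound} --- say $e_d(n)\le\lfloor a(d)\,n-\sqrt{b(d)\,n^{2}+p(d)\,n+q(d)}\rfloor$ --- with leading coefficient $a(d)<4-\varphi$; at the tiling distances $d=d(k)$ one may instead use the explicit edge counts of the order-$k$ spiral. One then verifies $e_d(n)\le e_{d(7)}(n)$ directly: for large $n$ this follows from the strict slope inequality together with $e_{d(7)}(n)=(4-\varphi)n+O(1)$, and for small $n$ it becomes a finite check in the spirit of Proposition~\ref{prop:small-values}, using that a minimum-distance graph on few vertices is highly constrained. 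To make the range of small $n$ uniform, one treats $d\to\infty$ separately (where $\gamma_1(d)\to 2$ and the comparison is immediate) and $d$ over a compact interval by continuity of the slope $a(d)$, together with the finitely many exceptional values of $d(k)$ and $\overline d(k)$ that lie in that interval.

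What remains is the genuinely open part: the single semiregular value $d=\overline d(6)$ and the interval $0<d\le 0.6611\ldots$. The value $\overline d(6)$ is missed only narrowly --- the best current bound is $\gamma_3(\overline d(6))=2.39698\ldots$, just above $4-\varphi=2.38197\ldots$ --- so there I would attempt a sharper local analysis at a vertex of the $(3^{5},4)$-configuration, assigning the square face and the near-boundary vertices an improved weight in the discharging, with the aim of pushing the slope below $4-\varphi$; this looks laborious but feasible. The real obstacle is small $d$. There $\gamma_1(d)\to 3$ and $\gamma_4(d),\gamma_5(d)\to 8/3$, all strictly above $4-\varphi$, while the lower bound $\gamma_6(d)\to 21/10$; thus Conjecture~\ref{conj2} asserts something strictly stronger than anything proved here, namely that in an optimal configuration a constant fraction of the vertices --- roughly $1/\varphi\approx 0.618$, the same as in the order-$7$ spiral --- must lie on the boundary \emph{uniformly as $d\to 0$}. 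A pure isoperimetric estimate only forces that fraction to be $\gtrsim\tfrac{\sqrt3}{2}\,d$, which tends to $0$, so a genuinely new device is needed. I would try a combinatorial Gauss--Bonnet / discharging argument that charges the impossibility of surrounding an interior vertex by $d$-triangles alone --- their angles $\alpha<\pi/3$ cannot sum to $2\pi$ around a vertex of any degree --- against the non-triangular faces (equivalently, the missing edges) that must therefore appear, and that shows these accumulate at a rate bounded away from $0$ independently of $d$, even though the area $A(d)$ of a single triangle tends to $0$. Producing such a $d$-independent rate --- necessarily a global phenomenon, since the per-vertex defect vanishes --- is where I expect the plan to stall, and it is why the statement is offered only as a conjecture.
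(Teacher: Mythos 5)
This statement is Conjecture~\ref{conj2}: the paper offers no proof of it, so there is nothing to compare your attempt against, and any complete argument here would be a new result rather than a reconstruction. Your write-up is candid that it is a programme, not a proof, and your survey of what is known is essentially accurate: the theorem following Conjecture~\ref{conj3} gives $c(d)<c(d(7))$ for $d>0.6611\ldots$ except $d=\overline d(6)$, the bound $\gamma_1$ is indeed decreasing in $\pi/\alpha$ so the order-$7$ tiling beats all higher-order triangular tilings asymptotically, and the genuine obstruction is the regime $d\to 0$, where every upper bound in Theorem~\ref{thm:main} tends to at least $8/3>4-\varphi$ while the isoperimetric control of the boundary fraction degenerates like $\tfrac{\sqrt3}{2}d$. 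That last diagnosis is exactly right and is the reason the statement remains open; no discharging or Gauss--Bonnet device currently known forces a $d$-independent density of non-triangular faces.

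Two further gaps are worth naming beyond the one you flag. First, even on the range where Conjecture~\ref{conj3} is proved, the conjecture at hand is a statement about every individual $n$, and the inequality $c(d)<c(d(7))$ only yields $e_d(n)\le e_{d(7)}(n)$ for $n$ beyond a threshold $n_0(d)$ determined by the slope gap and the additive constants; since $\gamma_5(d)\to 3-1/\varphi=c(d(7))$ as $d$ decreases to $0.6611\ldots$, that threshold blows up at the endpoint of your ``known'' interval, so the proposed ``finite check for small $n$'' is not uniform there and a compactness-plus-continuity argument does not close it. Second, the exceptional values $d(k)$ and $\overline d(k)$ are infinite in number and accumulate only at infinity, so your case analysis must be organised per compact interval with a separate argument as $d\to\infty$; this is doable (all the bounds tend to $2$ there) but is not yet an argument. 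In short: the proposal correctly locates the difficulty and is honest about not resolving it, which is consistent with the paper presenting the statement as a conjecture.
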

\begin{conjecture}\label{conj3}
For all $d > 0$, if $d\neq d(7)$, then $c(d) < c(d(7))$.
\end{conjecture}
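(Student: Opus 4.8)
The plan is to extract as much as possible from the upper bounds of Theorem~\ref{thm:main}, reducing the conjecture to a handful of one–variable estimates, and then to isolate the values of $d$ where genuinely new input is needed. Recall that by Bowen's theorem \cite{B00} together with \cite{RT22} we have $c(d(7))=\gamma_1(d(7))$, and since $\alpha(d(7))=2\pi/7$ this equals $\tfrac{7-\sqrt{5}}{2}=4-\varphi=3-\tfrac{1}{\varphi}$; thus the task is to prove $c(d)<4-\varphi$ for every $d\neq d(7)$.

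It is convenient to use the parameter $t=t(d):=\pi/\alpha(d)$, which increases strictly and continuously from $3$ to $\infty$ as $d$ ranges over $(0,\infty)$. Each of $\gamma_1,\gamma_2,\gamma_4$, viewed as a function of $t$, has the shape $t-c-\sqrt{t^2-bt+a}$ with $(b/2)^2>a$, so that $\sqrt{t^2-bt+a}<t-b/2$ on the relevant range and the $t$-derivative is negative; hence $\gamma_1,\gamma_2,\gamma_4$ are strictly decreasing in $d$. First I would dispose of the special tiling distances. For $d=d(k)$ with $k\geq 8$ Bowen's theorem again gives $c(d(k))=\gamma_1(d(k))$, and since $t(d(k))=k/2>7/2=t(d(7))$, monotonicity of $\gamma_1$ yields $c(d(k))<\gamma_1(d(7))=4-\varphi$. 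For $d=\overline d(k)$ with $k\geq 7$, Theorem~\ref{thm:main} gives $c(\overline d(k))\leq\gamma_2(\overline d(k))$; since $t(\overline d(k))$ increases with $k$ and $\gamma_2$ is decreasing, the claim reduces to the single numerical check $\gamma_2(\overline d(7))=2.2533\dots<2.3819\dots$.

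For all remaining $d$, that is $d\neq d(k),\overline d(k)$ for $k\geq 6$, Theorem~\ref{thm:main} gives $c(d)\leq\min\{\gamma_4(d),\gamma_5(d)\}$, so it suffices to prove the one–variable inequality $\min\{\gamma_4(d),\gamma_5(d)\}<4-\varphi$. Write $A(d)=\pi-3\alpha(d)\in(0,\pi)$ for the area of the side-$d$ equilateral triangle, so that $\gamma_5(d)=\tfrac{8}{3}-\tfrac{4A(d)}{3(d+A(d))}$; then $\gamma_5(d)<4-\varphi$ is equivalent to $d/A(d)$ lying below an explicit threshold. Since $A(d)\sim\tfrac{\sqrt{3}}{4}d^2$ as $d\to0$ while $A(d)\to\pi$ with $A'(d)\to0$ as $d\to\infty$, the quotient $d/A(d)$ decreases near $0$ and increases near $\infty$; the one genuinely analytic ingredient is to show that $d\mapsto d/A(d)$ is unimodal, i.e.\ that $A-dA'$ has exactly one sign change, which I expect to follow from $\cos\alpha=\tanh(d/2)/\tanh d$ after a routine computation. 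Granting unimodality, $\gamma_5$ is decreasing and then increasing; since $\gamma_5(d_0)=4-\varphi$ at the point $d_0=0.66113\dots$ defined by $\gamma_5(d_0)=3-1/\varphi$, and $\gamma_5(d_*)=\gamma_4(d_*)<4-\varphi$ at the point $d_*=1.11280\dots$ where the curves $\gamma_4$ and $\gamma_5$ cross, it follows that $\gamma_5(d)\leq 4-\varphi$ on $[d_0,d_*]$ with equality only at $d_0$; and for $d\geq d_*$ monotonicity of $\gamma_4$ gives $\gamma_4(d)\leq\gamma_4(d_*)<4-\varphi$. Combining the three cases gives $c(d)<c(d(7))$ for every $d>d_0$ with $d\neq d(7),\overline d(6)$.

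Two parts of the conjecture are not reached, and the first is the main obstacle. For $0<d\leq d_0$ every upper bound we have exceeds $4-\varphi$: indeed $\gamma_1(d)\to3$ and $\gamma_4(d),\gamma_5(d)\to\tfrac{8}{3}$ as $d\to0$, while the lower bound $\gamma_6$ never exceeds $2.1$, so a priori $c(d)$ could sit anywhere in $(2.1,\tfrac{8}{3})$; closing this range seems to require a genuinely new upper bound for $c(d)$ that captures the global isoperimetric deficit of a hyperbolic packing even when $d$ is tiny, i.e.\ a sharper discharging or boundary-counting scheme than those behind $\gamma_1,\dots,\gamma_5$, and I regard this as the crux of the whole conjecture. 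Secondly, at the single semiregular value $d=\overline d(6)=0.76217\dots$ our best bound is $\gamma_3(\overline d(6))=2.39698\dots$, only about $0.015$ above $4-\varphi$, and one would hope to recover the last bit by a refined analysis of the boundary structure of an extremal packing at exactly this distance, where the relevant vertex figure has five equilateral triangles and one square. These are precisely the two gaps flagged before Conjecture~\ref{conj3}: the conjecture is established for all $d>d_0$ apart from $d=\overline d(6)$.
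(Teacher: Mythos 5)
This statement is a conjecture that the paper leaves open: the paper only records (essentially without proof) that it follows from Theorem~\ref{thm:main} for all $d > 0.6611380871710578\dots$ except possibly $d=\overline{d}(6)$, and your proposal establishes exactly that same partial result by exactly the route the paper intends --- monotonicity of $\gamma_1,\gamma_2,\gamma_4$ in $\pi/\alpha$, the crossing of $\gamma_4$ and $\gamma_5$ at $d\approx 1.1128$, and the threshold $\gamma_5(d_0)=3-1/\varphi$. You also correctly identify the two genuinely open gaps (all $0<d\leq d_0$, where every available upper bound exceeds $4-\varphi$, and the single value $d=\overline{d}(6)$), so your account matches the paper's state of knowledge.
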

It follows from Theorem~\ref{thm:main} that this conjecture holds for all $d$ that is not too small.
\begin{theorem}
    $c(d) < c(d(7))$ for all $d > 0.6611380871710578\dots$ except possibly $d=0.762173254820934\dots$.
\end{theorem}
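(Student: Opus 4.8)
The plan is to combine the upper bounds $\gamma_1,\dots,\gamma_5$ from Theorem~\ref{thm:main} with the known value $c(d(7))=\gamma_1(d(7))=4-\varphi=3-1/\varphi$, and verify that on each part of the range $d>0.6611\dots$ the relevant bound drops below $3-1/\varphi$. First I would record the target constant: since $c(d(7))=\gamma_1(d(7))$ and $\alpha(d(7))=2\pi/7$, we get $c(d(7))=7/2-\sqrt{(7/2-1)(7/2-3)}=7/2-\sqrt{5}/2=(7-\sqrt5)/2=3-1/\varphi$. So the whole statement reduces to showing $c(d)<3-1/\varphi$ on the stated set.

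Next I would split the range $d>0.6611\dots$ according to which bound applies. For $d\neq d(k),\overline{d}(k)$ with $k\geq 6$, Theorem~\ref{thm:main} gives $c(d)\le\min(\gamma_4(d),\gamma_5(d))$; the curves $\gamma_4,\gamma_5$ intersect at $d^\ast=1.1128\dots$, with $\gamma_5$ the better (smaller) bound for $d<d^\ast$ and $\gamma_4$ the better one for $d>d^\ast$ (as noted in the caption of Figure~\ref{fig:summary}). I would check that $\gamma_5$ is strictly decreasing in $d$ wherever $\alpha$ is strictly decreasing in $d$ — which it is, since $\cos\alpha=\tanh(d/2)/\tanh d$ is increasing in $d$, so $\alpha$ decreases and $A(d)=\pi-3\alpha$ increases, making $\gamma_5(d)=8/3-\tfrac{4}{3}\cdot\tfrac{A(d)}{d+A(d)}$ a composition whose monotonicity I would pin down; combined with continuity this reduces the inequality $\gamma_5(d)<3-1/\varphi$ on $(0.6611\dots,d^\ast)$ to checking it at the single left endpoint $d=0.6611380871710578\dots$, which is by definition the solution of $\gamma_5(d)=3-1/\varphi$. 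For $d\ge d^\ast$ I would instead use $\gamma_4$, writing $t=\pi/\alpha$ and showing $\gamma_4=t-\tfrac16-\sqrt{t^2-\tfrac{13}{3}t+\tfrac{145}{36}}$ is increasing in $t$ (routine: the derivative in $t$ is positive for $t>3$, which holds since $\alpha<\pi/3$), hence decreasing in $d$; then $\gamma_4(d)\le\gamma_4(d^\ast)=\gamma_5(d^\ast)<3-1/\varphi$, the last inequality because $\gamma_5$ has already decreased past its threshold well before $d^\ast$. This disposes of every $d>0.6611\dots$ except the exceptional tiling values $d(k)$ ($k\ge7$) and $\overline d(k)$ ($k\ge6$).

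For $d=d(k)$ with $k\ge 8$, we have exactly $c(d)=\gamma_1(d)$ with $\alpha=2\pi/k$, i.e.\ $c(d(k))=k/2-\sqrt{(k/2-1)(k/2-3)}$, and I would check this expression is strictly decreasing in $k$ for $k\ge7$ (same computation as for $\gamma_4$ in the variable $t=k/2$), so $c(d(k))<c(d(7))$ for all $k\ge8$; the value $d=d(7)$ is the one explicitly excluded. For $d=\overline d(k)$ with $k\ge 7$, Theorem~\ref{thm:main} gives $c(d)\le\min(\gamma_2(d),\gamma_3(d))$, and since $\overline d(7)=1.31399\dots>d^\ast$ lies in the decreasing regime, I would show $\gamma_2$ (again monotone in $t=\pi/\alpha>3$) satisfies $\gamma_2(\overline d(k))\le\gamma_2(\overline d(7))$, and then verify the single numerical inequality $\gamma_2(\overline d(7))<3-1/\varphi$ using $\alpha(\overline d(7))=0.841836\dots$ from Table~\ref{table:1}. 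The only remaining value is $d=\overline d(6)=0.762173254820934\dots$, which is exactly the exception in the statement — there $\alpha(\overline d(6))=0.969004\dots$ and the best bound available is $\gamma_3(\overline d(6))=2.39698\dots$, which is already $<3-1/\varphi=2.381966\dots$? No: $2.39698>2.381966$, so indeed this value genuinely cannot be handled, confirming it must be listed as an exception.

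The main obstacle is organizational rather than deep: one must correctly identify, for each sub-interval and each exceptional point, which of the five upper bounds is smallest, and establish the monotonicity of each $\gamma_i$ as a function of $d$ (equivalently of $\alpha$, or of $t=\pi/\alpha$), so that the infinitely many inequalities collapse to a finite list of endpoint checks. The monotonicity of $\alpha$ in $d$ (from $\cos\alpha=\tanh(d/2)/\tanh d$) is the one analytic ingredient; everything else is elementary calculus on the explicit algebraic expressions for $\gamma_1,\dots,\gamma_5$ plus a handful of numerical evaluations at $d=0.6611\dots$, $d=d^\ast=1.1128\dots$, and $d=\overline d(7)=1.31399\dots$, all already tabulated or implied by the figure caption.
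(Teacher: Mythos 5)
Your overall strategy is exactly the paper's (the paper gives no explicit proof of this theorem beyond ``it follows from Theorem~\ref{thm:main}''): reduce to $c(d)<3-1/\varphi=(7-\sqrt5)/2$, use $\gamma_5$ on $(0.6611\dots,d^\ast)$ and $\gamma_4$ beyond $d^\ast$ for generic $d$, handle $d(k)$ with the monotonicity of $\gamma_1$ in $k$, handle $\overline d(k)$, $k\geq 7$, with $\gamma_2$, and observe that at $\overline d(6)$ every available bound ($\gamma_1,\gamma_2,\gamma_3$) exceeds $3-1/\varphi$, forcing the exception. Two steps need repair, though neither is fatal. First, your monotonicity claim for $\gamma_5$ is false as stated: since $A(d)=\pi-3\alpha$ behaves like $\tfrac{\sqrt3}{4}d^2$ near $0$ and tends to $\pi$ at infinity, the ratio $A/(d+A)$ tends to $0$ at both ends, so $\gamma_5\to 8/3$ as $d\to 0$ \emph{and} as $d\to\infty$ (note the paper lists $\lim_{d\to\infty}\gamma_i(d)=2$ only for $i=1,2,4,6$). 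Hence $\gamma_5$ is not globally decreasing, and the fact that $\alpha$ decreases in $d$ does not by itself ``pin down'' the monotonicity of $A/(d+A)$, since numerator and denominator both increase; you must verify separately (e.g.\ by a derivative or interval computation) that $\gamma_5<3-1/\varphi$ on all of $(0.6611\dots,1.1128\dots)$, or equivalently that $0.6611\dots$ is the only root of $\gamma_5=3-1/\varphi$ below $d^\ast$. Second, your claim that $\gamma_4$ is ``increasing in $t$\dots hence decreasing in $d$'' is internally inconsistent ($t=\pi/\alpha$ increases with $d$); in fact the derivative computation gives $\frac{d\gamma_4}{dt}>0$ iff $145/9>169/9$, which is false, so $\gamma_4$ is decreasing in $t$ and therefore in $d$ — your conclusion stands but for the opposite reason. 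Finally, note that $d(7)=1.0905\dots$ itself lies in the range $d>0.6611\dots$ and must of course also be excepted (as in Conjecture~\ref{conj3}); your reading of it as ``explicitly excluded'' is the intended one.
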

It follows from Theorem~\ref{thm:main} that we have a general upper bound $c(d) < 8/3 = \lim_{d\to 0}\gamma_4(d)$, valid for all $d > 0$.

It is somewhat surprising that the trivial lower bound $e_d(n)\geq 2n-3$ can be improved for all $d > 0$.
An equivalent definition of the lower bound $\gamma_6(d)$ is the following:
\[ \gamma_6(d) = 2 + \frac{1}{10}\quad\text{if $0 < d < \overline{d}(6)$,}\]
and for $q\geq 7$,
\[ \gamma_6(d) = 2 + \frac{1}{4q-14}\quad\text{if $\overline{d}(q-1) \leq d < \overline{d}(q)$.}\]
Thus we have that $c(d) > 2$ for all $d > 0$.
Also, since $\lim_{d\to\infty} \gamma_i(d)=2$ for all $i=1,2,4,6$, it follows that $\lim_{d\to\infty}c(d)=2$.

We may ask what happens if we consider circle packings of ``circles of infinite radius'' in the hyperbolic plane.
If we consider a variable point $c$ on a ray $\overrightarrow{ab}$, then the circle with centre $c$ and radius $ac$ converges to a limit curve as the distance $ac\to\infty$ which is called a \emph{horocycle}.
\begin{proposition}\label{prop:horocycles}
In a packing of $n$ horocycles in the hyperbolic plane, the number of touching pairs of horocycles is at most $2n-3$, and this bound is tight.
\end{proposition}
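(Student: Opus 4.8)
The plan is to prove the upper bound by showing that the contact graph of any horocycle packing is \emph{outerplanar}, and then to prove tightness by exhibiting an explicit packing whose contact graph is a triangulated polygon. Throughout, I would regard a packing of horocycles as a collection of closed horoballs $B_1,\dots,B_n$ with pairwise disjoint interiors, two horocycles touching precisely when the corresponding closed horoballs meet. Each horoball has a well-defined ideal point (its centre at infinity) on $\partial\mathbb H^2$, and the first observation I would record is that two \emph{distinct} horoballs with disjoint interiors have distinct ideal points: applying an isometry sending a common ideal point to $\infty$ in the upper half-plane model turns two such horoballs into parallel half-planes $\{y\ge h\}$ and $\{y\ge h'\}$, whose interiors always overlap. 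Hence the ideal points $p_1,\dots,p_n$ of the packing are distinct and carry a well-defined cyclic order around $\partial\mathbb H^2$. (We assume $n\ge2$, so that $2n-3$ is positive; the case $n=1$ is vacuous.)

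The geometric heart of the argument is two elementary facts about tangent horoballs, both of which I would prove by moving one ideal point to $\infty$ in the upper half-plane model, performing the resulting Euclidean computation, and invoking isometry-invariance: (i) if horoballs $B$ and $B'$ based at ideal points $p$ and $p'$ are tangent, then their point of tangency lies on the geodesic with endpoints $p$ and $p'$; and (ii) in that case the whole geodesic from $p$ to $p'$ (ideal endpoints included) is contained in $\overline{B}\cup\overline{B'}$, and $\overline B\cap\overline{B'}$ consists only of the tangency point. In the normalized picture $B=\{y\ge h\}$ and $B'$ is a Euclidean disk of diameter $h$ tangent to the real axis at $p'$; the geodesic from $\infty$ to $p'$ is the vertical line $x=p'$, which lies in $\overline B$ above height $h$, in $\overline{B'}$ below height $h$, and passes through the tangency point $(p',h)$, giving both (i) and (ii) at once.

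Now draw the contact graph in the disk model by placing vertex $p_i$ at the ideal point on $\partial\mathbb H^2$ and drawing each edge $\{i,j\}$ as the chord $p_ip_j$; I claim no two chords cross. Suppose edges $e=\{i,j\}$ and $f=\{k,l\}$ had crossing chords. Then all four ideal points are distinct, and the geodesics $\gamma_e$ (endpoints $p_i,p_j$) and $\gamma_f$ (endpoints $p_k,p_l$) meet at an interior point $x\in\mathbb H^2$. By (ii), $x$ lies in one of $\overline{B_i},\overline{B_j}$ and in one of $\overline{B_k},\overline{B_l}$; say $x\in\overline{B_i}\cap\overline{B_k}$. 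Since $B_i\ne B_k$ have disjoint interiors, $x$ is their unique common point, so by (i) it lies on the geodesic from $p_i$ to $p_k$ as well. But that geodesic and $\gamma_e$ both pass through $p_i$ and through $x$, so they coincide, forcing $\{p_i,p_j\}=\{p_i,p_k\}$ and hence $p_j=p_k$ — contradicting distinctness. Therefore the drawing is plane with all vertices on the boundary circle, i.e.\ the contact graph is outerplanar, and an outerplanar graph on $n\ge2$ vertices has at most $2n-3$ edges.

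For tightness I would work in the upper half-plane model and take the horoball $\{y\ge 1\}$ together with the $n-1$ Euclidean disks of radius $\tfrac12$ centred at $(0,\tfrac12),(1,\tfrac12),\dots,(n-2,\tfrac12)$: each is tangent to the real axis, hence a horoball; each is tangent to $\{y\ge1\}$ at its topmost point; consecutive disks are tangent (their centres are at distance $1$, the sum of the radii) and non-consecutive disks are disjoint. So this is a horocycle packing, and its contact graph is the fan triangulation with $(n-1)+(n-2)=2n-3$ edges; equivalently one could invoke finite sub-configurations of Ford circles. I expect the only genuinely substantive step to be facts (i) and (ii), which localize exactly where two tangent horoballs meet; once these are in hand, the crossing argument and the outerplanar edge bound are routine.
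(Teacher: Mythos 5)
Your proof is correct and follows essentially the same route as the paper: both arguments reduce the upper bound to the fact that the contact graph of a horocycle packing is outerplanar (hence has at most $2n-3$ edges) and establish tightness with a fan-like chain of mutually tangent horoballs all tangent to one large horoball. Your write-up is more detailed than the paper's --- in particular, your facts (i) and (ii) about where tangent horoballs meet give a rigorous justification of the non-crossing chord drawing that the paper asserts without proof --- but the underlying idea is identical.
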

\begin{proof}
In the Poincar\'e model, horocycles are circles in the disc that are tangent with the boundary circle.
Consider a packing of horocycles are circles in the Poincar\'e model with boundary circle $C$.
If we have a packing of circles inside $C$, all touching $C$, then the corresponding contact graph is outerplanar: all of its vertices are on the boundary of its outer face.
It is well known that an outerplanar graph on $n$ vertices has at most $2n-3$ edges.

We observe that it is easy to find such a packing of horocycles with $2n-3$ touching pairs, as indicated in Figure~\ref{fig:horocycles}.
\end{proof}
\begin{figure}[H]
\centering
\begin{tikzpicture}[line cap=round,line join=round,>=triangle 45,x=1cm,y=1cm,scale=4]
\clip(-1.01,-1.01) rectangle (1.01,1.01);
\draw [line width=0.8pt,gray] (0,0) circle (1cm);
\draw [line width=1.2pt] (0,-0.6058339496821427) circle (0.39416605031785734cm);
\draw [line width=1.2pt] (0.5367491801703845,-0.6645214096239817) circle (0.1457820030798791cm);
\draw [line width=1.2pt] (-0.618076929262972,-0.4252880278302017) circle (0.24974071341778958cm);
\draw [line width=1.2pt] (-0.4789648026156955,-0.7536489458224747) circle (0.10709468184056929cm);
\draw [line width=1.2pt] (0.4043615177531534,-0.8363086107557989) circle (0.07126601092558467cm);
\draw [line width=1.2pt] (0.6143785081861296,-0.15938351648343624) circle (0.36529652680408564cm);
\draw [line width=1.2pt] (0.7319310466924195,-0.5765210717399204) circle (0.06832335944912675cm);
\draw [line width=1.2pt] (-0.3645953345426777,-0.870376396820022) circle (0.05646926078555587cm);
\draw [line width=1.2pt] (0.3158071667983274,-0.9054739803525566) circle (0.041173237013726334cm);
\draw [fill=black] (4.765520847944522,-3.4050239751818117) circle (2pt);
\end{tikzpicture}
\caption{}\label{fig:horocycles}
\end{figure}

\section{Preliminaries}
In this section we collect basic facts from hyperbolic geometry and graph theory that will be used in our proofs.
\begin{lemma}\label{lemma:2connected}
In a $2$-connected plane graph with $f_i$ bounded faces with $i$ edges and boundary face with $b$ edges, the number of edges is \(e=3n-3-b-\sum_{i\geq 4} (i-3)f_i\).
\end{lemma}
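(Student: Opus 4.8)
The plan is to derive this as an immediate consequence of Euler's formula for connected plane graphs together with a double-counting of edge-face incidences. First I would recall that a $2$-connected plane graph is in particular connected, so Euler's formula gives $n - e + f = 2$, where $f$ counts all faces including the unbounded (boundary) face. Writing $f = 1 + \sum_{i\geq 3} f_i$, where I extend the notation so that $f_i$ counts bounded faces with exactly $i$ edges on their boundary, Euler becomes $n - e + 1 + \sum_{i\geq 3} f_i = 2$, i.e.\ $\sum_{i\geq 3} f_i = e - n + 1$.

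Next I would set up the edge--face incidence count. The key point, where $2$-connectedness is used, is that in a $2$-connected plane graph every face boundary (bounded or unbounded) is a simple cycle, so each edge lies on the boundary of exactly two faces and each face with $i$ boundary edges contributes $i$ to the incidence count without multiplicity issues. This gives $2e = b + \sum_{i\geq 3} i f_i$, where $b$ is the number of edges on the boundary face. Now I would split the sum as $\sum_{i\geq 3} i f_i = \sum_{i\geq 3} 3 f_i + \sum_{i\geq 3}(i-3) f_i = 3(e-n+1) + \sum_{i\geq 4}(i-3)f_i$, using the Euler relation from the first step and noting the $i=3$ term drops out of the second sum. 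Substituting, $2e = b + 3(e-n+1) + \sum_{i\geq 4}(i-3)f_i$, and solving for $e$ yields $e = 3n - 3 - b - \sum_{i\geq 4}(i-3)f_i$, as claimed.

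I do not anticipate a genuine obstacle here; the only subtlety to state carefully is the role of $2$-connectedness — it is exactly what guarantees that each face is bounded by a cycle, so that the boundary-edge counts $i$ and $b$ are well-defined (no repeated edges, no cut-edges lying on a single face twice) and the incidence identity $2e = b + \sum i f_i$ holds cleanly. I would mention that without $2$-connectedness one would have to count with multiplicity (a bridge is incident to the same face twice), and the clean formula would fail. Everything else is just Euler's formula and arithmetic.
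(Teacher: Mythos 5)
Your proof is correct and follows essentially the same route as the paper's: Euler's formula for the connected plane graph plus the edge--face incidence count $2e = b + \sum_i i f_i$ (valid because $2$-connectedness makes every face boundary a cycle), followed by the same algebraic rearrangement. Your added remark on exactly where $2$-connectedness is needed is accurate but not a departure from the paper's argument.
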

\begin{proof}
Since \(G\) is connected, Euler's polyhedral formula gives \(n-e+\sum_i f_i = 1\).
Since \(G\) is \(2\)-connected, by counting the edges in each face, \(2e = \sum_i if_i +b\).
It follows that \(e=3n-3-b-\sum_{i\geq 4} (i-3)f_i\). 
\end{proof}

For later reference, we state here the elementary Gauss--Bonnet theorem for polygons in the hyperbolic plane (shown by Lambert).
\begin{lemma}[Gauss--Bonnet]\label{lem:Gauss-Bonnet}
The area of a simple polygon $P$ with $n$ vertices and internal angles $\theta_1,\dots,\theta_n$ in the hyperbolic plane (with curvature $-1$) is equal to $\area(P)=(n-2)\pi-\sum_{i=1}^n\theta_i$.
\end{lemma}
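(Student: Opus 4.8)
The plan is to reduce the statement to the classical angular-defect formula for hyperbolic triangles and then to use additivity of area and of interior angles under triangulation. I would proceed in three steps: (i) the formula for a triangle with one vertex at infinity; (ii) the formula for an arbitrary triangle; (iii) passing from triangles to a general simple $n$-gon. For Step (i), I would show that a triangle with two ordinary vertices $A,B$ carrying interior angles $\alpha,\beta$ and a third vertex at infinity has area $\pi-\alpha-\beta$. Passing to the upper half-plane model with the ideal vertex at $\infty$, the two sides at the ideal vertex become vertical Euclidean rays and the third side is an arc of a Euclidean circle centred on the real line; after a hyperbolic isometry fixing $\infty$ (a map $z\mapsto az+b$, $a>0$) I may assume this circle is the unit circle. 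Writing $A=(\cos\phi_A,\sin\phi_A)$ and $B=(\cos\phi_B,\sin\phi_B)$ with $\pi>\phi_A>\phi_B>0$, the hyperbolic area is $\int_{\cos\phi_A}^{\cos\phi_B}\int_{\sqrt{1-x^2}}^{\infty} y^{-2}\,dy\,dx=\int_{\cos\phi_A}^{\cos\phi_B}(1-x^2)^{-1/2}\,dx=\phi_A-\phi_B$ after the substitution $x=\cos\phi$. A one-line computation of the Euclidean angle between the upward vertical ray and the unit circle at $A$ and at $B$ gives $\phi_A=\pi-\alpha$ and $\phi_B=\beta$, so the area equals $\pi-\alpha-\beta$.

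For Step (ii), given a triangle $ABC$ with angles $\alpha,\beta,\gamma$ at $A,B,C$, I would let $\Omega$ be the ideal endpoint of the ray $\overrightarrow{BC}$, so that $C$ lies between $B$ and $\Omega$, and draw the geodesic ray $A\Omega$. The segment $AC$ splits the triangle $AB\Omega$ (with an ideal vertex at $\Omega$) into $ABC$ and the triangle $AC\Omega$ (again with ideal vertex $\Omega$). Applying Step (i) to $AB\Omega$, whose ordinary angles are $\beta$ at $B$ and $\alpha+\delta$ at $A$ with $\delta:=\angle CA\Omega$, and to $AC\Omega$, whose ordinary angles are $\pi-\gamma$ at $C$ (since $B,C,\Omega$ are collinear) and $\delta$ at $A$, and subtracting, the unknown $\delta$ cancels: $\area(ABC)=(\pi-\alpha-\beta-\delta)-(\gamma-\delta)=\pi-\alpha-\beta-\gamma$.

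For Step (iii), I would triangulate the simple polygon $P$ with $n\ge 3$ vertices by $n-3$ pairwise non-crossing diagonals into $n-2$ triangles $T_1,\dots,T_{n-2}$ whose vertices are all vertices of $P$. Such a triangulation exists by the standard argument: $P$ has a vertex $v$ with interior angle $<\pi$ (for instance any vertex of $P$ lying on the boundary of its convex hull), and either the chord joining the two neighbours of $v$ is a diagonal of $P$, or the vertex of $P$ inside the triangle cut off at $v$ that is farthest from that chord yields a diagonal from $v$; this uses only the geodesic structure and the behaviour of convex sets, so it holds in the hyperbolic plane, and one then recurses. Summing Step (ii) over the triangles gives $\area(P)=\sum_{j}\area(T_j)=(n-2)\pi-\sum_j\Sigma_j$, where $\Sigma_j$ is the angle sum of $T_j$. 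Since the triangulation has no interior vertices, the angles of the $T_j$ meeting at a vertex $v$ of $P$ partition the interior angle of $P$ at $v$; hence $\sum_j\Sigma_j=\sum_{i=1}^n\theta_i$, and $\area(P)=(n-2)\pi-\sum_{i=1}^n\theta_i$.

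The substance lies entirely in Steps (i)–(ii): producing the area of a hyperbolic triangle from scratch, the only genuine computation being the half-plane integral and the angle identification. A shortcut would be to invoke the differential-geometric Gauss--Bonnet theorem with curvature $K\equiv-1$ and vanishing geodesic curvature along the geodesic sides, but since the lemma is advertised as elementary I would keep the model computation. Step (iii) is bookkeeping; the only point deserving a word is the existence of a triangulation of a simple polygon in the hyperbolic setting, and this is immediate from the argument above.
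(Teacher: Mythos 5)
The paper gives no proof of this lemma at all: it is stated as a classical fact (attributed to Lambert) and used as a black box, so there is no in-paper argument to compare yours against step by step. Your derivation is the standard self-contained one --- singly asymptotic triangle by direct integration in the half-plane model, general triangle by subtracting two asymptotic triangles sharing the ideal vertex, polygon by a triangulation with no interior vertices --- and Steps (i) and (ii) are correct as written: the integral, the identifications $\phi_A=\pi-\alpha$ and $\phi_B=\beta$ (legitimate because the half-plane metric is conformal, so hyperbolic and Euclidean angles agree), and the cancellation of $\delta$ all check out.

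The one soft spot is the existence-of-a-diagonal argument in Step (iii). The Euclidean ``take the vertex inside the ear farthest from the chord $uw$'' proof relies on the region beyond the parallel line through that vertex being convex. The hyperbolic analogue of that region --- the set of points on one side of the geodesic $\ell$ through $u$ and $w$ at distance at least $c$ from it --- is \emph{not} convex: the distance to a geodesic is a convex function, so $\{x: d(x,\ell)\le c\}$ is convex, but its complement on one side (bounded by a hypercycle) is not, and the geodesic segment $vp$ can dip closer to $\ell$ than $p$ is, so the usual contradiction does not come for free. This is cosmetic rather than fatal: pass to the Klein--Beltrami model, in which hyperbolic geodesics are Euclidean chords of the disc, so a simple hyperbolic polygon \emph{is} a simple Euclidean polygon and the Euclidean triangulation theorem applies verbatim; the resulting diagonals are hyperbolic geodesics, and the rest of Step (iii) (the count of $n-2$ triangles and the matching of the triangle angle sums with the $\theta_i$ at the polygon's vertices) goes through unchanged.
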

The Euclidean version of the following lemma is usually called Oler's inequality \cite{Oler61, R51}, and it goes back to Thue \cite{Thue}.
It was extended to the hyperbolic plane by B\"or\"oczky Jr.\ \cite{B02}, of which the following lemma is a special case.

\begin{lemma}\label{lemma:oler}
Let $G$ be a $2$-connected plane graph in the hyperbolic plane with each bounded face a triangle, and with each edge of length at least $d$.
Let $B$ denote the boundary polygon of $G$.
Then
\[ \frac{\area(B)}{A(d)} + \frac{\per(B)}{d} \geq 2n-2,\]
where $n$ is the number of vertices of $G$.
Equality holds if and only if each bounded face of $G$ is an equilateral triangle of side length $d$.
\end{lemma}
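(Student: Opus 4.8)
The plan is to recognize this as the hyperbolic counterpart of Oler's inequality, so that it follows from B\"or\"oczky Jr.'s general theorem~\cite{B02} by taking all bounded faces to be triangles and the prescribed minimum edge length to be $d$; the bulk of the work is already contained in \cite{B02}, and I would simply identify the precise instance being invoked. For a self-contained treatment one can instead follow the classical inductive scheme used for the Euclidean inequality \cite{R51,Oler61,Thue}, whose two essential ingredients I now describe.

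The first thing I would prove is a local estimate for a single triangle: if $T$ is a hyperbolic triangle all of whose sides have length at least $d$, then
\[
\frac{\area(T)}{A(d)} + \frac{\per(T) - 3d}{d} \ge 1,
\]
with equality if and only if $T$ is equilateral of side length $d$. I would prove this by expressing the angles of $T$ in terms of its side lengths via the hyperbolic law of cosines, using Lemma~\ref{lem:Gauss-Bonnet} to write $\area(T)$ as $\pi$ minus the angle sum, and then minimizing the resulting function of two variables over $\{a,b,c \ge d\}$: the minimum value $1$ is attained at the equilateral point, and the left-hand side also tends to $1$ along the degenerate ``flat'' limit $(d,d,2d)$, where the area term vanishes while the perimeter term alone contributes $1$. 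This estimate is exactly the case $n=3$ of the Lemma.

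The second ingredient is an induction on the number $n$ of vertices. The cases $n\le 3$ are immediate: for $n=1,2$ the boundary encloses zero area and the perimeter term suffices, and $n=3$ is the local estimate. For the inductive step I would remove a suitable piece of the boundary. When $G$ has a boundary vertex $v$ of degree $2$, its two incident edges $uv,vw$ bound a triangular face $uvw$; deleting $v$ and these two edges leaves a $2$-connected plane graph $G'$ on $n-1$ vertices with triangular bounded faces and all edges of length at least $d$, and, since $\area(B)-\area(B')=\area(uvw)$ and $\per(B)-\per(B')=\ell(uv)+\ell(vw)-\ell(uw)$, the step reduces to verifying $\area(uvw)/A(d) + (\ell(uv)+\ell(vw)-\ell(uw))/d \ge 2$. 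When no boundary vertex has degree $2$ (for instance a triangulated polygon with an interior vertex), one instead removes an interior vertex and re-triangulates the resulting hole, or uses $2$-connectivity to locate another admissible reduction.

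The step I expect to be the main obstacle is precisely this combinatorial bookkeeping. The ear-removal inequality $\area(uvw)/A(d) + (\ell(uv)+\ell(vw)-\ell(uw))/d \ge 2$ follows from the local estimate when $\ell(uw)=d$, but it fails in the flat limit $\ell(uw)\to \ell(uv)+\ell(vw)$, and re-triangulating around a deleted interior vertex need not preserve the lower bound $d$ on edge lengths. Resolving these cases — choosing, at each step, a boundary reduction for which the relevant inequality does hold, possibly at the cost of temporarily allowing non-triangular faces and carrying a correspondingly generalized inequality — is exactly the part that \cite{B02} handles. Finally, the equality characterization is obtained by running the induction in reverse: equality throughout forces the local estimate to be tight at every bounded face, hence every bounded face is an equilateral triangle of side length $d$.
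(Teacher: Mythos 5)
Your primary route---invoking B\"or\"oczky Jr.'s hyperbolic generalization of Oler's inequality \cite{B02} and identifying the lemma as the special case in which every bounded face is a triangle and the prescribed minimum edge length is $d$---is exactly what the paper does: it states the lemma without proof as a special case of \cite{B02}. Your supplementary self-contained sketch is not required, and you correctly identify its genuine obstacles (the ear-removal inequality failing in the flat limit, and re-triangulation not preserving the edge-length bound) as precisely the work that \cite{B02} carries out.
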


As a corollary of the above lemma we obtain a strengthening of Bowen’s Lemma 2 in \cite{B00}, leading to a slightly simplified proof of Bowen's result (Theorem~\ref{thm:bowen} in the next section).

\begin{lemma}\label{lemma:area:ngons}
Let \(G\) be a $2$-connected minimum-distance graph with minimum distance $d$ in the hyperbolic plane, with $n$ vertices and $b$ boundary vertices.
Then \(\area(G)\geq(2n-2-b)A(d)\), with equality iff \(G\) can be triangulated into \(n-2\) equilateral triangles of side length \(d\).
\end{lemma}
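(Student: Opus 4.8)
The plan is to derive this from the hyperbolic Oler inequality (Lemma~\ref{lemma:oler}), by first enlarging $G$ to a triangulated graph and then observing that, for a minimum-distance graph, the perimeter term in Oler's inequality is exactly $b$.

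\emph{Reduction to a triangulated graph.} Since $G$ is a minimum-distance graph with minimum distance $d$, any two of its vertices lie at distance at least $d$. Each bounded face of the $2$-connected plane graph $G$ is a simple geodesic polygon, and any simple geodesic polygon in the hyperbolic plane admits a triangulation by non-crossing interior diagonals joining its vertices (the usual ear-clipping argument applies verbatim, using that some vertex of the polygon is convex because the angle sum of an $n$-gon is less than $(n-2)\pi$). Performing this inside every bounded face produces a plane graph $G'$ on the same vertex set: only edges interior to bounded faces were inserted, so $G'$ has the same outer face and hence the same boundary polygon $B$ as $G$; moreover $G'$ is $2$-connected, each of its bounded faces is a triangle, and each of its edges has length at least $d$. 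Applying Lemma~\ref{lemma:oler} to $G'$ gives
\[
\frac{\area(B)}{A(d)} + \frac{\per(B)}{d} \;\geq\; 2n-2 .
\]

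\emph{Evaluating the perimeter and concluding.} The boundary polygon $B$ is the boundary cycle of the outer face of $G$; it has exactly $b$ vertices, hence $b$ edges, and each of those edges is an edge of $G$, so has length exactly $d$. Therefore $\per(B)=bd$, and the inequality above rearranges to $\area(B)\geq(2n-2-b)A(d)$. Since $\area(G)$ is the area of the region enclosed by $B$, this is exactly the claimed bound.

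\emph{Equality.} By the equality clause of Lemma~\ref{lemma:oler}, equality above holds if and only if every bounded face of $G'$ is an equilateral triangle of side length $d$. In that case each diagonal inserted in the reduction step has length exactly $d$, so it was already an edge of the minimum-distance graph $G$; hence $G'=G$, and $G$ is itself triangulated into equilateral triangles of side $d$ (there are $2n-2-b$ of them, by Euler's formula, i.e.\ Lemma~\ref{lemma:2connected} with all $f_i=0$ for $i\geq4$), each of area $A(d)$ by Lemma~\ref{lem:Gauss-Bonnet}, which recovers $\area(G)=(2n-2-b)A(d)$. Conversely, if $G$ triangulates into equilateral triangles of side $d$, then Gauss--Bonnet computes $\area(G)$ directly as $(2n-2-b)A(d)$.

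The argument is short, and there is no substantial obstacle; the only points needing care are the legitimacy of the triangulation step (that a bounded geodesic polygon has an interior geodesic triangulation, that adding these diagonals preserves $2$-connectedness, and that it leaves the boundary polygon $B$ unchanged) and the bookkeeping of the equality case through Oler's equality condition, including the remark that diagonals of length exactly $d$ already belong to the minimum-distance graph.
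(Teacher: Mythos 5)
Your proof is correct and follows exactly the route the paper intends: the lemma is stated there as a direct corollary of the hyperbolic Oler inequality (Lemma~\ref{lemma:oler}) with no written proof, and your argument supplies precisely the missing details (triangulating the bounded faces by interior geodesic diagonals of length at least $d$, noting $\per(B)=bd$, and pushing the equality case through Oler's equality condition via the observation that a diagonal of length exactly $d$ would already be an edge of the minimum-distance graph). One small remark: your count of $2n-2-b$ triangles in the equality case is the correct one, and the ``$n-2$'' appearing in the statement of the lemma is accurate only when every vertex lies on the boundary.
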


\begin{lemma}[Hyperbolic isoperimetric inequality, Schmidt \cite{S40}]\label{lemma:isoperimetric}
Suppose that a simple closed curve in the hyperbolic plane has length \(L\) and area \(A\).
Then \(L^2\geq 4\pi A + A^2\), with equality iff the curve is a circle.
\end{lemma}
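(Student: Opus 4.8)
This is a classical inequality, due to Schmidt \cite{S40}, and in the paper we will simply cite it; but the natural self-contained route is the parallel-curve method of Bol and Fiala, which I sketch here. Since both sides depend only on $L$ and $A$ and $t\mapsto t^{2}+4\pi t$ is increasing on $[0,\infty)$, I would first reduce to a convex domain. Given a rectifiable simple closed curve $\gamma$ enclosing a region $\Omega$ (if $L=\infty$ there is nothing to prove), pass to the convex hull $\widehat{\Omega}$: clearly $\area(\widehat{\Omega})\ge\area(\Omega)$, while $\per(\widehat{\Omega})\le\operatorname{length}(\gamma)$ because $\partial\widehat{\Omega}$ consists of arcs of $\gamma$ together with countably many geodesic ``bridges'' spanning the concavities of $\gamma$, and along each bridge $\gamma$ is at least as long as the bridge since geodesics minimize length. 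Together with the monotonicity above this reduces the statement to $\per(K)^{2}\ge\area(K)^{2}+4\pi\area(K)$ for convex bodies $K$, and, approximating from inside, we may assume $\partial K$ is piecewise smooth.

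For such a $K$ with inradius $\rho$, consider the inner parallel sets $K^{(s)}=\{x\in K:\operatorname{dist}(x,\partial K)\ge s\}$ for $0\le s<\rho$, with area $A(s)$ and perimeter $L(s)$; one checks that $K^{(s)}$ is star-shaped with respect to an incentre, so it is simply connected and $\partial K^{(s)}=\{x:\operatorname{dist}(x,\partial K)=s\}$. The coarea formula gives $A'(s)=-L(s)$. Applying Gauss--Bonnet (Lemma~\ref{lem:Gauss-Bonnet}, in the form allowing boundary corners) to $K^{(s)}$ in curvature $-1$ shows that the total geodesic curvature of $\partial K^{(s)}$ plus the sum of its exterior angles equals $2\pi+A(s)$; combining this with the first variation of length, including the contributions of the corners of $\partial K^{(s)}$, gives $L'(s)\le-(2\pi+A(s))$, with equality if and only if $\partial K^{(s)}$ is smooth. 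Hence
\[
\frac{d}{ds}\left(L(s)^{2}-A(s)^{2}-4\pi A(s)\right)=2L(s)\left(L'(s)+2\pi+A(s)\right)\le 0,
\]
so the isoperimetric deficit $D(s):=L(s)^{2}-A(s)^{2}-4\pi A(s)$ is non-increasing. As $s\to\rho^{-}$ the nested sets $K^{(s)}$ shrink to $\bigcap_{s<\rho}K^{(s)}$, the convex set of incentres of $K$, which has empty interior and is therefore a point or a geodesic segment; in particular $A(s)\to 0$, and since $L(s)^{2}\ge 0$ we obtain $\liminf_{s\to\rho^{-}}D(s)\ge 0$; therefore $D(0)\ge 0$, which is the asserted inequality. (That $D\equiv 0$ for a metric disc is checked directly from $L=2\pi\sinh r$ and $A=2\pi(\cosh r-1)$.)

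For the equality case, $D(0)=0$ and the monotonicity force $D(s)=0$ for all $s$, hence $L(s)\to 0$ as $s\to\rho^{-}$, so the core is a single point $p$, and $\partial K^{(s)}$ is smooth for every $s<\rho$. Then the inward normal geodesics foliate $\operatorname{int}(K)\setminus\{p\}$ without crossing; each has length $\rho$ from its foot on $\partial K$ to $p$, so every point of $\partial K$ lies at distance $\rho$ from $p$ and $K=\overline{B}(p,\rho)$. Tracing equality back through the reductions---a corner of a convex body can be trimmed by a geodesic chord, which strictly decreases $D$ to first order, so an extremal convex body is smooth; and equality in the hull step forces $\per(\widehat{\Omega})=L$, $\area(\widehat{\Omega})=A$ with $\widehat{\Omega}$ a disc, so $\gamma$ is its bounding circle---shows equality holds only for circles.

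The main obstacle is the bookkeeping in the second paragraph: controlling the regularity of the inner parallel curves, justifying the coarea and first-variation identities up to the inradius, and keeping careful track of the corner contributions so that the monotonicity of $D$ is rigorous---this is precisely the content of the classical Bol--Fiala argument. An alternative that sidesteps the parallel-curve analysis is iterated Steiner symmetrization about geodesics, weighted so as to preserve hyperbolic area while not increasing perimeter, whose iterates converge to a metric disc.
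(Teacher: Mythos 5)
The paper gives no proof of this lemma: it is quoted from Schmidt \cite{S40} as a classical fact, exactly as you anticipate in your opening sentence, so there is nothing to compare step by step. Your sketch is the standard Bol--Fiala inner-parallel (monotonicity of the isoperimetric deficit) argument, and its outline is sound: the reduction to the convex hull via the monotonicity of $t\mapsto t^2+4\pi t$, the Gauss--Bonnet computation in curvature $-1$ giving $-L'(s)\geq 2\pi+A(s)$, hence $D'(s)=2L(s)\bigl(L'(s)+2\pi+A(s)\bigr)\leq 0$, the limit $A(s)\to 0$ at the inradius, and the direct verification $D\equiv 0$ for discs from $L=2\pi\sinh r$, $A=2\pi(\cosh r-1)$. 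You are also right that the substance lies in the parts you defer: the inner parallel sets of a general domain can develop corners and disconnect, which is precisely why the reduction to convex $K$ (whose parallel bodies stay convex) matters, and the Gauss--Bonnet bookkeeping closes only because a corner of exterior angle $\theta$ contributes a length-loss rate of $2\tan(\theta/2)\geq\theta$, with strict inequality for $\theta>0$ --- which is also what drives your equality analysis. Two small cautions: the claim that trimming a corner ``strictly decreases $D$ to first order'' is not needed and is harder to justify than the route you already have (equality forces $D(s)\equiv 0$, hence smoothness of every $\partial K^{(s)}$ directly), and the weighted hyperbolic Steiner symmetrization you mention in closing is considerably more delicate than the one-line description suggests. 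Since the paper only ever uses the weak consequence $L>A$ (in the form $bd>A$ in Propositions~\ref{prop:upper-bound2} and~\ref{prop:upper-bound3}), citing Schmidt is the right editorial choice; your sketch would serve as an appendix-level outline if a self-contained treatment were ever wanted, but it is not a complete proof as written.
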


\section{Proof of Proposition~\ref{prop:small-values}}
This proposition follows from the following description of the finite difference \[\Delta e_{d(7)}(n) := e_{d(7)}(n+1)-e_{d(7)}(n).\]
\begin{proposition}\label{prop:recursion}
Let $A=(a_n)_{n\geq 0}$ be the infinite word defined as the limit of the sequence $(A_i)_{i\geq 0}$ of finite words defined recursively by $A_0=012$ and $A_{i+1}=A_i\cdot2^{[4-a_i]}3$, $i\geq 0$.
Then $\Delta e_{d(7)}(n) = a_n$.
\end{proposition}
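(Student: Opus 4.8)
The plan is to reduce the statement to a combinatorial analysis of the spiral construction. By Bowen's theorem \cite{B00}, reproved in this paper as Theorem~\ref{thm:bowen}, the nested family $S_n$ produced by the spiral construction --- obtained by listing the vertices $v_1, v_2, v_3, \dots$ of the order-$7$ triangular tiling in the fixed spiral order and taking $S_n$ to be the subgraph induced on $\{v_1, \dots, v_n\}$ --- satisfies $|E(S_n)| = e_{d(7)}(n)$ for every $n$. Therefore $\Delta e_{d(7)}(n) = |E(S_{n+1})| - |E(S_n)|$ equals the \emph{back-degree} $d^-(v_{n+1})$, the number of neighbours of $v_{n+1}$ among $v_1, \dots, v_n$, and it suffices to prove $d^-(v_{n+1}) = a_n$ for all $n \ge 0$.

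First I would describe the spiral order explicitly and record its self-similar shell structure: after the initial equilateral triangle $v_1 v_2 v_3$ --- which accounts for the prefix $012$, i.e.\ for back-degrees $0, 1, 2$ --- the spiral completes, one at a time, the link (the $7$-cycle of neighbours in the tiling) of $v_1$, then of $v_2$, then of $v_3$, and so on. The core of the proof is the following local claim: at the moment the spiral begins to complete the link of $v_{i+1}$, exactly $a_i + 2$ of its seven link-vertices are already present, and they form a sub-path of its link-cycle. Granting this, completing the link appends a block of $7 - (a_i + 2) = (4 - a_i) + 1$ new vertices; each new vertex but the last is adjacent to $v_{i+1}$ and to the previously placed vertex of the link and hence has back-degree $2$, while the last one additionally closes the $7$-cycle and so has back-degree $3$. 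This block is exactly the word $2^{[4 - a_i]}3$, so processing the letter $a_i$ --- which is the back-degree of $v_{i+1}$ and has already been written into the word --- appends precisely the right block. An induction on $i$, matching the growth of $S_n$ against the recursion $A_{i+1} = A_i \cdot 2^{[4 - a_i]}3$, then yields $d^-(v_{n+1}) = a_n$ for all $n$; the base case is the direct observation that $v_1, v_2, v_3$ have back-degrees $0, 1, 2$ and that the link of $v_1$ starts with its two present neighbours $v_2, v_3$ (so that $a_0 + 2 = 2$), producing the first block $2^{[4 - a_0]}3 = 2^{4}3$.

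The main obstacle is the local claim that exactly $a_i + 2$ link-vertices of $v_{i+1}$ are present when its link is completed --- equivalently, that the appended block depends only on the already recorded value $a_i$ and always equals $2^{[4 - a_i]}3$. I would prove this by carrying along, together with the spiral, the boundary cycle of the current region: when $v_{i+1}$ was first placed it had $a_i$ present neighbours forming an arc of its link-cycle, and one must show that between that moment and $v_{i+1}$'s own turn the spiral adds exactly two further neighbours of $v_{i+1}$, one at each end of that arc --- these being the first vertices of the two blocks that complete the links of the two ``adjacent centres'' flanking $v_{i+1}$ --- and, in particular, that the last vertex of every block has no present neighbour outside the link it has just closed, while the first vertex of every block is adjacent, among present vertices, only to its centre and to the last vertex of the arc. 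This amounts to an elementary but somewhat delicate case analysis of how consecutive blocks meet at their seams, using only that the order-$7$ tiling is vertex-transitive with all faces triangles and all vertices of degree $7$, together with the fact that the region remains a topological disk in which the vertex-links already completed pairwise overlap in at most a path, which is evident from the spiral construction. Once this is in place, everything else is the bookkeeping of the substitution rule.
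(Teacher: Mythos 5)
The paper in fact gives no proof of this proposition: it is stated, paraphrased informally via the substitution diagram, and then used in the proof of Proposition~\ref{prop:small-values}. So your outline is not competing with a written argument but supplying a missing one, and it is the right outline: reduce via Theorem~\ref{thm:bowen} to the back-degrees $d^-(v_{n+1})$ in the spiral $G_{3,7}(n)$; observe that the spiral completes the links (7-cycles) of $v_1,v_2,v_3,\dots$ one at a time; and prove by induction that when the turn of $v_{i+1}$ comes, exactly $a_i+2$ of its seven link-vertices are present and form an arc, so that completing the link contributes the back-degree block $2^{[4-a_i]}3$ of length $7-(a_i+2)=(4-a_i)+1$. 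This matches the recursion exactly, and the small cases ($012$ from the initial triangle, then $22223$, $2223$, $223,\dots$ from the links of $v_1,v_2,v_3,\dots$) confirm it.

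One concrete slip in your sketch: the two neighbours of $v_{i+1}$ that appear between its placement and its own turn are \emph{not} ``the first vertices of the two blocks that complete the links of the two adjacent centres flanking $v_{i+1}$.'' They are the spiral successor $v_{i+2}$ (adjacent to $v_{i+1}$ by the very definition of the spiral) and the \emph{last} vertex of the block of $v_i$, i.e.\ the vertex closing the link of $v_i$, which is adjacent to $v_{i+1}$ precisely because $v_{i+1}$ sits at one end of the arc of present neighbours of $v_i$. (For example, the two extra neighbours of $v_4$ are $v_5$ and $v_{15}$; neither begins a block.) The misstatement is only a parenthetical, and your substantive sub-claims (first vertex of a block has back-degree $2$, last has back-degree $3$, no accidental adjacencies) are the correct ones to prove; but the correct identification is exactly the invariant that propagates the induction from $v_i$ to $v_{i+1}$, so you would have to fix it before the argument closes. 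The remaining work --- ruling out accidental adjacencies using that the occupied region stays a combinatorial disk in the order-$7$ tiling --- is genuinely needed and only sketched here, but it is routine, and with these repairs your plan yields a complete proof.
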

In other words, we start with the word $012$, and then working from left to right, for each $0$ we encounter, we add $2,2,2,2,3$ on the right, for each $1$ we add $2,2,2,3$, for each $2$ we add $2,2,3$, and for each $3$ we add $2,3$, as shown in the following diagram.
\begin{center}
\begin{tikzpicture}[
  level 1/.style={sibling distance=4.6cm},
  level 2/.style={sibling distance=4cm},
  level 3/.style={sibling distance=0.9cm},
]

\node {$\bullet$}
  child {node {0}
    child {node {22223}
      child {node {223}}
      child {node {223}}
      child {node {223}}
      child {node {223}}
      child {node {23}}
    }
  }
  child {node {1}
    child {node {2223}
      child {node {223}}
      child {node {223}}
      child {node {223}}
      child {node {23}}
    }
  }
  child {node {2}
    child {node {223}
      child {node {223}}
      child {node {223}}
      child {node {23}}
    }
  };

\end{tikzpicture}
\end{center}

In this way we obtain
\[A = 012\;22223\,2223\,223\;223\,223\,223\,223\,23\;223\,223\,223\,23\;223\,223\,23\dots.\]
By summing $a_n$, we obtain $e_{d(7)}(n) = \sum_{i=0}^{n-1}a_i$, thus giving the following first few values:

\smallskip
\begin{tabular}{c|cccccccccccccccccccc}
    $n$ & 1 & 2 & 3 & 4 & 5 & 6 & 7 & 8 & 9 & 10 & 11 & 12 & 13 & 14 & 15 & 16 & 17 & 18 & 19 & 20 \\ \hline
    $e_{d(7)}(n)$ & 0 & 1 & 3 & 5 & 7 & 9 & 11 & 14 & 16 & 18 & 20 & 23 & 25 & 27 & 30 & 32 & 34 & 37 & 39 & 41
\end{tabular}

\begin{proof}[Proof of Proposition~\ref{prop:small-values}]
To show that $e_{d(7)}(n) = \left\lfloor\frac{7}{2}n - \sqrt{\frac54n^2+15n-4}\right\rfloor$ for all $n\leq 366536$, we computed $e_{d(7)}(n)$ for these values of $n$ using the recursion of Proposition~\ref{prop:recursion}, and compare it to the formula.
(This is very time intensive, which is why the largest value $366536$ is rather modest.)

If we consider complete layers of the hyperbolic order-$7$ triangular tiling, starting with a central vertex, we obtain that the number of vertices is
\[n=7F_{2k+1}-6 = \frac{7}{\sqrt{5}}\varphi^{2k+1}+\frac{7}{\sqrt{5}}\varphi^{-2k-1}-6\]
and the number of edges is $e=7(3F_{2k+1}-F_{2k}-3)$, where $F_i$ is the $i$-th Fibonacci number (see \cite[Sequence A001354]{oeis}).
Writing $e$ in terms of $n$, we obtain
\[e=\left(3-\frac{1}{\varphi}\right)n-3-\frac{6}{\varphi} + 7\varphi^{-2k-1}.\]
We can write $\varphi^{2k+1}$ in terms of $n$ only:
$\varphi^{2k+1}=\frac12\left(\frac{\sqrt{5}}{7}(n+6)+\sqrt{\frac{5}{49}(n+6)^2-4}\right)$
and $\varphi^{-2k-1}=\frac12\left(\frac{\sqrt{5}}{7}(n+6)-\sqrt{\frac{5}{49}(n+6)^2-4}\right)$.
Thus, for complete layers we obtain
\begin{align*}
    e &= \left(4-\varphi\right)n-3-\frac{6}{\varphi} + \frac{98/\sqrt{5}}{n+6+\sqrt{(n+6)^2-14^2}} \\
    &= \frac{7}{2}n - \frac12\sqrt{5(n+6)^2-14^2}.
\end{align*}
Bowen \cite{B00} showed that these complete layers give the maximum number of edges for the same number of vertices (see Theorem~\ref{thm:bowen} below).
\end{proof}

\section{The spiral construction and a generalization of Bowen's upper bound}
We first give a description of the spiral construction for all $\{p,q\}$-tesselations.
Let $p,q\geq 3$ be such that $(p-2)(q-2) > 4$.
Let $G_{p,q}$ be the infinite $\{p,q\}$-tesselation in the hyperbolic plane by regular $p$-gons such that there are exactly $q$ polygons at each vertex.
Let $e=v_1v_2$ be an edge of $G_{p,q}$.
We next define $v_3,v_4,\dots$ inductively, generalizing the spiral graphs of Bowen \cite{B00} where $p=3$.
Assume that $v_1,\dots,v_i$ has already been found.
Let $v_{j_1},\dots,v_{j_t}$ be all the previously defined neighbours of $v_i$, starting with $v_{j_1}=v_{i-1}$, and moving in the clockwise direction around $v_i$, in other words, such that $\triangle v_{j_k}v_{j_{k+1}}v_i$ is positively oriented for each $k=1,\dots,t-1$.
We then let $v_{i+1}$ be the first vertex of $G_{p,q}$ adjacent to $v_i$ after $v_{j_t}$, still moving clockwise around $v_i$.
The spiral graph $G_{p,q}(n)$ is the subgraph of $G_{p,q}$ induced by $v_1,\dots,v_n$.
\begin{figure}[H]
    \centering
    \includegraphics[width=0.5\linewidth]{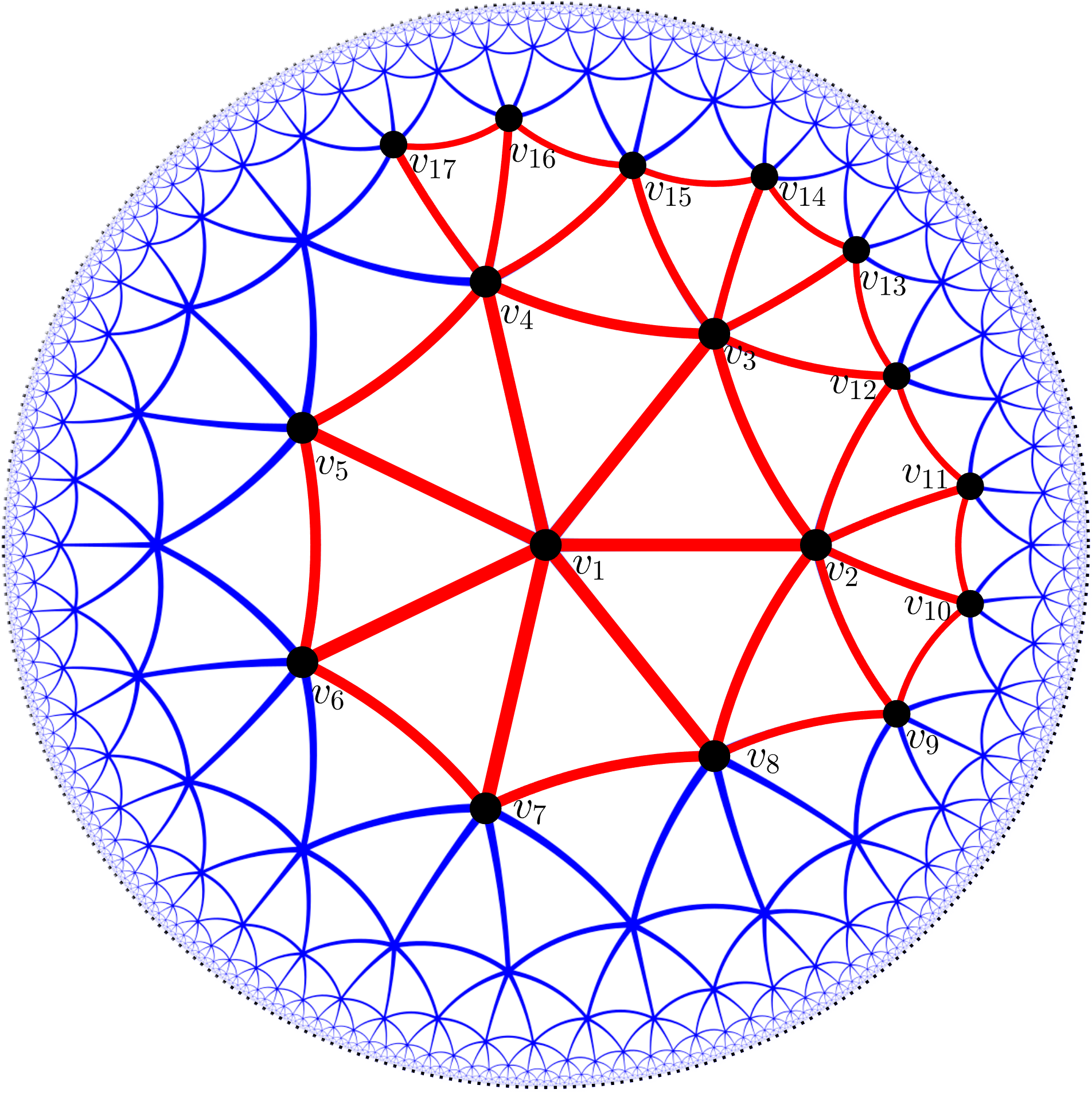}
    \caption{A spiral graph in the order-7 triangular tiling on 17 vertices. (Adapted from a figure from Wikipedia \cite{Taxel})}
    \label{fig:spiral}
\end{figure}
We first give a slightly simpler proof of Bowen's theorem on minimum-distance graphs with distance $d(q)$.
\begin{theorem}[Bowen \cite{B00}]\label{thm:bowen}
    For any $q\geq 7$, the maximum number of edges $e_{d(q)}(n)$ of a minimum-distance graph with distance $d(q)$ is attained by $G_{3,q}(n)$ for all $n\geq 1$.
\end{theorem}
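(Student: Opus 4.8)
The plan is to prove Theorem~\ref{thm:bowen} via the area bound of Lemma~\ref{lemma:area:ngons} combined with the isoperimetric inequality of Lemma~\ref{lemma:isoperimetric}, following Bowen but using Lemma~\ref{lemma:area:ngons} to streamline the argument. First I would reduce to the case of a $2$-connected minimum-distance graph $G$ with distance $d=d(q)$: any minimum-distance graph can be augmented (by adding edges of length $d$, or passing to a $2$-connected spanning subgraph that preserves the edge count up to the standard $+2$ slack) so that we may assume $G$ is $2$-connected with boundary polygon $B$ having $b$ vertices and $n$ total vertices, and every bounded face a triangle; since $d=d(q)$ is exactly the side length for which the order-$q$ triangular tiling exists, all these triangles, if equilateral of side $d$, tile locally with $q$ around each interior vertex.

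The core estimate is to count edges via Lemma~\ref{lemma:2connected}, which for an all-triangle $2$-connected plane graph gives $e = 3n-3-b$ (the $\sum_{i\ge 4}(i-3)f_i$ term vanishes). So maximizing $e$ is the same as minimizing the number of boundary vertices $b$ for a fixed $n$. From Lemma~\ref{lemma:area:ngons} we get $\area(G) \ge (2n-2-b)A(d)$, and since the boundary $B$ has perimeter $\per(B)\ge bd$ (each edge has length $\ge d$), combined with the isoperimetric inequality $\per(B)^2 \ge 4\pi\area(G) + \area(G)^2$ we obtain, writing $a = \area(G)$,
\[
b^2 d^2 \;\ge\; 4\pi a + a^2 \;\ge\; 4\pi (2n-2-b)A(d) + \bigl((2n-2-b)A(d)\bigr)^2.
\]
Using $A(d) = \pi - 3\alpha$ and $\alpha = 2\pi/q$ for $d=d(q)$, this becomes a quadratic inequality in $b$ (for fixed $n$) that forces $b$ to be at least a certain explicit value $b_0(n)$; plugging $b \ge b_0(n)$ back into $e = 3n-3-b$ gives $e \le 3n-3-b_0(n)$, which should match exactly the edge count $e = 3n-3-b(G_{3,q}(n))$ of the spiral graph. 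The remaining task is to verify that the spiral construction $G_{3,q}(n)$ actually achieves the minimal boundary length: this is a combinatorial computation of $b(G_{3,q}(n))$ as a function of $n$ using the layer-by-layer growth of the tiling, together with the observation that within a layer the boundary grows as slowly as the isoperimetric bound permits. One also needs to handle the small slack from the reduction to $2$-connectedness and from the floor functions, i.e.\ check that the $+O(1)$ discrepancies are absorbed correctly and that equality in Lemma~\ref{lemma:area:ngons} (forcing a genuine sub-tiling) is compatible with the extremal configuration.

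The main obstacle I anticipate is not the inequality chain itself but the bookkeeping that turns the quadratic bound on $b$ into a sharp statement matching the spiral: one must show that among all $2$-connected triangulations on $n$ vertices realizable with edges of length $\ge d(q)$, the spiral simultaneously (i) has every bounded face an exact equilateral $d(q)$-triangle, so that $\area(G) = (2n-2-b)A(d)$ holds with equality, (ii) has its boundary nearly a metric circle so the isoperimetric inequality is nearly tight, and (iii) has boundary edge count matching the integer part of the root of the quadratic. Reconciling these three ``near-equalities'' into one exact optimum — in particular controlling the interaction between the isoperimetric defect and the non-equilateral-triangle defect, since improving one can worsen the other — is the delicate point, and is presumably where the ``slightly simpler'' reorganization over Bowen's original argument does its work, by packaging the area estimate cleanly in Lemma~\ref{lemma:area:ngons} rather than re-deriving it inline. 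I would structure the write-up so that the quadratic-in-$b$ inequality is derived first in full generality, then specialized, then matched against an explicit formula for $b(G_{3,q}(n))$ proved by induction on the number of completed layers.
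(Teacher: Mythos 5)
Your plan contains a genuine quantitative gap at its central step: the isoperimetric inequality of Lemma~\ref{lemma:isoperimetric} is not tight for the boundary of the spiral, and the loss is linear in $n$, not $O(1)$, so no bookkeeping can close it. Concretely, for $q=7$ the spiral $G_{3,7}(n)$ has $b_s=n/\varphi+O(1)\approx 0.618n$ boundary edges, hence perimeter $L=b_s\,d(7)\approx 0.674n$, while its area is $A=(2n-2-b_s)(\pi/7)\approx 0.620n$; thus $L\approx 1.087\,A$. But $L^2\geq A^2+4\pi A$ only forces $L\geq A+2\pi-o(1)$ for large regions, so the spiral satisfies the isoperimetric inequality with a surplus of order $n^2$ on the squared side. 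Running your chain of inequalities therefore yields only $b\geq \frac{2\pi-6\alpha}{d+\pi-3\alpha}(n-1)\approx 0.583n$, hence $e\leq 3n-3-b\approx 2.417n$, which does not match the true value $e_{d(7)}(n)=(4-\varphi)n+O(1)\approx 2.382n$. Your item (ii) --- that the spiral's boundary is ``nearly a metric circle so the isoperimetric inequality is nearly tight'' --- is simply false in the hyperbolic plane: the isoperimetric deficit of the spiral region does not vanish asymptotically. This is precisely why the paper deploys the isoperimetric inequality only for the non-sharp bounds $\gamma_3$ and $\gamma_5$ at distances $d\neq d(k)$, never in the proof of Theorem~\ref{thm:bowen}.

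The paper's actual proof replaces your isoperimetric step by a direct comparison of the extremal graph $G$ with the spiral $G_s$, by induction on $n$. Assuming $e>e_s$ forces $b<b_s$ via Lemma~\ref{lemma:2connected}; one then removes the boundary layer of each graph, bounds the angle sum of the boundary polygon of $G$ from below by assigning each removed edge to an interior angle of measure at least $2\pi/q$, bounds the angle sum of the boundary polygon of $G_s$ from above using the fact that the non-removed boundary vertices of a spiral have degree at most $4$, converts both angle bounds into area bounds via Gauss--Bonnet (Lemma~\ref{lem:Gauss-Bonnet}), and then contradicts the opposite area comparison supplied by Lemma~\ref{lemma:area:ngons}. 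If you want to salvage your write-up, you must substitute this Gauss--Bonnet angle-counting comparison (or an equivalent combinatorial comparison with $G_s$) for the isoperimetric inequality; the rest of your reductions (planarity, $2$-connectivity, $e=3n-3-b-\sum_{i\geq 4}(i-3)f_i$, and the use of Lemma~\ref{lemma:area:ngons}) are consistent with the paper's argument.
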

\begin{proof}
    The theorem is trivial if $n\leq 3$, so we assume that $n > 3$ and use induction.
    Consider a minimum-distance graph $G$ with $n$ vertices, distance $d(q)$, and with the maximum number $e=e_{d(q)}(n)$ of edges.
    Two edges cannot intersect, otherwise the triangle inequality would imply that there are two vertices at distance strictly less than $d$.
    Thus, the graph \(G\) is planar.
    If $G$ is not connected, then we can move one of its components towards another component until the distance between the two components becomes $d$, thereby increasing the number of edges.
    Thus $G$ is connected.
    Similarly, $G$ has to be \(2\)-connected, otherwise we can rotate one of its blocks around a cutpoint until a new edge is created.
    By Lemma~\ref{lemma:2connected},
    \begin{equation}\label{eq1}
        e = 3n-3-b-(f_4+2f_5+3f_6+\dots). 
    \end{equation}
    Let $G_s=G_{3,q}(n)$ be the spiral graph on $n$ vertices contained in $G_{3,q}$.
    This is also a minimum-distance graph with distance $d(q)$.
    Then $G_s$ is planar and $2$-connected by construction, and all bounded faces are triangles.
    Lemma~\ref{lemma:2connected} now gives $e_s=3n-3-b_s$, where $e_s$ is the number of edges of $G_s$, and $b_s$ the number of edges on its boundary face.
    By assumption, $e\geq e_s$, from which it follows that $b\leq b_s$, and furthermore, if $b=b_s$, then the spiral construction is optimal, and all optimal graphs have only triangles for bounded faces.
    Thus, to prove the theorem it suffices to show that $b=b_s$.
    We do this by assuming that $b < b_s$ and deriving a contradiction.
    
    We now remove the boundary vertices of $G$ to obtain a smaller graph $G'$ with $n'=n-b$ vertices and $e'$ edges.
    If $b=n$, then \eqref{eq1} gives $e\leq 2n-3$.
    On the other hand, as we add points $v_1,\dots,v_n$ in the spiral graph, from $v_3$ onwards, we add at least $2$ edges for every point, so $e_s\geq 2n-3$.
    Thus $e=e_s$ and $f_4+2f_5+\ldots=0$, hence $b_s=b$, a contradiction.

    Thus without loss of generality, $b < n$.
    Remove the last $b$ vertices $v_{n-b+1},\dots,v_n$ from the boundary of $G_s$ to obtain $G_s'$.
    By induction, $G'$ has at most as many edges $e'$ as the number $e_s'$ of edges of the spiral construction $G_s'$ on $n'$ vertices: $e'\leq e'_s$.

    We next find estimates from above and below of the areas of the boundary polygons of $G$ and $G_s$, which will give us the required contradiction.
    
    We first estimate the angle sum of the boundary polygon of $G$ from below. Each edge in $E(G)\setminus E(G')$ can be assigned to a unique angle of a bounded face of $G$ that is removed when we remove the boundary, with the possible exception of an edge with both endpoints on the boundary of $G$ (a chord of the boundary cycle), which is assigned to two angles (Figure~\ref{fig:angles}).
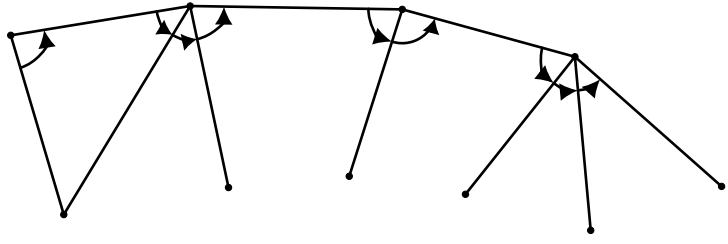
\begin{figure}
\centering
\begin{tikzpicture}[line cap=round,line join=round,scale=0.75]
\draw [line width=1pt] (-8.66,4.82)-- (-5.48,5.34);
\draw [line width=1pt] (-5.48,5.34)-- (-1.72,5.28);
\draw [line width=1pt] (-1.72,5.28)-- (1.34,4.44);
\draw [line width=1pt] (1.34,4.44)-- (3.94,2.14);
\draw [line width=1pt] (1.34,4.44)-- (1.62,1.36);
\draw [line width=1pt] (1.34,4.44)-- (-0.6,2);
\draw [line width=1pt] (-1.72,5.28)-- (-2.66,2.32);
\draw [line width=1pt] (-5.48,5.34)-- (-7.72,1.64);
\draw [line width=1pt] (-7.72,1.64)-- (-8.66,4.82);
\draw [line width=1pt] (-5.48,5.34)-- (-4.8,2.12);
\draw [shift={(-8.66,4.82)},-{Latex[scale=0.8, length=3mm, width=3mm]},line width=1pt] (-73.53245320699729:0.6) arc (-73.53245320699729:9.286927512496993:0.6);
\draw [shift={(-5.48,5.34)},-{Latex[scale=0.8, length=2.5mm, width=3mm]},line width=1pt] (-170.713072487503:0.6) arc (-170.713072487503:-121.1909392412187:0.6);
\draw [shift={(-5.48,5.34)},-{Latex[scale=0.8, length=2.5mm, width=3mm]},line width=1pt] (-121.19093924121873:0.6) arc (-121.19093924121873:-78.07547606238809:0.6);
\draw [shift={(-1.72,5.28)},-{Latex[scale=0.8, length=3mm, width=3mm]},line width=1pt] (179.08578323955103:0.6) arc (179.08578323955103:252.38183994942503:0.6);
\draw [shift={(-5.48,5.34)},-{Latex[scale=0.8, length=3mm, width=3mm]},line width=1pt] (-78.07547606238808:0.6) arc (-78.07547606238808:-0.9142167604489821:0.6);
\draw [shift={(-1.72,5.28)},-{Latex[scale=0.8, length=2.5mm, width=3mm]},line width=1pt] (-107.61816005057503:0.6) arc (-107.61816005057503:-15.35013649242442:0.6);
\draw [shift={(1.34,4.44)},-{Latex[scale=0.8, length=3mm, width=3mm]},line width=1pt] (164.6498635075756:0.6) arc (164.6498635075756:231.51242302928787:0.6);
\draw [shift={(1.34,4.44)},-{Latex[scale=0.8, length=3mm, width=3mm]},line width=1pt] (-128.48757697071213:0.6) arc (-128.48757697071213:-84.80557109226518:0.6);
\draw [shift={(1.34,4.44)},-{Latex[scale=0.8, length=3mm, width=3mm]},line width=1pt] (-84.8055710922652:0.6) arc (-84.8055710922652:-41.49646835521555:0.6);
\draw [fill=black] (-8.66,4.82) circle (1.6pt);
\draw [fill=black] (-5.48,5.34) circle (1.6pt);
\draw [fill=black] (-1.72,5.28) circle (1.6pt);
\draw [fill=black] (1.34,4.44)  circle (1.6pt);
\draw [fill=black] (3.94,2.14)  circle (1.6pt);
\draw [fill=black] (1.62,1.36)  circle (1.6pt);
\draw [fill=black] (-0.6,2)     circle (1.6pt);
\draw [fill=black] (-2.66,2.32) circle (1.6pt);
\draw [fill=black] (-7.72,1.64) circle (1.6pt);
\draw [fill=black] (-4.8,2.12)  circle (1.6pt);
\end{tikzpicture}
\caption{The arrow indicates which edge is associated to an angle}\label{fig:angles}
\end{figure}

    Thus the total number of these angles is at least $e-e'$, which gives that the angle sum of the boundary polygon $B$ of $G$ is at least $(e-e')(2\pi/q)$, and it follows from the Gauss--Bonnet formula (Lemma~\ref{lem:Gauss-Bonnet}) that the area of this polygon satisfies
    \[ \area(B)\leq (b-2)\pi - (e-e')(2\pi/q).\]

   Next, we bound the angle sum of the boundary polygon of the spiral construction $G_s$ from above.
   The total number of angles at the $b$ vertices $v_{n-b+1},\dots,v_n$ that are removed from $G_s$ is at most  $e_s-e_s'-1$.
   This can be seen as follows.
    Since $b < b_s$, we do not remove the whole boundary of $G_s$, and as above, by pairing up boundary angles to edges that are removed, it follows that there are at least $e_s-e_s'-1$ angles.

    Each boundary vertex of $G_s$ that is not removed, has degree at most $4$ in $G_s$ (see \cite[Proposition 3.1.]{MRT23}),
    with the exception of the boundary vertex $v_m$ ($m \leq n-b$) adjacent to $v_n$, which has degree at most $q-1$.
    Thus the angle sum of the boundary polygon $B_s$ of $G_s$ is at most
    \[(e_s-e_s'-1)(2\pi/q) + (b_s-b-1)(6\pi/q) + (q-2)(2\pi/q) = (e_s-e_s'+3b_s-3b+q-6)(2\pi/q).\]
    By Gauss--Bonnet, the area of $G_s$ is bounded below as follows:
    \[\area(B_s) \geq (b_s-2)\pi - (e_s-e_s'+3b_s-3b+q-6)(2\pi/q).\]
    Then
    \begin{equation}\label{eq:areadiff1}
    \area(B)-\area(B_s) \leq (b-b_s)(\pi-6\pi/q) + (e_s-e+e'-e_s'+q-6)(2\pi/q) \leq (1-8/q)\pi,
    \end{equation}
    where the last inequality follows from $b < b_s$, $e_s < e$, and $e'\leq e_s'$.

    We now use Lemma~\ref{lemma:area:ngons} to estimate the two areas.
    First of all, since $G_s$ consists only of triangles, we get the equality
    \[\area(B_s)=(2n-2-b_s)(\pi-6\pi/q),\]
    while for $G$ we obtain the lower bound
    \begin{equation*}
    \area(B) \geq  (2n-2-b)(\pi-6\pi/q).
    \end{equation*}
    Thus,
    \[\area(B)-\area(B_s) \geq (b_s-b)(\pi-6\pi/q)\geq(1-6/q)\pi,\]
    which contradicts \eqref{eq:areadiff1}.
\end{proof}

We next show that an argument similar to the previous proof shows that the spiral construction gives the maximum number of edges among all subgraphs of the $\{p,q\}$-tiling of a fixed number of vertices, for any $p,q\geq 3$ satisfying $(p-2)(q-2) > 4$.
A weaker, asymptotic version of this was shown by Higuchi and Shirai \cite{HS03} and H\"aggstr\"om, Jonasson and Lyons \cite{HJL02}.
A statement equivalent to this was shown by Rold\'an and Toal\'a-Enr\'{\i}quez \cite{RT22}.
Our proof is again simpler.

\begin{lemma}
Consider a cycle $C=v_1\dots v_k$ in the $\{p,q\}$-tesselation, and let $e_i$ be the number of edges in the interior of the cycle emanating from $v_i$.
Then the number of faces inside $C$ is \[\frac{(k-2)(q-2)-4-2\sum_{i=1}^k e_i}{(p-2)(q-2)-4}.\]
\end{lemma}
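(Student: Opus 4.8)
The plan is to count faces, edges, and vertices of the subcomplex $D$ of the $\{p,q\}$-tesselation bounded by the cycle $C$ (including $C$ itself), and feed these into Euler's formula. Write $F$ for the number of faces strictly inside $C$ (which is what we want), $V$ for the total number of vertices of $D$ (those on $C$ plus those strictly inside), and $E$ for the total number of edges of $D$. Since $D$ is a finite connected planar complex whose outer boundary is the cycle $C$, Euler's formula gives $V - E + F = 1$ (counting only the $F$ bounded faces, i.e.\ excluding the outer face). So the whole problem reduces to expressing $V$ and $E$ in terms of $k$, the $e_i$, and the parameters $p,q$.

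First I would handle the edge count via a face–edge incidence. Every bounded face of $D$ is a $p$-gon, so summing edges over the $F$ interior faces counts each interior edge (an edge of $D$ not on $C$) twice and each boundary edge (an edge of $C$) once; hence $pF = 2(E - k) + k = 2E - k$, giving $E = (pF + k)/2$. Next I would count vertices by a vertex–edge incidence restricted to interior edges. The number of interior edges is $E - k = (pF - k)/2$, and the sum of their endpoint-multiplicities is $2(E-k) = pF - k$. On the other hand, this same sum equals $\sum_{i=1}^k e_i$ (the contribution from the $k$ vertices of $C$) plus $\sum_{w} \deg(w)$ over interior vertices $w$; but every interior vertex has full degree $q$ in the tesselation and all its incident edges are interior edges of $D$, so that contribution is $q(V - k)$. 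Thus $q(V-k) = pF - k - \sum_i e_i$, which solves for $V$ in terms of $F$.

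Substituting $E = (pF+k)/2$ and $V = k + \bigl(pF - k - \sum_i e_i\bigr)/q$ into $V - E + F = 1$ yields a single linear equation in $F$. Collecting the coefficient of $F$ gives $\tfrac{p}{q} - \tfrac{p}{2} + 1 = \tfrac{2p - pq + 2q}{2q} = -\tfrac{(p-2)(q-2)-4}{2q}$, and collecting the constants gives $k - \tfrac{k}{q} - \tfrac{\sum_i e_i}{q} - \tfrac{k}{2} - 1 = \tfrac{(k-2)(q-2) - 4 - 2\sum_i e_i}{2q}$ after simplification. Equating and cancelling the common factor $1/(2q)$ produces exactly
\[
F = \frac{(k-2)(q-2) - 4 - 2\sum_{i=1}^k e_i}{(p-2)(q-2)-4},
\]
as claimed. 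The one genuine subtlety — the step I would be most careful about — is the vertex count: it relies on the fact that every vertex of $D$ strictly inside $C$ has all $q$ of its tesselation-edges lying inside (or on) $C$, so that none of them is "cut off," and that no such vertex lies on $C$; this is where the hypothesis that $C$ is a cycle of the tesselation (a simple closed curve along edges, bounding a genuine disc of faces) is used. Everything else is routine bookkeeping with Euler's formula and the two incidence counts.
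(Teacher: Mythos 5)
Your proof is correct, but it takes a genuinely different route from the paper. The paper's argument is metric: each tile is a regular $p$-gon with angles $2\pi/q$ and hence area $(p-2)\pi - 2\pi p/q$ by Gauss--Bonnet, the interior angle of $C$ at $v_i$ is $(e_i+1)\cdot 2\pi/q$ so the polygon bounded by $C$ has area $(k-2)\pi - \tfrac{2\pi}{q}\sum_i(e_i+1)$, and the face count is the ratio of the two areas. Your argument is purely combinatorial: Euler's formula $V-E+F=1$ for the disc bounded by $C$, the face--edge incidence $pF = 2E-k$, and the vertex--edge incidence $q(V-k) = pF - k - \sum_i e_i$ for interior edges; I checked the algebra and both the coefficient of $F$ and the constant term simplify as you claim, yielding the stated formula. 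What each buys: the paper's proof is a two-line computation but leans on the hyperbolic structure (constant curvature, congruent regular tiles, Gauss--Bonnet); yours is longer but works for any abstract simply-connected $\{p,q\}$-complex with no metric input, and it makes explicit the one genuine hypothesis both proofs need, namely that $C$ bounds a disc of faces so that every interior vertex has all $q$ of its edges inside. Both arguments degenerate exactly when $(p-2)(q-2)=4$ (zero tile area in the paper's version, vanishing coefficient of $F$ in yours), consistent with the standing assumption $(p-2)(q-2)>4$.
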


\begin{proof}
In a $\{p,q\}$-tessellation, each face is a regular $p$-gon with angles $\frac{2\pi}{q}$. Such a $p$-gon has area $(p-2)\pi-p\frac{2\pi}{q}$. The interior angle of $C$ at the vertex $v_i$ is $(e_i+1)\frac{2\pi}{q}$, as the number of regular $p$-gons meeting at $v_i$ is $e_i+1$. Hence, the area of the $k$-gon $C$ is $(k-2)\pi-\frac{2\pi}{q}\sum_{i=1}^k(e_i+1)$. The number of faces is the ratio of the area of $C$ and the area of a regular $p$-gon. Dividing the two areas concludes the proof.
\end{proof}

\begin{theorem}
Let $G$ be a subgraph of the $\{p,q\}$-tesselation with $n$ vertices, where $p > 3$.
Then the number of edges of $G$ is at most the number of edges of the spiral graph $G_{p,q}(n)$.
\end{theorem}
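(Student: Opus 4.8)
The plan is to adapt the proof of Theorem~\ref{thm:bowen} from triangles to $p$-gons, using the face-counting lemma above in place of Euler's formula for triangulations, and a $p$-gon version of Lemma~\ref{lemma:area:ngons} in place of Oler's inequality. First I would reduce to the case that $G$ is an induced subgraph of $G_{p,q}$ that is $2$-connected. Passing to the induced subgraph on the same vertex set only adds edges; and the reduction to $2$-connectivity --- which in Bowen's argument used the freedom to slide a disconnected piece or to rotate a block around a cut vertex, moves unavailable inside a \emph{fixed} tessellation --- I would instead carry out by induction on $n$: a cut vertex splits $G$ into subgraphs of $G_{p,q}$ on $a$ and $b$ vertices with $a+b=n+1$, and the inductive bound together with the inequality $e(G_{p,q}(a))+e(G_{p,q}(b))\le e(G_{p,q}(a+b-1))$ closes this case (the disconnected case is weaker). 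That auxiliary inequality I would derive from the recursive structure of the spiral, in the spirit of Proposition~\ref{prop:recursion}.

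So let $G$ be induced and $2$-connected with outer boundary cycle $C$ of length $b$, and suppose, for contradiction, that $G$ has more edges than the spiral $G_s:=G_{p,q}(n)$. Since every bounded face of $G$ is a simply connected union of at least one tessellation $p$-gon, it has at least $p$ edges, so counting edge--face incidences and applying Euler's formula gives $(p-2)e\le p(n-1)-b$. For $G_s$, all of whose bounded faces are single $p$-gons (an easy consequence of the construction, which I would record separately), the same count gives $(p-2)e_s=p(n-1)-b_s$, where $b_s$ is the length of its boundary. Combining the two, $e>e_s$ forces $b<b_s$.

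Now I would derive a contradiction from $b<b_s$ and $e>e_s$ using area estimates, exactly as in Bowen's proof. Writing $A_p$ for the area of a prototile $p$-gon: by the face-counting lemma applied to the full $1$-skeleton inside $C$, the region bounded by $C$ contains $K=\frac{2(\hat n-1)-b}{p-2}$ tessellation $p$-gons, where $\hat n\ge n$ is the number of tessellation vertices in the closed region, so $\area(G)=K A_p\ge\frac{2(n-1)-b}{p-2}A_p$ (this is the $p$-gon analogue of Lemma~\ref{lemma:area:ngons}), while $\area(G_s)=\frac{2(n-1)-b_s}{p-2}A_p$ exactly; hence $\area(G)-\area(G_s)\ge\frac{b_s-b}{p-2}A_p\ge\frac{A_p}{p-2}=\pi-\frac{2p\pi}{q(p-2)}$. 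For the opposite direction, remove the boundary vertices of $G$ to get $G'$ with $n-b$ vertices and $e'$ edges; by induction $e'\le e(G_{p,q}(n-b))$, and $G_{p,q}(n-b)$ is precisely $G_s$ with its last $b$ vertices deleted. Gauss--Bonnet applied to $C$, together with the charging (as in Bowen's proof) of each edge of $E(G)\setminus E(G')$ to a distinct $p$-gon corner at a boundary vertex, bounds $\area(G)$ above in terms of $e-e'$ and $b$; the analogous lower bound for $\area(G_s)$ uses the same charging for its removed boundary vertices, a uniform degree bound for its retained boundary vertices (a $p$-gon analogue of \cite[Proposition 3.1]{MRT23}), and a separate estimate at the ``head''. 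Feeding in $e'\le e(G_{p,q}(n-b))$, $e_s<e$ and $b<b_s$, the difference of the two estimates bounds $\area(G)-\area(G_s)$ above by $\pi-\frac{8\pi}{q}$ when $b_s-b=1$, and by less when $b_s-b\ge 2$ provided $q$ is not too small; since $\frac{2p}{p-2}<8$ for all $p\ge 3$, this is strictly smaller than $\pi-\frac{2p\pi}{q(p-2)}$, the desired contradiction.

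The steps I expect to be the real work are the $2$-connectivity reduction, which genuinely differs from Bowen's setting because of the rigidity of the tessellation and rests on the superadditivity-type inequality for spiral edge counts, and the verification of the structural facts about $G_{p,q}(n)$ for $p>3$ --- that all its bounded faces are single $p$-gons and that its boundary vertices, away from the head, have uniformly bounded degree --- which are intuitively obvious but were quoted rather than proved in the triangular case. The remaining estimates run parallel to those in the proof of Theorem~\ref{thm:bowen}, and the final numerical comparison should go through for every admissible hyperbolic $\{p,q\}$, with the $b_s-b\ge 2$ subcase (which does not arise for $p=3$) needing a sharper area or degree estimate when $q$ is small, for instance a better lower bound on the number of $p$-gon corners along the boundary of $G$.
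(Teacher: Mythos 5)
Your proposal follows essentially the same route as the paper's proof: reduce to an induced $2$-connected subgraph, deduce $b<b_s$ from the Euler/face-incidence count $(p-2)e\le p(n-1)-b$, and then play a $p$-gon analogue of Lemma~\ref{lemma:area:ngons} (counting prototiles inside each bounded face) against a Gauss--Bonnet/charging/induction upper bound. Even your cut-vertex reduction matches in substance: the paper proves your superadditivity inequality geometrically, by gluing two spirals along a shared boundary edge to obtain a subgraph of the tessellation on $n-1$ vertices with at least $e(G)-1$ edges, and then invoking the induction hypothesis.

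One concrete correction to your final numerics. For $p\ge 4$ the degree bounds change in both directions: a retained boundary vertex of the spiral has degree at most $3$ (not $4$), but the ``head'' vertex has degree up to $q$ (not $q-1$). Running the paper's computation with these constants gives $\area(B)-\area(B_s)\le \pi-\frac{6\pi}{q}$, not your $\pi-\frac{8\pi}{q}$, and the comparison against the lower bound $\area(B)-\area(B_s)\ge\pi-\frac{2p\pi}{q(p-2)}$ becomes $\frac{2p}{p-2}<6$, which is \emph{exactly} equivalent to $p>3$. The fact that your version of the comparison, $\frac{2p}{p-2}<8$, never uses the hypothesis $p>3$ is the tell-tale sign that a constant has been carried over from the triangular case. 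Your caution about $b_s-b\ge 2$ when $q$ is small is well placed and, if anything, sharper than the paper: for $q=3$ the coefficient of $b_s-b$ in the combined inequality is negative, so $b_s-b=1$ is not the worst case, and one needs an extra ingredient such as the relation $b_s-b\ge(p-2)(e(G)-e(G_s))$ coming from the two Euler counts; the paper's proof passes over this point.
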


\begin{proof}
Let $G_s=G_{p,q}(n)$ be the subgraph of the $\{p,q\}$-tesselation with $n$ vertices obtained from the spiral construction. 
We use induction on $n$ to show that $e(G)\leq e(G_s)$, with the statement clearly true for $n \leq p$.
Assume that $G$ satisfies $e(G) > e(G_s)$. 
Without loss of generality, $G$ is an induced subgraph of the $\{p,q\}$-tiling.

If $G$ is not connected, we can increase the number of edges by moving a component closer to another component until a new edge is created.
If $G$ is not $2$-connected, then $G=G_1\cup G_2$ with $G_1\cap G_2$ consisting of a single point.
We replace $G_1$ and $G_2$ with spiral constructions $S_1$ and $S_2$ with the same number of vertices as $G_1$ and $G_2$, respectively.
On each $S_i$ there exist $3$ adjacent points $a_i, b_i, c_i$ on the boundary such that $S_i$ is contained in the angle $\angle a_i b_i c_i$.
We can then move $S_2$ such that the edges $a_1b_1$ and $a_2b_2$ are identified, to give a construction on $n-1$ vertices with $e(G)-1$ edges.
Thus, $e(G)-1\leq e(G_s^-)$, the number of edges of the spiral construction $G_s^-$ with $n-1$ vertices.
Since $e(G_s)\geq e(G_s^-)+1$, we have shown that $e(G)\leq e(G_s)$, contradicting our assumption.

Thus, $G$ is $2$-connected.
We next compare $G$ with the spiral construction $G_s$ on $n$ vertices.
By the Euler formula and double counting, we obtain $e(G)\leq \frac{p}{p-2}(n-1)-\frac{b}{p-2}$, where $b$ is the length of the boundary polygon $B$ of $G$.
Similarly, because all bounded faces of $G_s$ are $p$-gons, we have $e(G_s) = \frac{p}{p-2}(n-1) -\frac{b_s}{p-2}$, where $b_s$ is the length of the boundary of $G_s$.
Since $e(G) > e(G_s)$ by assumption, we get $b < b_s$.

Since $\alpha_{p}=2\pi/q$ is the angle of a face of the $\left\{p,q\right\}$-tiling,
the area of the boundary polygon $B_s$ of $G_s$ is
\begin{equation}\label{eq:Gs-area}
\area\left(B_s\right)=\left(\left(p-2\right)\pi-p\cdot\alpha_{p,q}\right)\left(e(G_s)-n+1\right)=\left(\left(p-2\right)\pi-p\cdot\alpha_{p,q}\right)\left(\tfrac{2}{p-2}(n-1) -\tfrac{b_s}{p-2}\right)
\end{equation}
from the Euler formula and the Gauss--Bonnet formula (Lemma~\ref{lem:Gauss-Bonnet}).

Now consider $G$. We remove its boundary to obtain the graph $G'$. with $n-b$ vertices and $e'$ edges. Let $G'_s$ be the graph corresponding to the spiral construction on $n-b$ vertices. Then, by the induction hypothesis, we have $e(G'_s)\geq e(G')$. We also give the following lower bound for the area of the boundary polygon $B_s$ of $G_s$ using similar observations as in the proof of Theorem~\ref{thm:bowen}. In this case, since $p\geq 4$, each boundary vertex that is not removed has degree at most $3$, except from the boundary vertex $v_m$ ($m\leq n-b$) adjacent to $v_n$ that has degree at most $q$. Hence, the angle sum of $B_s$ is at most
\[
\left(e(G_s)-e(G'_s)-1\right)\cdot\alpha_{p,q}+\left(b_s-b-1\right)\cdot 2\alpha_{p,q}+\left(q-1\right)\alpha_{p,q},
\]
thus
\begin{equation}\label{eq:Gs-area-upperbound}
\area\left(B_s\right)\geq\left(b_s-2\right)\pi-\left(\left(e(G_s)-e(G'_s)-1\right)+2\left(b_s-b-1\right)+\left(q-1\right)\right)\cdot\alpha_{p,q}.
\end{equation}
We also observe that $\left(e\left(G\right)-e\left(G'\right)\right)\alpha_{p,q}$ is a lower bound for the angle sum of the boundary polygon $B$ of $G$, as it has at least $e\left(G\right)-e\left(G'\right)$ vertices and each of its angles is a multiple of $\alpha_{p,q}$.
Then,
\begin{equation}\label{eq:G-area-lowerbound}
\area\left(B\right)\leq\left(b-2\right)\pi-\left(e\left(G\right)-e\left(G'\right)\right)\alpha_{p,q}.
\end{equation}
We also want to give a lower bound for $\area\left(B\right)$. To do that, we observe that a $k$-face has at least $\frac{k-2}{p-2}$ copies of the regular $p$-gon, hence
\begin{equation}\label{eq:G-area-upperbound}
\area\left(B\right)\geq\left(\pi-\frac{p}{p-2}\alpha_{p,q}\right)\sum_{k\geq p}\left(k-2\right)f_k,
\end{equation}
where $f_k$ denotes the number of $k$-faces in $G$. We have
\[
\sum_{k\geq p}f_k=1+e\left(G\right)-n
\]
from Euler's formula, while
\[
\sum_{k\geq p}k\cdot f_k=2e\left(G\right)-b,
\]
and hence
\begin{equation*}\label{eq:sumfk}
\sum_{k\geq p}(k-2)\cdot f_k=2n-b-2,
\end{equation*}
which we use to rewrite \eqref{eq:G-area-upperbound} as
\begin{equation}\label{eq:G-area-upperbound2}
\area\left(B\right)\geq\left(\pi-\frac{p}{p-2}\alpha_{p,q}\right)(2n-b-2).
\end{equation}
From \eqref{eq:Gs-area-upperbound} and \eqref{eq:G-area-lowerbound}, we deduce
\begin{equation}\label{eq:GGs-difference-lowerbound}
\area\left(B_s\right)-\area\left(B\right)\geq\left(b_s-b\right)\left(\pi-2\alpha_{p,q}\right)+\left(e\left(G\right)-e\left(G_s\right)+e\left(G'_s\right)-e\left(G'\right)+4-q\right)\alpha_{p,q}.
\end{equation}
On the other hand, from \eqref{eq:Gs-area} and \eqref{eq:G-area-upperbound2}, we have
\begin{equation}\label{eq:GGs-difference-upperbound}
\area\left(B_s\right)-\area\left(B\right)\leq\left(\pi-\frac{p}{p-2}\alpha_{p,q}\right)(b-b_s)\leq\frac{p}{p-2}\alpha_{p,q}-\pi,
\end{equation}
where the last inequality comes from the assumption $b<b_s$. Comparing the inequalities \eqref{eq:GGs-difference-lowerbound} and \eqref{eq:GGs-difference-upperbound}, we get $p\leq 3$, a contradiction.
\end{proof}

\section{Upper bounds}\label{section:upper-bound}
The following induction proof (similar to Harborth's proof \cite{Harborth1974} for penny graphs in the Euclidean plane) works for all $d > 0$, and is tight for all $d(k)$, $k\geq 7$.

\begin{proposition}\label{prop:inductive_bound}
For all $d > 0$ and $n\geq 1$,
\[e_{d}(n) \leq \frac{\pi}{\alpha}n - \sqrt{\left(\frac{\pi}{\alpha}-1\right)\left(\frac{\pi}{\alpha}-3\right)n^2+6\left(\frac{\pi}{\alpha}-1\right)n-\frac{2\pi}{\alpha}+3} < \gamma_1(d)n,\]
where $\gamma_1(d)=\frac{\pi}{\alpha}-\sqrt{(\frac{\pi}{\alpha}-1)(\frac{\pi}{\alpha}-3)}$ is the smaller root of the quadratic polynomial $p_\alpha(x)=\alpha x^2-2\pi x+4\pi-3\alpha$.
\end{proposition}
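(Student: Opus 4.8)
The plan is to mimic Harborth's inductive argument, peeling off the boundary of a maximal minimum-distance graph and controlling the resulting change in the number of vertices, edges, and boundary length via the hyperbolic Oler-type inequality (Lemma~\ref{lemma:oler}, equivalently Lemma~\ref{lemma:area:ngons}). Concretely, let $G$ be a minimum-distance graph on $n$ vertices with $e=e_d(n)$ edges. As in the proof of Theorem~\ref{thm:bowen}, we may assume $G$ is $2$-connected: disconnected or $1$-connected configurations can be rigidly moved to create an extra edge without violating the minimum distance, contradicting maximality. Let $b$ be the length of the boundary polygon $B$. The first step is to establish the base inequality relating $e$, $n$, and $b$: Lemma~\ref{lemma:2connected} gives $e\le 3n-3-b$ (discarding the nonnegative correction for faces with $\ge 4$ edges), while Lemma~\ref{lemma:area:ngons} together with Gauss--Bonnet gives a second relation. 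Since $\area(B)\ge (2n-2-b)A(d)$ with $A(d)=\pi-3\alpha$, and by Gauss--Bonnet $\area(B)=(b-2)\pi-\Sigma$ where $\Sigma$ is the angle sum of $B$ and each interior angle is a positive multiple of $\alpha$ — actually I would instead bound the angle sum from below in terms of $e-e'$ after removing the boundary, exactly as in the Bowen proof — we obtain a clean inequality of the form $e \le (\pi/\alpha)\cdot(\text{something linear in } n,b)$.

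The core of the argument is an induction on $n$. Remove the $b$ boundary vertices of $G$ to get $G'$ on $n'=n-b$ vertices with $e'$ edges; the edges of $G$ incident to the removed boundary split into those lost entirely (at least $e-e'$ of them, each chargeable to a distinct interior angle of $B$, with chords charged to two angles as in Figure~\ref{fig:angles}) so that the angle sum of $B$ is at least $(e-e')\alpha$, hence by Gauss--Bonnet $\area(B)\le (b-2)\pi-(e-e')\alpha$. Combining with the lower bound $\area(B)\ge(2n-2-b)(\pi-3\alpha)$ from Lemma~\ref{lemma:area:ngons} yields an inequality $e-e' \le \frac{\pi}{\alpha}b - (\text{linear in }n,b)$, i.e. a bound on how many edges can be ``gained'' by the outermost layer. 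Feeding in the inductive hypothesis $e'\le f(n')$ where $f(m)=\frac{\pi}{\alpha}m-\sqrt{(\frac{\pi}{\alpha}-1)(\frac{\pi}{\alpha}-3)m^2+6(\frac{\pi}{\alpha}-1)m-\frac{2\pi}{\alpha}+3}$, we must verify $e\le f(n)$. Writing $\beta=\pi/\alpha$ and $Q(m)=(\beta-1)(\beta-3)m^2+6(\beta-1)m-2\beta+3$ so that $f(m)=\beta m-\sqrt{Q(m)}$, the inductive step reduces to showing that the gain $e-e'$ allowed by the geometry is at most $f(n)-f(n')=\beta b-\sqrt{Q(n)}+\sqrt{Q(n')}$, i.e. to a one-variable inequality in $b$ (with $n$ and $n'=n-b$) which one checks by isolating the square roots, squaring, and confirming the resulting polynomial inequality holds for all admissible $b$ in the range $3\le b\le n$. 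The function $f$ is designed precisely so that $f$ is ``self-propagating'' under this recursion; the appearance of the smaller root $\gamma_1(d)$ of $p_\alpha(x)=\alpha x^2-2\pi x+4\pi-3\alpha$ comes from the fixed-point/linearization of the recursion $e\le\beta b + e'$, $b\le$ (decreasing function of $n$), which is exactly the hyperbolic analogue of Harborth's $3n-\sqrt{12n-3}$.

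The base cases $n\le 3$ (or up to some small threshold where the formula must be checked directly, since for small $n$ the graph may have $b=n$) are handled separately: when $b=n$, Lemma~\ref{lemma:2connected} gives $e\le 2n-3$, and one verifies $2n-3\le f(n)$ directly using $\beta>3$ (valid since $\alpha<\pi/3$ for all $d>0$). The final strict inequality $f(n)<\gamma_1(d)n$ is immediate because $Q(n)>(\beta-1)(\beta-3)n^2=(\gamma_1 - \beta)^2 n^2$ times... more precisely $\sqrt{Q(n)}>\sqrt{(\beta-1)(\beta-3)}\,n=(\beta-\gamma_1)n$ since the lower-order terms $6(\beta-1)n-2\beta+3$ are positive for $n\ge1$ (as $\beta>1$). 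The main obstacle I anticipate is the bookkeeping in the inductive step: getting the constants in $Q(m)$ exactly right so that the squared polynomial inequality is genuinely true for the full range of $b$, and correctly handling the edge cases where $n'<$ the threshold (so the inductive hypothesis must be replaced by a base-case estimate) and where $B$ has chords (which improve, rather than worsen, the angle-sum bound, so they are harmless but must be acknowledged). A secondary subtlety is ensuring the chord-to-two-angles charging does not double-count in a way that breaks the bound; as in Bowen's proof this only helps, since a chord contributes to two angles of $B$ while being removed only once.
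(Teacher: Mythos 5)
Your proposal follows essentially the same route as the paper's proof: establish $2$-connectivity by maximality, split into the case where $b$ is large (so the planar bound $e\le 3n-3-b$ from Lemma~\ref{lemma:2connected} suffices) and the case where $b$ is small (so the boundary can be peeled off), bound the number of removed edges via the angle sum of the boundary polygon combined with Gauss--Bonnet and the area lower bound of Lemma~\ref{lemma:area:ngons}, and close the induction by an algebraic verification that $f(m)=\frac{\pi}{\alpha}m-s(m)$ is self-propagating. The only difference is cosmetic: the paper disposes of the final polynomial inequality by observing that the left-hand side is increasing in $b$ and checking equality at the threshold value $b=(3-\frac{\pi}{\alpha})n-3+s(n)$, rather than verifying it over the whole admissible range as you suggest.
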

\begin{proof}
Let \[s(x)=\sqrt{\left(\frac{\pi}{\alpha}-1\right)\left(\frac{\pi}{\alpha}-3\right)x^2+6\left(\frac{\pi}{\alpha}-1\right)x-\frac{2\pi}{\alpha}+3}.\]
Let $G$ be an $n$-vertex penny graph with distance $d$ with maximum number $e$ of edges. It can be easily checked
that if $n=1,2,3,4$, then $e=0,1,3,5$, and these numbers are bounded above by
$\frac{\pi}{\alpha}n - s(n)$.
By maximality, $G$ is 2-connected, since otherwise we can manipulate the graph to increase the number of edges, as in the proof of Theorem~\ref{thm:bowen}. From now on, let $n\geq 5$. We assume by induction that the number of edges of a graph on $n'<n$ vertices is bounded above by
$\frac{\pi}{\alpha}n' - s(n')$.
Let the number of edges of the boundary polygon be $b$.
By Lemma \ref{lemma:2connected} we have $e\leq 3n-3-b$ and this will be at most
$\frac{\pi}{\alpha}n - s(n)$
if
$b\geq \left(3-\frac{\pi}{\alpha}\right)n-3+s(n)$.
Thus we may assume without loss of generality that
\[b<\left(3-\frac{\pi}{\alpha}\right)n-3+s(n).\]
Note that
$\left(3-\frac{\pi}{\alpha}\right)n-3+s(n) < n$
(as it is equivalent to the inequality $0<(n-3)^2+\left(\frac{2\pi}{\alpha}-3\right)$ which clearly holds as $\alpha<\frac{\pi}{3}$). Thus if we remove the boundary vertices, there will be vertices left on which we can apply induction. In total, we remove $r$ edges ($b$ edges from the boundary and $r-b$ edges connecting boundary vertices with interior vertices). Then the angle sum of the boundary polygon $B$ is
\[
(b-2)\pi-\area(B)\geq\alpha r.
\]
By Lemma~\ref{lemma:area:ngons},
\[
\area(B)\geq(2n-2-b)(\pi-3\alpha),
\]
so we obtain $r\leq(2b-2n)\frac{\pi}{\alpha}+3(2n-2-b)$. Thus the number of edges
\[
e\leq r+(n-b)\frac{\pi}{\alpha}-s(n-b),
\]
and this will be at most
$\frac{\pi}{\alpha}n - s(n)$
if
\[
\left(\frac{\pi}{\alpha}-3\right)b+\left(6-\frac{2\pi}{\alpha}\right)n-6-s(n-b)\leq -s(n).
\]
Since the left-hand side is increasing in $b$ (as $b<n$ and $0<\alpha<\frac{\pi}{3}$), it is sufficient to observe that if
\[
x=\left(3-\frac{\pi}{\alpha}\right)n-3+s(n),
\]
then
\[
\left(\frac{\pi}{\alpha}-3\right)x+\left(6-\frac{2\pi}{\alpha}\right)n-6-s(n-x)= -s(n),
\]
or equivalently,
\[
\left(\left(\frac{\pi}{\alpha}-3\right)x+\left(6-\frac{2\pi}{\alpha}\right)n-6+s(n)\right)^2= s(n-x)^2,
\]
that can be checked by expanding both sides.
\end{proof}

We conjecture that, just as with the order-$7$ triangular tiling, the maximum number of edges for all distances $d(k)$ is found by rounding the above upper bound.
\begin{conjecture}\label{conjk}
For all $n\geq 1$ and all $k\geq 7$,
\[e_{d(k)}(n) = \left\lfloor\frac{\pi}{\alpha}n - \sqrt{\left(\frac{\pi}{\alpha}-1\right)\left(\frac{\pi}{\alpha}-3\right)n^2+6\left(\frac{\pi}{\alpha}-1\right)n-\frac{2\pi}{\alpha}+3}\right\rfloor.\]
\end{conjecture}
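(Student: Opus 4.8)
Write $\beta(n)$ for the expression under the floor sign in the statement, so the claim is $e_{d(k)}(n)=\lfloor\beta(n)\rfloor$. The inequality $e_{d(k)}(n)\le\beta(n)$ is precisely Proposition~\ref{prop:inductive_bound}, and since $e_{d(k)}(n)$ is an integer this already gives $e_{d(k)}(n)\le\lfloor\beta(n)\rfloor$; so the whole content of the conjecture is the matching lower bound, namely a construction on $n$ vertices with $\lfloor\beta(n)\rfloor$ edges. By Bowen's Theorem~\ref{thm:bowen} an optimal graph is the spiral graph $G_{3,k}(n)$, so the conjecture is equivalent to the assertion that $G_{3,k}(n)$ has exactly $\lfloor\beta(n)\rfloor$ edges. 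As every bounded face of $G_{3,k}(n)$ is a triangle, Lemma~\ref{lemma:2connected} gives $e(G_{3,k}(n))=3n-3-b(n)$, where $b(n)$ is the length of its boundary polygon; hence the problem is to compute $b(n)$ exactly and to verify $3n-3-b(n)=\lfloor\beta(n)\rfloor$ for all $n\ge1$.

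The first step is to describe $e_{d(k)}(n)$ recursively, generalizing Proposition~\ref{prop:recursion} from $k=7$ to all $k\ge7$. Following the clockwise insertion rule of the spiral and using the boundary-degree facts from the proof of Theorem~\ref{thm:bowen} (a boundary vertex of the spiral that has not yet been enclosed has degree at most $4$ in the plane graph, while the vertex most recently added has degree at most $k-1$), one should obtain that the difference word $\bigl(\Delta e_{d(k)}(n)\bigr)_{n\ge0}$, where $\Delta e_{d(k)}(n)=e_{d(k)}(n+1)-e_{d(k)}(n)$ counts how many already-present vertices the $(n{+}1)$st vertex is joined to in the spiral, is the fixed point of the substitution $a\mapsto 2^{\,k-3-a}\,3$ started from the word $012$ (exactly as for $k=7$), so that only the first two letters lie outside $\{2,3\}$. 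Summation then yields $e_{d(k)}(n)=\sum_{i<n}\Delta e_{d(k)}(i)$ and $b(n)=3n-3-e_{d(k)}(n)$.

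Next I would analyze this word at its natural breakpoints, the values $n_1<n_2<\cdots$ at which the spiral has just closed a complete ring around the central vertex. These obey a second-order linear recurrence governed by the incidence matrix of the substitution, the order-$k$ analogue of the description $n=7F_{2k+1}-6$ used in the proof of Proposition~\ref{prop:small-values}, and at these $n_j$ I expect the clean unrounded identity $e_{d(k)}(n_j)=\beta(n_j)$ to hold, just as the computation $e=\tfrac72 n-\tfrac12\sqrt{5(n+6)^2-14^2}=\tfrac72 n-\sqrt{\tfrac54 n^2+15n-4}$ does for $k=7$. Concretely, one verifies that the pairs $(n_j,e_{d(k)}(n_j))$ satisfy the conic relation obtained by squaring $\beta$, namely $\bigl(\tfrac{\pi}{\alpha}n-e\bigr)^2=\bigl(\tfrac{\pi}{\alpha}-1\bigr)\bigl(\tfrac{\pi}{\alpha}-3\bigr)n^2+6\bigl(\tfrac{\pi}{\alpha}-1\bigr)n-\tfrac{2\pi}{\alpha}+3$; since these pairs satisfy a linear recurrence of order two they automatically lie on some conic, and it remains only to pin down its coefficients using the edge density $c(d(k))=\gamma_1(d(k))$ (Bowen, Theorem~\ref{thm:bowen}) together with one lower-order term. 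This also fixes $b(n)$ at every layer boundary.

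The remaining, and I expect the main, difficulty is to interpolate between consecutive complete layers: one must control $b(n)$ for all intermediate $n$ and show $3n-3-b(n)=\lfloor\beta(n)\rfloor$ there as well. The function $\beta$ is convex (a linear function minus a concave square-root term), whereas $n\mapsto e_{d(k)}(n)$ is piecewise linear with slopes $2$ and $3$ prescribed by the substitution word, and the two agree at the $n_j$; the assertion is that the floor of this convex function coincides with the piecewise-linear function at every integer, i.e.\ that within each layer the spiral never falls a whole edge behind the upper bound, $\beta(n)-e_{d(k)}(n)<1$. I would attack this by strengthening the induction of Proposition~\ref{prop:inductive_bound} so that it simultaneously carries the exact boundary length and shows the spiral attains the smallest integer boundary length consistent with that argument, and then running a within-layer case analysis, driven by the explicit substitution word, to bound the accumulated defect $\beta(n)-e_{d(k)}(n)$ strictly below $1$. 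This last estimate is essentially a statement about the fractional parts of $\beta(n)$ over one period of the layer recurrence, and it is precisely the point I do not see how to close in general — which is why the statement is left as a conjecture, with only the finite range $n\le 366536$ and the complete-layer case verified for $k=7$ in Proposition~\ref{prop:small-values}.
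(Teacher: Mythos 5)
The statement you are addressing is a \emph{conjecture} in the paper: the authors give no proof of it, only partial evidence for $k=7$ (Proposition~\ref{prop:small-values}, which verifies the formula for $n\leq 366536$ and for complete layers $n=7F_{2k+1}-6$). So there is no proof in the paper to compare yours against, and your proposal, as you yourself acknowledge in its final sentence, is not a proof either. Your opening reduction is correct and matches the paper's implicit framing: the inequality $e_{d(k)}(n)\leq\lfloor\beta(n)\rfloor$ is Proposition~\ref{prop:inductive_bound}, Bowen's Theorem~\ref{thm:bowen} reduces the matching lower bound to counting the edges of the spiral graph $G_{3,k}(n)$, and Lemma~\ref{lemma:2connected} converts that to computing the boundary length $b(n)$. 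Your check that the unrounded identity $e=\beta(n)$ holds at complete layers for $k=7$ is consistent with the computation in the proof of Proposition~\ref{prop:small-values}.

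The genuine gaps are the ones you name plus one you pass over quickly. First, the substitution description $a\mapsto 2^{\,k-3-a}3$ of the difference word for general $k$ is asserted by analogy with Proposition~\ref{prop:recursion}; note that even that proposition (the $k=7$ case) is stated in the paper without proof, so this step is doubly unsupported. Second, the claim that the complete-layer pairs $(n_j,e_{d(k)}(n_j))$ satisfy the exact conic relation for general $k$ requires the order-$k$ analogue of the identity $n=7F_{2k+1}-6$, $e=7(3F_{2k+1}-F_{2k}-3)$, which you gesture at via the incidence matrix of the substitution but do not derive; the remark that two-term linear recurrences ``automatically lie on some conic'' needs the recurrence to be the same homogeneous recurrence for both coordinates after an affine shift, which must be exhibited. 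Third and decisively, the within-layer estimate $\beta(n)-e_{d(k)}(n)<1$ for all intermediate $n$ is exactly the open content of the conjecture, and no mechanism is offered for it: the convexity of $\beta$ alone does not control the fractional parts, and the proposed ``strengthened induction carrying the exact boundary length'' is a restatement of the goal rather than a method. Your diagnosis of where the difficulty lies is accurate, but the proposal establishes only the upper bound, which the paper already proves.
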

We now consider the case when $d\neq d(k)$ for all $k\geq 7$.
Consider a minimum-distance graph \(G\) on \(n\) points with edge length \(d\) and \(e=e_d(n)\) edges.
As before, we assume that $G$ is planar and $2$-connected.
From now on, we also assume that \(d\neq d(k)\) for all \(k\geq 7\).
Then \(\alpha(d)\neq 2\pi/k\) for all \(k\geq 7\), and it follows that each interior vertex belongs to a non-triangular face. This, combined with Lemma~\ref{lemma:2connected}, already gives the following estimate.

\begin{proposition}\label{prop:upper-bound1}
If \(d\neq d(k)\) for all \(k\geq 7\), then \(e_d(n) \leq \frac{11}{4}n-\frac34 b-3\). 
\end{proposition}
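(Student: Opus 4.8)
The plan is to exploit the observation, already recorded before the statement, that since $d\neq d(k)$ for all $k\geq 7$ we have $\alpha(d)\neq 2\pi/k$, so no vertex of $G$ can be surrounded entirely by triangular faces; equivalently, every interior vertex of $G$ lies on the boundary of at least one bounded face with at least $4$ edges. From here the argument is a face-counting estimate feeding into Lemma~\ref{lemma:2connected}. First I would set up the notation: let $f_i$ denote the number of bounded faces with $i$ edges, let $b$ be the number of edges on the boundary face, and recall from Lemma~\ref{lemma:2connected} that
\[
e = 3n-3-b-\sum_{i\geq 4}(i-3)f_i.
\]
So to obtain the bound $e\leq \tfrac{11}{4}n-\tfrac34 b-3$ it suffices to show $\sum_{i\geq 4}(i-3)f_i \geq \tfrac14(n-b)$, i.e.\ that the ``excess'' $\sum_{i\geq 4}(i-3)f_i$ is at least a quarter of the number of interior vertices.

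The key step is a charging argument. Each of the $n-b$ interior vertices $v$ borders at least one non-triangular bounded face; fix one such face $F(v)$ for each interior $v$ and let $v$ send a charge of $1$ to $F(v)$. A face $F$ with $i\geq 4$ edges receives charge only from interior vertices on its boundary, of which there are at most $i$, so $F$ receives total charge at most $i$. On the other hand $F$ contributes $i-3$ to the excess sum. The worst ratio of received charge to contributed excess is $i/(i-3)$, which is maximized over $i\geq 4$ at $i=4$, where it equals $4$. Hence
\[
n-b \;=\; \sum_{v\text{ interior}} 1 \;\leq\; \sum_{i\geq 4} i\, f_i \;\leq\; 4\sum_{i\geq 4}(i-3)f_i,
\]
which gives $\sum_{i\geq 4}(i-3)f_i \geq \tfrac14(n-b)$ and, substituting into the Euler-type identity above,
\[
e \;\leq\; 3n-3-b-\tfrac14(n-b) \;=\; \tfrac{11}{4}n - \tfrac34 b - 3,
\]
as required. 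Taking $e=e_d(n)$ (which we may, since we assumed $G$ is an extremal graph and argued it is planar and $2$-connected) completes the proof.

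The main thing to be careful about is the claim that a non-triangular face $F$ receives charge from at most $i$ vertices — this is immediate since only vertices on $\partial F$ can charge to $F$ and $\partial F$ has at most $i$ vertices (exactly $i$ if the boundary walk of $F$ is a simple cycle; in a $2$-connected plane graph all bounded faces are bounded by simple cycles, so it is exactly $i$, but the inequality is all we need). One should also note that the degenerate case where $G$ has no interior vertices ($n=b$) is consistent: then the excess sum can be $0$ and the inequality $e\leq 3n-3-b = 2n-3$ is recovered, while $\tfrac{11}{4}n-\tfrac34 b - 3 = 2n-3$ as well, so the bound holds with equality and nothing is lost. No genuine obstacle arises here; the only subtlety is making sure the worst-case face size in the charging ratio is handled correctly (it is $i=4$, giving the constant $\tfrac14$ and hence the coefficient $\tfrac34$ on $b$).
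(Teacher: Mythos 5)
Your proof is correct and follows essentially the same route as the paper: the identity from Lemma~\ref{lemma:2connected}, the observation that $\alpha\neq 2\pi/k$ forces every interior vertex onto a non-triangular face (so $b+\sum_{i\geq4}if_i\geq n$), and the worst-case ratio at $i=4$ giving $\sum_{i\geq4}(i-3)f_i\geq\frac14(n-b)$. The paper's version is just a more compressed statement of the same charging argument.
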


\begin{proof}
By Lemma~\ref{lemma:2connected}, \(e=3n-3-b-\sum_{i\geq 4} (i-3)f_i\).
Since each interior vertex belongs to a non-triangular face, \(b+\sum_{i\geq 4} if_i\geq n\), and it follows that \(e\leq 3n-3-b-(n-b)/4 \leq \frac{11}{4}n-\frac{3}{4}b -3 \). 
\end{proof}
We can improve this estimate by bounding the area of the boundary polygon from below using Lemma~\ref{lemma:area:ngons} and from above by the area formula if $d$ is large or by the hyperbolic isoperimetric inequality if $d$ is small.

\begin{proposition}\label{prop:upper-bound2}
If \(d\neq d(k)\) for all \(k\geq 7\),
then for all $n\geq 1$,
\[e_d(n) < \min\{\gamma_2(d)n, \gamma_3(d)n\},\]
where
\[\gamma_2(d)=\frac{\pi}{\alpha}-\frac18-\sqrt{\left(\frac{\pi}{\alpha}\right)^2-\frac{17\pi}{4\alpha}+\frac{241}{64}}\]
and
\[\gamma_3(d)=2+\frac{3(d-\pi+3\alpha)}{4(d+\pi-3\alpha)}.\]
\end{proposition}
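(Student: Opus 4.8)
The plan is to take the same 2-connected minimum-distance graph $G$ with $e = e_d(n)$ edges and $b$ boundary edges, and to combine two lower bounds on $\operatorname{area}(B)$ with Proposition~\ref{prop:upper-bound1} to extract two different inequalities relating $e$, $n$ and $b$, from which $b$ can be eliminated. Since $d \neq d(k)$, every interior vertex lies on a non-triangular face, so Lemma~\ref{lemma:area:ngons} is not tight but still gives $\operatorname{area}(B) \geq (2n-2-b)A(d)$ with $A(d) = \pi - 3\alpha$; in fact, since each bounded face that is not a triangle contributes extra area, one expects to get a genuinely better bound by noting that at least $(n-b)/4$ of the face-incidences are non-triangular, but the cleanest route is just to use Lemma~\ref{lemma:area:ngons} as a black box. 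The idea is then: bound $\operatorname{area}(B)$ from above in two ways — by the Gauss--Bonnet area formula (good when $d$ is large, since a long boundary polygon cannot enclose too much area relative to its perimeter), and by the hyperbolic isoperimetric inequality (Lemma~\ref{lemma:isoperimetric}, good when $d$ is small).

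For the $\gamma_3$ bound: the boundary polygon $B$ has $b$ sides, each of length $\geq d$, but also the area is at most something controlled by its perimeter. Concretely, by Gauss--Bonnet $\operatorname{area}(B) = (b-2)\pi - \sum \theta_i$ where the $\theta_i$ are the interior angles; each $\theta_i$ is a sum of at least one face-angle, each face-angle being at least $\alpha$ (since the smallest face-angle among all faces of $G$ is the equilateral-triangle angle $\alpha$), so $\operatorname{area}(B) \leq (b-2)\pi - b\alpha$. But a sharper estimate uses that the perimeter is $\operatorname{per}(B) \geq bd$ together with the isoperimetric-type trade-off; comparing $(2n-2-b)(\pi-3\alpha) \leq \operatorname{area}(B) \leq (b-2)(\pi - \alpha)$ — wait, more carefully, one wants the bound in terms of $d$. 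The right move is: combine $\operatorname{area}(B) \geq (2n-2-b)(\pi-3\alpha)$ with an upper bound of the form $\operatorname{area}(B) \leq \tfrac{d}{?}\cdot(\text{something})$ coming from packing the perimeter, and then feed in $e \leq \tfrac{11}{4}n - \tfrac34 b - 3$ from Proposition~\ref{prop:upper-bound1}. Eliminating $b$ between the area inequality and the edge inequality — $b$ appears linearly in both — yields a linear bound $e < \gamma_3(d) n$; matching the stated coefficient $2 + \frac{3(d-\pi+3\alpha)}{4(d+\pi-3\alpha)}$ fixes exactly which perimeter-to-area estimate to use (namely one with $\operatorname{per}(B) \geq bd$ and $\operatorname{area}(B) \leq $ a multiple of $\operatorname{per}(B)$, or Gauss--Bonnet with the angle bound rewritten via the relation $\cos\alpha = \tanh(d/2)/\tanh d$).

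For the $\gamma_2$ bound: here we use the hyperbolic isoperimetric inequality $L^2 \geq 4\pi A + A^2$ for the boundary curve, with $L = \operatorname{per}(B) \geq bd$ — no wait, for small $d$ we instead want to bound $L$ from above and $A$ from below, or use it the other way. The natural thing: $A = \operatorname{area}(B) \geq (2n-2-b)(\pi-3\alpha)$, and $L = \operatorname{per}(B)$; but we need an upper bound on $L$ in terms of $n$ and $b$, which is not immediate. Instead, I expect the argument runs through an Oler-type inequality (Lemma~\ref{lemma:oler}): $\operatorname{area}(B)/A(d) + \operatorname{per}(B)/d \geq 2n-2$, combined with $\operatorname{per}(B) = bd$ exactly when all boundary edges have length exactly $d$ — but here the boundary perimeter could be larger. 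Actually the slicker path for $\gamma_2$ is to again use Proposition~\ref{prop:upper-bound1}'s $e \leq \tfrac{11}{4}n - \tfrac34 b - 3$ together with a quadratic (rather than linear) relation between $n$ and $b$ obtained from the isoperimetric inequality applied to $B$: one gets $\operatorname{area}(B)$ quadratically large in terms of the "radius", forcing $b$ to be at least of order $\sqrt{n}$, and then substituting $b \gtrsim$ (root expression) into the edge bound produces the square-root term in $\gamma_2$. This is parallel to how $\gamma_1$ arose in Proposition~\ref{prop:inductive_bound}, except using $\tfrac{11}{4}$ in place of $\pi/\alpha$ for the leading term, which is why $\gamma_2$ has $\pi/\alpha - 1/8$ and the specific constants $17\pi/(4\alpha)$, $241/64$.

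The main obstacle is getting the constants exactly right: both $\gamma_2$ and $\gamma_3$ have very specific coefficients, and these come from a delicate interplay between Lemma~\ref{lemma:area:ngons}, Proposition~\ref{prop:upper-bound1}, and either Gauss--Bonnet or the isoperimetric inequality. For $\gamma_3$ the work is a linear elimination of $b$ and is essentially routine once one has the correct upper bound on $\operatorname{area}(B)$ in terms of the perimeter $bd$; the subtlety is justifying that estimate (that a simple polygon with $b$ edges of length $\geq d$, arising as the boundary of a packing-graph, cannot enclose more than roughly $\tfrac14 bd$ worth of "excess" area beyond the triangulated part). For $\gamma_2$, the obstacle is that the isoperimetric inequality is quadratic, so one must carefully check that the resulting quadratic in $b$ has the claimed smaller root, and that the direction of the inequality is preserved when we pass from "$b \geq$ root" to "$e < \gamma_2 n$" — this requires monotonicity of the relevant expression in $b$ over the valid range $b < n$, exactly as in the proof of Proposition~\ref{prop:inductive_bound}. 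I would carry out $\gamma_3$ first (it is the cleaner of the two), then $\gamma_2$, and finally note that the minimum of the two is what the Proposition asserts, with the crossover happening at the value of $d$ quoted in the introduction.
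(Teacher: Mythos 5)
Your plan for $\gamma_3$ is essentially the paper's, though you leave the crucial step vague: the upper bound on $\operatorname{area}(B)$ is simply $\operatorname{area}(B) < \operatorname{per}(B) = bd$, which is an immediate consequence of the hyperbolic isoperimetric inequality $L^2 \geq 4\pi A + A^2 > A^2$ (Lemma~\ref{lemma:isoperimetric}); there is no ``$\tfrac14 bd$ of excess area'' to justify. Combining $bd > (\pi-3\alpha)(2n-2-b)$ with $e \leq \tfrac{11}{4}n-\tfrac34 b-3$ and eliminating $b$ gives exactly $\gamma_3$. Note also that in the hyperbolic plane this forces $b \gtrsim cn$, not $b \gtrsim \sqrt{n}$ as you assert later --- area grows linearly with $n$ and the isoperimetric inequality gives $L > A$, so the Euclidean $\sqrt{n}$ intuition is wrong here.

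The genuine gap is in $\gamma_2$: you attribute the square root to the quadratic form of the isoperimetric inequality, but that route cannot produce the stated formula --- the isoperimetric inequality only yields a \emph{linear} lower bound on $b$ in terms of $n$, hence a linear bound of $\gamma_3$/$\gamma_5$ type, and in fact the paper does not use the isoperimetric inequality for $\gamma_2$ at all. The actual mechanism is the induction scheme of Proposition~\ref{prop:inductive_bound} with Proposition~\ref{prop:upper-bound1} replacing the planarity bound: if $b \geq (\tfrac{11}{3}-\tfrac43\gamma_2(d))n-4$ then $e \leq \tfrac{11}{4}n-\tfrac34 b - 3 \leq \gamma_2(d)n$ directly; otherwise one removes the $b$ boundary vertices, bounds the number $r$ of removed edges via the angle sum of $B$ (each removed edge accounts for an angle of measure at least $\alpha$, so $\alpha r \leq (b-2)\pi - \operatorname{area}(B)$ by Gauss--Bonnet, and $\operatorname{area}(B) \geq (2n-2-b)(\pi-3\alpha)$ by Lemma~\ref{lemma:area:ngons} gives $r \leq (\tfrac{2\pi}{\alpha}-3)b-(\tfrac{2\pi}{\alpha}-6)n-6$), and applies the induction hypothesis $e - r \leq \gamma_2(d)(n-b)$. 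The square root in $\gamma_2$ appears because $\gamma_2(d)$ must be chosen as the smaller root of the quadratic $(\tfrac{2\pi}{\alpha}-3-x)(\tfrac{11}{3}-\tfrac43 x) = \tfrac{2\pi}{\alpha}-6$ that balances the two cases --- exactly parallel to how $\gamma_1$ arises, as you correctly suspected in your final sentence, but with the non-triangular-face bound $e\leq\tfrac{11}{4}n-\tfrac34 b-3$ supplying the large-$b$ case. Without this induction your proposal has no way to reach the constants $\tfrac{17\pi}{4\alpha}$ and $\tfrac{241}{64}$.
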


\begin{proof}
We show the first bound by using induction and Proposition~\ref{prop:upper-bound1}.
Note that $\gamma_2(d)$ is strictly decreasing and in the interval $(2,3)$ as $\alpha$ ranges between $0$ and $\pi/3$.
Let $G$ be an $n$-vertex penny graph with distance $d$ with maximum number $e$ of edges.
Then $G$ is $2$-connected, since otherwise we can manipulate the graph to increase the number of edges.
It can easily be checked that if $n=1, 2, 3, 4$, then $e=0,1,3,5$, respectively, and then $e\leq 2n-2 < \gamma_2(d)n$.
We assume by induction that the number of edges of a graph on $n' < n$ vertices is bounded above by $\gamma_2(d)n'$.
Let the number of edges of the boundary polygon be $b$.
By Proposition~\ref{prop:upper-bound1} we have $e\leq \frac{11}{4}n-\frac34b-3$, and this will be at most $\gamma_2(d)n$ if $b\geq (\frac{11}{3}-\frac43\gamma_1(d))n-4$.
Thus we may assume without loss of generality that $b < (\frac{11}{3}-\frac43\gamma_1(d))n-4$.

Let $r$ be the number of edges of $G$ with at least one endpoint on the boundary polygon $B$.
Observe that we can put these edges in one-to-one correspondence with all of the angles made by two edges with a common endpoint on $B$ and with no edge inside the angle.
Since each one of these angles has measure at least $\alpha$, we obtain that the angle sum of the boundary polygon $B$ is \[(b-2)\pi-\area(B)\geq\alpha r.\]
By Lemma~\ref{lemma:area:ngons}, \[\area(B)\geq(2n-2-b)(\pi-3\alpha),\]
so we obtain an upper bound \[r\leq -(\tfrac{2\pi}{\alpha}-6)n+(\tfrac{2\pi}{\alpha}-3)b-6.\]
If we remove the boundary vertices and their incident edges from $G$, there are $e-r \leq\gamma_2(d)(n-b)$ edges remaining.
Thus the number of edges is bounded above as follows: \[e\leq \gamma_2(d)(n-b)+r\leq \gamma_2(d)(n-b)-(\tfrac{2\pi}{\alpha}-6)n+(\tfrac{2\pi}{\alpha}-3)b-6,\] and this upper bound will be at most $\gamma_2(d)n$ if \[(\tfrac{2\pi}{\alpha}-3-\gamma_2(d))b\leq(\tfrac{2\pi}{\alpha}-6)n+6.\]
Since $b < (\frac{11}{3}-\frac43\gamma_1(d))n-4$, it is sufficient to show that \[(\tfrac{2\pi}{\alpha}-3-\gamma_2(d))(\tfrac{11}{3}-\tfrac43\gamma_2(d))n-4)\leq (\tfrac{2\pi}{\alpha}-6)n+6.\]
This follows from $(\tfrac{2\pi}{\alpha}-3-\gamma_2(d))(\frac{11}{3}-\frac43\gamma_2(d)) = \frac{2\pi}{\alpha}-6$.

For the second bound, we use the isoperimetric inequality which implies that $A < L = bd$.
By Lemma~\ref{lemma:area:ngons}, the area $A$ of the boundary polygon is
\begin{equation}\label{Alb}
A\geq (\pi-3\alpha)(2n-2-b). 
\end{equation}
We obtain $b > \frac{2\pi-6\alpha}{d+\pi-3\alpha}(n-1)$, and we find the upper bound $e\leq \gamma_3(d)n$ as before.
\end{proof}

We next combine both the isoperimetric inequality and the inductive proof with a charging argument to obtain two better upper bounds if we also assume that $d$ is not one of the distances $\overline{d}(k)$ where $k-1$ equilateral triangles and a square fit around a point.

\begin{proposition}\label{prop:upper-bound3}
If \(d\neq d(k)\) for all \(k\geq 7\) and $d\neq \overline{d}(k)$ for all $k\geq 6$, then
\[e_d(n)\leq \min\{\gamma_4(d)n,\gamma_5(d)n\},\] where
\begin{equation}\label{eq:gamma4}
\gamma_4(d) = \frac{\pi}{\alpha} - \frac16 - \sqrt{\left(\frac{\pi}{\alpha}\right)^2-\frac{13\pi}{3\alpha} + \frac{145}{36}}
\end{equation}
and
\[\gamma_5(d) = \frac{8}{3} - \frac{4(\pi-3\alpha)}{3(d+\pi-3\alpha)}.\]
\end{proposition}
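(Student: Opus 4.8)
The plan is to reduce both bounds, just as Proposition~\ref{prop:upper-bound2} is reduced to Proposition~\ref{prop:upper-bound1}, to a single strengthened combinatorial estimate: under the hypotheses, every $2$-connected minimum-distance graph with $n$ vertices, $e$ edges and boundary polygon $B$ of length $b$ satisfies $e\le\tfrac83 n-\tfrac23 b-O(1)$, equivalently (by Lemma~\ref{lemma:2connected}) the defect $D:=\sum_{i\ge4}(i-3)f_i$ is at least $\tfrac13(n-b)-O(1)$; informally, the non-triangular faces must absorb on average at least a third of a unit of defect per interior vertex. Granting this, $\gamma_5$ follows exactly as $\gamma_3$ did: Lemma~\ref{lemma:area:ngons} together with the hyperbolic isoperimetric inequality (Lemma~\ref{lemma:isoperimetric}, giving $\area(B)<\per(B)=bd$) yields $b>\tfrac{2(\pi-3\alpha)}{d+\pi-3\alpha}(n-1)$, and substituting into $e\le\tfrac83 n-\tfrac23 b-O(1)$ gives $e<\gamma_5(d)n$. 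And $\gamma_4$ follows by the same induction used for the $\gamma_2$ bound: assuming $b$ small, remove the boundary vertices, bound the number $r$ of edges meeting the boundary by $r\le-(\tfrac{2\pi}\alpha-6)n+(\tfrac{2\pi}\alpha-3)b-6$ from the Gauss--Bonnet angle-sum estimate (Lemma~\ref{lem:Gauss-Bonnet}) and Lemma~\ref{lemma:area:ngons}, apply the inductive bound to $G$ with its boundary removed, and note that $\gamma_4(d)$ is exactly the root of the quadratic that turns the resulting bookkeeping into an identity (the analogue of $p_\alpha$ in Proposition~\ref{prop:inductive_bound}), so the induction closes.

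The substance is the strengthened defect estimate, and this is where $d\neq\overline{d}(k)$ enters. First I would pin down quadrilateral faces. Since every edge has length exactly $d$, a quadrilateral face $Q=v_1v_2v_3v_4$ is equilateral, and since $v_2,v_4$ each lie on the perpendicular bisector of $v_1v_3$ (being at distance $d$ from both endpoints), and likewise $v_1,v_3$ lie on the perpendicular bisector of $v_2v_4$, the face is symmetric across both diagonals; hence opposite angles are equal, $\theta_1=\theta_3$ and $\theta_2=\theta_4$. A diagonal is a chord of the face cycle joining graph vertices, so it has length $>d$ (length $d$ would make it an edge subdividing the face), and cutting $Q$ along an internal diagonal into two isosceles triangles with legs $d$ and base $>d$ shows $\area(Q)>2A(d)$ and, with Gauss--Bonnet, that every angle of $Q$ lies strictly in $(\alpha,2\alpha)$. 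Call an interior vertex \emph{bad} if the only non-triangular face at it is a single quadrilateral $Q$; if $\deg v=k$ then the $k-1$ equilateral triangles around $v$ force $Q$ to have angle $2\pi-(k-1)\alpha$ at $v$, and since this must lie in $(\alpha,2\alpha)$ while $2\pi/\alpha\notin\mathbb{Z}$ (as $d\neq d(k)$), the integer $k$ is forced to be $k^\ast:=\lfloor2\pi/\alpha\rfloor\ge6$. Thus \emph{all} bad vertices have degree $k^\ast$ and quadrilateral angle $\theta^\ast:=2\pi-(k^\ast-1)\alpha$. Consequently no quadrilateral face has two \emph{adjacent} bad vertices: two adjacent angles equal to $\theta^\ast$, combined with opposite angles being equal, would make all four angles $\theta^\ast$, so $Q$ would be a regular square of side $d$ with $\alpha_4=\theta^\ast$, i.e.\ $\alpha_4+(k^\ast-1)\alpha=2\pi$, i.e.\ $d=\overline{d}(k^\ast)$ --- excluded. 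Hence a quadrilateral has at most two bad vertices and, if two, they are opposite; moreover each interior neighbour of a bad vertex inside its quadrilateral is (by the same forced-degree argument) incident to a second non-triangular face, hence is not bad and in fact lies in at least two non-triangular faces.

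With this structure I would run a discharging argument: give each $i$-gon face its defect $i-3$, split evenly as $\tfrac{i-3}{i}$ per incident vertex, so the total handed out is $D$. A face with $\ge5$ sides gives at least $\tfrac25>\tfrac13$ per vertex and a quadrilateral gives $\tfrac14$, so every interior vertex lying in a $\ge5$-gon, or in two or more quadrilaterals, already collects $\ge\tfrac13$. Every remaining interior vertex is bad and collects exactly $\tfrac14$ from its unique quadrilateral; it then takes a further $\tfrac1{24}$ from each of its two neighbours inside that quadrilateral, which by the previous paragraph lie in at least two non-triangular faces and hence carry a surplus over $\tfrac13$ large enough to cover the (at most $\tfrac1{24}$ per incident bad vertex) they must lend --- a short case check on how many quadrilaterals and larger faces such a neighbour meets confirms this, with the worst case (a good vertex in exactly two quadrilaterals all of whose quadrilateral-neighbours are bad) being exactly tight. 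Summing, every interior vertex ends with charge $\ge\tfrac13$, giving $D\ge\tfrac13(n-b)-O(1)$.

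I expect the main obstacle to be the boundary bookkeeping rather than the interior combinatorics: bad vertices whose quadrilateral meets the boundary polygon, and ``good'' vertices that would have to subsidize more than their surplus because some of their incident faces touch the boundary, must be absorbed into the $O(1)$ error --- and since a priori this could be as large as a constant times $b$, the clean standalone estimate above probably has to be threaded through the boundary-removal induction (as the $r$-term was in the $\gamma_2$ argument) rather than applied as a black box. The other point requiring care is the strictness $\area(Q)>2A(d)$, hence the \emph{open} interval $(\alpha,2\alpha)$ for quadrilateral angles: it is precisely this strictness that forces the degree of every bad vertex to be $k^\ast$ and makes $\{\overline{d}(k):k\ge6\}$ exactly the set of distances that has to be excluded.
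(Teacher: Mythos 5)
Your proposal is correct and takes essentially the same route as the paper: the core in both cases is a charging argument, powered by the observation that a quadrilateral face is a rhombus whose angles all lie strictly between $\alpha$ and $2\alpha$, so that too many corners surrounded otherwise only by equilateral triangles would force a regular square with $\alpha_4+(k-1)\alpha=2\pi$, excluded by $d\neq\overline{d}(k)$; this yields $e\leq\tfrac83 n-\tfrac23 b-3$, which is then combined with the boundary-removal induction for $\gamma_4$ and with Lemma~\ref{lemma:area:ngons} plus the isoperimetric inequality for $\gamma_5$, exactly as you outline. The only differences are cosmetic (the paper charges interior vertices and pushes charge onto faces rather than the reverse), and your worry about boundary bookkeeping is unfounded: in your scheme a boundary vertex receives at least $\tfrac14$ from any quadrilateral it lies on and lends at most $\tfrac1{12}$ within it, so it never goes negative and the clean bound $D\geq\tfrac13(n-b)$ holds with no error term.
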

Note that \[\frac{\pi}{\alpha} - \frac16 - \sqrt{\left(\frac{\pi}{\alpha}\right)^2-\frac{13\pi}{3\alpha} + \frac{145}{36}} = 2 + \frac{\alpha}{3\pi} + O(\alpha^2)\quad\text{as $d\to\infty$,}\]
which can be compared to the lower bound in Theorem~\ref{thm:lower-bound}.
\begin{proof}
As before, we can assume that $G$ is $2$-connected, and since it is easily checked for $n=1,2$, we also assume that $n\geq 3$, and that the statement holds for all $2\leq n' < n$.
In particular, there is a boundary polygon with $b$ edges, for each $i\geq 3$ we let the number of $i$-faces be $f_i$, and by the Euler formula we have $e=2n-3-b-\sum_{i\geq 4}(i-3)f_i$.

We use a charging argument, and start off by giving each of the $n-b$ interior vertices a charge of $1$.
Since $\alpha\neq 2\pi/q$ for any $q\geq 7$, it follows that each vertex is incident with a non-triangular face.
We then move the charge of each vertex to its incident non-triangular faces by the following two rules:
\begin{enumerate}
\item If all non-triangular faces incident to an interior vertex are quadrilaterals, then we distribute the charge equally among them.
\item Otherwise we distribute the charge equally among all incident faces that are not triangles or quadrilaterals.
\end{enumerate}
It is clear that any bounded $i$-gon, $i\geq 4$, receives a charge of at most $i$.
\begin{claim}
All quadrilaterals receive a charge of at most $3$. 
\end{claim}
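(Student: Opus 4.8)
\emph{Setup and reduction.} Write $\beta_i$ for the interior angle of $Q$ at its vertex $v_i$ ($i=1,2,3,4$, cyclically), and note that every triangular face of a minimum-distance graph is equilateral of side $d$, hence has all angles equal to $\alpha$. Each vertex $v_i$ sends $Q$ a charge lying in $\{0\}\cup\{1/m:m\geq 1\}$, and it sends the full charge $1$ exactly when $v_i$ is interior and $Q$ is its \emph{only} non-triangular incident face. Since two values from $\{0,1,\tfrac12,\tfrac13,\dots\}$ that are each $\leq\tfrac12$ sum to $\leq 1$, a total charge exceeding $3$ forces at least three of the four contributions to equal $1$, and then forces the fourth vertex to be interior and to contribute positively. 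Any three vertices of a $4$-cycle are consecutive, so the plan is to assume that $v_1,v_2,v_3$ each have $Q$ as their unique non-triangular face and that $v_4$ is interior, and to derive a contradiction with $d\neq d(k),\overline d(k)$.

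\emph{The shape of $Q$.} Since all sides of $Q$ equal $d$, an internal diagonal of $Q$ splits it into two triangles with two sides $d$ and common base, which are congruent; hence $\beta_1=\beta_3$ and $\beta_2=\beta_4$, and using the internal diagonal at a would-be reflex vertex shows each $\beta_i<\pi$, so $Q$ is convex. A diagonal of $Q$ is a chord of the $2$-connected graph $G$, so has length $>d$; by the hyperbolic law of cosines this is equivalent to $\beta_1>\alpha$ and $\beta_2>\alpha$. As $v_1,v_2,v_3$ are surrounded by $Q$ and equilateral triangles, $\beta_1=\beta_3=2\pi-j\alpha$ and $\beta_2=\beta_4=2\pi-j'\alpha$ for integers $j=j_1=j_3$ and $j'=j_2$. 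Convexity together with $\alpha<\pi/3$ gives $j,j'\geq 4$, and $\beta_i<\pi$, $\beta_i>\alpha$ pin $\alpha$ to the interval $\pi/j<\alpha<2\pi/(j+1)$ and $\pi/j'<\alpha<2\pi/(j'+1)$.

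\emph{The closure relation.} Dropping the perpendicular from $v_2$ to the diagonal $v_1v_3$ in the isosceles triangle $v_1v_2v_3$ and using the right-triangle identity $\cosh(\text{hyp})=\cot A\cot B$ yields $\cosh d=\cot(\beta_1/2)\cot(\beta_2/2)$. Substituting $\beta_1=2\pi-j\alpha$, $\beta_2=2\pi-j'\alpha$ and $\cosh d=\cos\alpha/(1-\cos\alpha)$ (equivalently $\sin(\alpha/2)=1/(2\cosh(d/2))$), then clearing denominators and applying product-to-sum identities, this becomes the single relation
\[\cos\tfrac{j\alpha}{2}\,\cos\tfrac{j'\alpha}{2}=\cos\alpha\,\cos\tfrac{(j'-j)\alpha}{2}.\]
I would then run a case analysis on $k:=|j'-j|$ (say $j\leq j'$). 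If $k=0$ it reads $\sin(j\alpha/2)=\sqrt2\sin(\alpha/2)=\sin(\alpha_4/2)$, and since $j\alpha/2\in(\pi/2,\pi)$ while $\alpha_4/2\in(0,\pi/2)$ this means $j\alpha+\alpha_4=2\pi$, i.e.\ $d=\overline d(j+1)$ — excluded. If $k=1$ it collapses to $\cos\tfrac{(2j+1)\alpha}{2}=\cos\tfrac{3\alpha}{2}$, whose only solution in the closure of the admissible interval is $\alpha=2\pi/(j+2)=\alpha(d(j+2))$, which is excluded (and lies on the boundary, where a diagonal of $Q$ would have length exactly $d$). If $k\geq 2$, converting both sides to products reduces the relation to
\[\sin\tfrac{(j'+1)\alpha}{2}\,\sin\tfrac{(j-1)\alpha}{2}=-\sin\tfrac{(j'-j-1)\alpha}{2}\,\sin\tfrac{\alpha}{2},\]
and on the admissible interval (using $j\geq 4$ and $\alpha<2\pi/(j'+1)$) each of the four arguments lies strictly in $(0,\pi)$, so the left side is positive and the right side negative — a contradiction. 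This exhausts all cases.

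\emph{Main obstacle.} The delicate point is the third step: one has to massage the geometric identity $\cosh d=\cot(\beta_1/2)\cot(\beta_2/2)$ into a form in which, on the admissible range $\pi/j<\alpha<2\pi/(j'+1)$ forced by convexity and by the diagonals of $Q$ being genuine chords, the two sides are sign-definite (or the equation has only boundary/excluded solutions). Getting the ranges exactly right, and recognizing the $k=0$ case as the square-and-triangles vertex figure that defines $\overline d(j+1)$, is where the real content lies; the charge-arithmetic reduction and the convex-rhombus geometry are routine.
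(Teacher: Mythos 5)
Your proof is correct, and its opening reduction is exactly the paper's: a charge above $3$ forces at least three vertices of $Q$ to donate their full charge $1$, these three cover both angle classes of the equilateral quadrilateral, and hence $2\pi-\beta_1$ and $2\pi-\beta_2$ are both positive integer multiples of $\alpha$. Where you genuinely diverge is the endgame. The paper finishes with a single geometric observation: writing $\gamma\le\delta$ for the two angles, the diagonals being longer than $d$ gives $\gamma>\alpha$, and placing the point $C'$ on the diagonal $AC$ with $AC'=d$ (so that $DC'<d$) gives $\delta/2<\alpha$; thus $\alpha<\gamma\le\delta<2\alpha$, so $|j-j'|\,\alpha=\delta-\gamma<\alpha$ forces $j=j'$ immediately, $Q$ is a square with vertex figure $j\alpha+\alpha_4=2\pi$, and $d=\overline{d}(j+1)$ is excluded. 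You instead extract the exact closure relation $\cosh d=\cot(\beta_1/2)\cot(\beta_2/2)$ from the perpendicular, angle-bisecting diagonals, convert it to $\cos\tfrac{j\alpha}{2}\cos\tfrac{j'\alpha}{2}=\cos\alpha\cos\tfrac{(j'-j)\alpha}{2}$, and classify solutions by $|j-j'|$. I verified the three cases: the identity and the reduction to $\sin(j\alpha/2)=\sin(\alpha_4/2)$ when $j=j'$ are right, the $|j-j'|=1$ case has its only solution at the right endpoint $\alpha=2\pi/(j+2)$ of the admissible interval, which is excluded because the diagonals are \emph{strictly} longer than $d$, and the sign analysis for $|j-j'|\ge 2$ is sound since all four sine arguments lie in $(0,\pi)$. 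What your route buys is a precise inventory of which degenerate distances each case lands on ($\overline{d}$ for $j=j'$, $d(k)$ for $|j-j'|=1$, nothing for $|j-j'|\ge 2$); what it costs is a trigonometric computation where the paper needs only a comparison of isosceles triangles. Note that your own convexity step (base angles of a triangle with legs $d$ and base greater than $d$ are less than $\alpha$) already yields $\beta_i<2\alpha$, which would have let you skip the closure relation and the case analysis entirely.
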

\begin{proof}[Proof of the Claim]
Since a quadrilateral $Q=ABCD$ in a penny graph is a rhombus of side length $d$, its opposite angles are equal.
Let $\angle ABC = \gamma \leq \delta = \angle BCD$ be the two angles.

If only two vertices of $Q$ give their full charge to $Q$, then the total charge has to be at most $1+1+\frac12+\frac12=3$.
Thus without loss of generality, at least three vertices of $Q$ give a charge of $1$ to $Q$.
It follows that both $2\pi-\gamma$ and $2\pi-\delta$ are multiples of $\alpha$, say $k\alpha+\gamma=2\pi=\ell\alpha+\delta$.
Thus $|k-\ell|\alpha=|\gamma-\delta|=\delta-\gamma$.
We next show that $\delta-\gamma < \alpha$.
Note that $\gamma > \alpha$ since the diagonals of $Q$ must be strictly larger than $d$.
Let $C'$ be the point on the diagonal $AC$ such that $AC'=d$.
Since $DA=DC=d$, it follows that $DC' < d$.
Therefore, $\delta/2=\angle DAC < \alpha$, so $\delta < 2\alpha$ and $\delta-\gamma < \alpha$.

It follows that $k=\ell$, so $Q$ is a square with outer angles a multiple of $\alpha$, which contradicts our assumption that $d\neq\overline{d}(k)$.
\end{proof}
Since the total charge is $n-b$, we obtain
\[n-b\leq 3f_4 + 5f_5 + 6f_6 +\dots\leq 3\sum_{i\geq 4}(i-3)f_i.\]
By Lemma~\ref{lemma:2connected}, we obtain $e\leq \frac{8}{3}n-\frac{2}{3}b-3$.
Then $e\leq \gamma_4(d)n$ will follow as long as $b\geq(4-\frac32 \gamma_4(d))n-\frac92$.
On the other hand, we obtain just as in the proof of Proposition~\ref{prop:upper-bound2} by induction that $e\leq \gamma_4(d)n$ as long as $(\tfrac{2\pi}{\alpha}-3-\gamma_4(d))b\leq(\tfrac{2\pi}{\alpha}-6)n+6$.
Thus to cover both cases we need $\gamma_4(d)$ to satisfy \[\left(4-\frac32 \gamma_4(d)\right)n-\frac92\leq\frac{(2\pi/\alpha-6)n+6}{2\pi/\alpha-3-\gamma_4(d)}\]
for all $n$.
It is sufficient to take $\gamma_4(d)$ to be the smaller root of \[\left(4-\frac32x\right)(2\pi/\alpha-3-x)-2\pi/\alpha+6=0,\] which is given by \eqref{eq:gamma4}.

To show $e_d(n)\leq \gamma_5(d)n$, we use the isoperimetric inequality $bd > A$, where $A$ is the area of the polygon enclosed by the boundary cycle.
By Lemma~\ref{lemma:area:ngons}, $A\geq(2n-2-b)(\pi-3\alpha)$.
It follows that $b > \frac{2\pi-6\alpha}{d+\pi-3\alpha}(n-1)$.
Together with the charging bound $e \leq\frac83 n-\frac23 b - 3$, we obtain $e < \gamma_5(d)n$.
This finishes the proof of Proposition~\ref{prop:upper-bound3}.
\end{proof}

%The above results are graphed in Figure~\ref{fig:summary}.

\section{Lower bounds}\label{section:lower-bound}

Let $d > 0$ be an arbitrary distance in the hyperbolic plane.
It is easy to pack $n$ circles of diameter $d$ such that there are $2n-3$ touching pairs.
Thus, $e_d(n)\geq 2n-3$ for all $d > 0$ and all $n\geq 3$.

We now describe a better construction.
Recall that $\alpha_4 = 2\arcsin(\sqrt{2}\sin(\alpha/2))$ is the angle of a square of side length $d$.

\begin{theorem}\label{thm:lower-bound}
For any $n\in\mathbb{N}$ and $d > 0$, we have $e_d(n) \geq \left(2+\frac{1}{4q-14}\right)n-4$, where $q=2+\lfloor(2\pi-\alpha_4)/\alpha\rfloor$.
Thus \[ c(d) \geq \gamma_6(d) = 2+\frac{1}{4q-14} = 2 +\frac{\alpha}{8\pi} +O(\alpha^2)\quad\text{as $d\to\infty$.}\]
\end{theorem}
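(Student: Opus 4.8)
The plan is to prove the equivalent bound $e_d(n)\ge(2+\tfrac{1}{4q-14})n-4$ by an explicit construction, valid for every $d>0$ and every $n$. The only input about $d$ will be the defining inequality of $q=2+\lfloor(2\pi-\alpha_4)/\alpha\rfloor$, namely
\[\alpha_4+(q-2)\alpha\le 2\pi<\alpha_4+(q-1)\alpha,\]
which says precisely that $q-2$ equilateral triangles and one square of side length $d$ just fit around a common vertex; this is why $q$ governs the efficiency of the construction.

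First I would fix the building block: a vertex $c$ carrying a fan of $q-2$ equilateral triangles of side $d$ followed by one square of side $d$, all consecutively edge-sharing around $c$. (If $\alpha_4+(q-2)\alpha<2\pi$ a wedge is left empty at $c$; since a minimum-distance graph only records pairs of points forced to be at distance $d$, this does no harm and merely makes $c$ a boundary point.) I would then string finitely many congruent copies of this block along a slowly turning chain, consecutive blocks sharing a square side, arranged so that after each batch of about $4q-14$ newly added vertices the chain returns near enough to an earlier part of itself that exactly one further pair of points lands at distance $d$, contributing one ``bonus'' touching pair. Thus, up to bounded end effects, the construction yields $2$ edges per vertex plus one extra edge for every $4q-14$ vertices.

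The counting is then routine. The resulting graph is planar and $2$-connected with every bounded face a triangle or a square, so Lemma~\ref{lemma:2connected} gives $e=3n-3-b-f_4$, where $f_4$ is the number of square faces; estimating $f_4$ and the boundary length $b$ in terms of the number of blocks, and accounting for the two ends of the chain (which is where the additive $-4$ comes from), gives $e\ge(2+\tfrac{1}{4q-14})n-4$. The asymptotic $\gamma_6(d)=2+\alpha/(8\pi)+O(\alpha^2)$ as $d\to\infty$ then follows from $\sin(\alpha/2)=(2\cosh(d/2))^{-1}$ and $\sin(\alpha_4/2)=\sqrt2\sin(\alpha/2)$, which give $\alpha_4/\alpha\to\sqrt2$ and hence $4q-14=4\lfloor(2\pi-\alpha_4)/\alpha\rfloor-6\sim 8\pi/\alpha$.

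The hard part will not be the edge count but verifying that the whole configuration is a genuine packing, i.e.\ that no two points lie closer than $d$. Inside a single block this is exactly the inequality $\alpha_4+(q-2)\alpha\le2\pi$; between consecutive blocks it follows from the shared square; the delicate points are that non-consecutive blocks stay at distance $\ge d$, and that the chain can be made to loop back after exactly $4q-14$ vertices and no sooner. Here hyperbolic geometry helps: a configuration confined to a bounded-width tube around a geodesic-like spine necessarily escapes to infinity along that spine, so one only needs a quantitative estimate — via the hyperbolic law of cosines — that each block advances the spine by a fixed amount while the transverse extent stays bounded, forcing distant blocks to be far apart in the plane; and that the shortest geometrically realizable loop of such blocks with all edges $\ge d$ has exactly $4q-14$ vertices, shorter loops being ruled out because the total turning they can accumulate, bounded by the per-vertex angular slack $2\pi-\alpha_4-(q-2)\alpha$, is too small to close up in the plane.
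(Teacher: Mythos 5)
Your proposal correctly identifies the role of the defining inequality $\alpha_4+(q-2)\alpha\le 2\pi<\alpha_4+(q-1)\alpha$ and the asymptotics $\alpha_4\sim\sqrt2\,\alpha$, $4q-14\sim 8\pi/\alpha$, but the core of the construction --- the mechanism that produces the surplus $\tfrac{1}{4q-14}$ edges per vertex --- is missing, and the mechanism you sketch in its place does not work for all $d$. You propose to gain one bonus edge each time a chain of blocks ``returns near enough to an earlier part of itself that exactly one further pair of points lands at distance $d$,'' with the return happening after exactly $4q-14$ vertices. You yourself flag this as the hard part, and it is: the only turning available to the chain is the per-vertex angular slack $2\pi-\alpha_4-(q-2)\alpha$, and for $d$ slightly larger than $\overline{d}(k)$ (where $\alpha_4+(k-1)\alpha=2\pi$) one has $q=k+1$ and this slack tends to $0$. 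A chain with vanishing turning budget is forced to run off along an essentially geodesic spine and never returns to itself (and by Gauss--Bonnet, closing up or doubling back in the hyperbolic plane costs strictly more turning than in the Euclidean plane, not less). So the claim that a loop closes after exactly $4q-14$ vertices, and no sooner, cannot hold uniformly in $d$; yet the theorem must be proved for every $d>0$. The number $4q-14$ is not the length of any geometric loop --- reverse-engineering it from the statement has led you to a construction principle that the geometry does not support.

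The paper's construction gains its extra edges by an entirely local device, with no returning or looping at all. One builds a fan of $2q-5$ triangles around the two endpoints $B,C$ of an edge, together with a square $ABCD$ wedged into the remaining gap (this is where $q=2+\lfloor(2\pi-\alpha_4)/\alpha\rfloor$ enters), giving $2q-3$ vertices and $4q-9$ edges. One then reflects the whole figure in the line $\ell_1$ through the two extreme fan vertices $A_{q-3}$ and $A_{-q+3}$: since those two vertices are fixed by the reflection, the reflected copy contributes only $2q-5$ new vertices but a full $4q-9$ new edges --- one more than $2$ per new vertex. Alternating this reflection step with a neutral step (building $2q-9$ new vertices and $4q-18$ new edges of fans around two reflected vertices) yields the rate $\frac{(4q-9)+(4q-18)}{(2q-5)+(2q-9)}=2+\frac{1}{4q-14}$. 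The successive mirror lines are ultraparallel, so the configuration marches to infinity and distinct iterations never interact --- the opposite of what your chain is asked to do. To repair your argument you would need to replace the loop-closing mechanism with something like this reflection trick (or another local source of a $+1$ edge surplus), and then verify the packing condition, which in the paper reduces to showing that $B$, $C$ and the $A_i$ lie at distance at least $d/2$ from the mirror line via the angle estimates $\beta_q\ge\alpha/2$ and $\beta_q+3\alpha/2\le\pi$.
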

\begin{proof}
We first fix and arbitrary $q\geq 6$.
The specific value of $q$ given in the statement of the theorem will follow from the constraints that the construction given below forms a penny graph.
Let $\triangle A_0BC$ be equilateral triangle with side $d$.
Let points $A_1,\dots,A_{q-2}$ be at distance $d$ from $B$ in clockwise order all at distance $d$ around $B$, with $\triangle A_iA_{i+1}B$ equilateral, $i=0,\dots,q-3$.
Similarly, let $A_{-1},\dots,A_{-q+2}$ be at distance $d$ from $C$ in anticlockwise order at distance $d$ around $C$, with  $\triangle A_{-i}A_{-i-1}B$ equilateral, $i=0,\dots,q-3$.
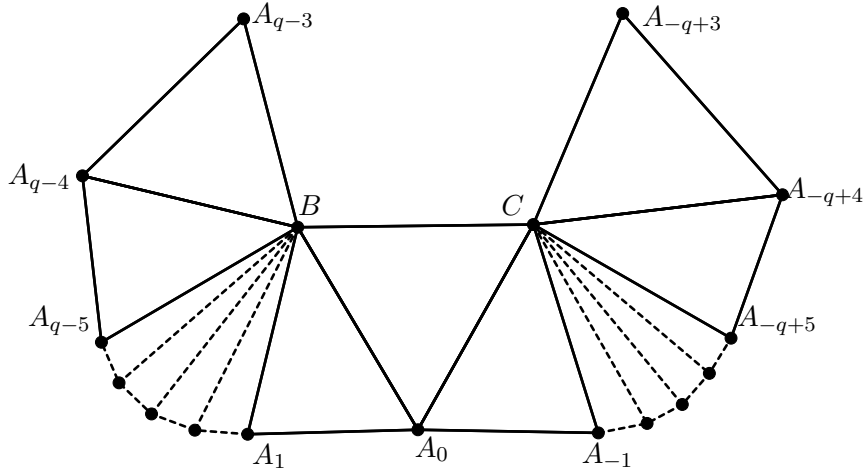
\begin{figure}
\begin{tikzpicture}[line cap=round,line join=round,>=triangle 45,scale=0.9]
\draw[line width=1pt] (2,5.12) -- (-1.48,5.08) -- (0.2946410161513772,2.0862315948301524) -- cycle;
\draw[line width=1pt] (-1.48,5.08) -- (-2.22,2.02) -- (0.2946410161513772,2.0862315948301524) -- cycle;
\draw[line width=1pt] (2,5.12) -- (2.96,2.04) -- (0.2946410161513772,2.0862315948301524) -- cycle;
\draw[line width=1pt] (-1.48,5.08) -- (-4.38,3.38) -- (-4.66,5.84) -- cycle;
\draw[line width=1pt] (-4.66,5.84) -- (-2.28,8.16) -- (-1.48,5.08) -- cycle;
\draw[line width=1pt] (2,5.12) -- (4.92,3.44) -- (5.68,5.56) -- cycle;
\draw[line width=1pt] (2,5.12) -- (5.68,5.56) -- (3.32,8.24) -- cycle;
\draw [line width=1pt] (2,5.12)-- (-1.48,5.08);
\draw [line width=1pt] (-1.48,5.08)-- (0.2946410161513772,2.0862315948301524);
\draw [line width=1pt] (0.2946410161513772,2.0862315948301524)-- (2,5.12);
\draw [line width=1pt] (-1.48,5.08)-- (-2.22,2.02);
\draw [line width=1pt] (-2.22,2.02)-- (0.2946410161513772,2.0862315948301524);
\draw [line width=1pt] (0.2946410161513772,2.0862315948301524)-- (-1.48,5.08);
\draw [line width=1pt] (2,5.12)-- (2.96,2.04);
\draw [line width=1pt] (2.96,2.04)-- (0.2946410161513772,2.0862315948301524);
\draw [line width=1pt] (0.2946410161513772,2.0862315948301524)-- (2,5.12);
\draw [line width=1pt] (-1.48,5.08)-- (-4.38,3.38);
\draw [line width=1pt] (-4.38,3.38)-- (-4.66,5.84);
\draw [line width=1pt] (-4.66,5.84)-- (-1.48,5.08);
\draw [line width=1pt] (-4.66,5.84)-- (-2.28,8.16);
\draw [line width=1pt] (-2.28,8.16)-- (-1.48,5.08);
\draw [line width=1pt] (-1.48,5.08)-- (-4.66,5.84);
\draw [line width=1pt] (2,5.12)-- (4.92,3.44);
\draw [line width=1pt] (4.92,3.44)-- (5.68,5.56);
\draw [line width=1pt] (5.68,5.56)-- (2,5.12);
\draw [line width=1pt] (2,5.12)-- (5.68,5.56);
\draw [line width=1pt] (5.68,5.56)-- (3.32,8.24);
\draw [line width=1pt] (3.32,8.24)-- (2,5.12);
\draw [line width=1pt,dash pattern=on 2pt off 2pt] (-2.22,2.02)-- (-3,2.08)-- (-3.64,2.32)-- (-4.12,2.78)-- (-4.38,3.38);
\draw [line width=1pt,dash pattern=on 2pt off 2pt] (2.96,2.04)-- (3.68,2.18)-- (4.2,2.46)-- (4.6,2.92)-- (4.92,3.44);
\draw [line width=1pt,dash pattern=on 2pt off 2pt] (-1.48,5.08)-- (-4.12,2.78);
\draw [line width=1pt,dash pattern=on 2pt off 2pt] (-3.64,2.32)-- (-1.48,5.08);
\draw [line width=1pt,dash pattern=on 2pt off 2pt] (-1.48,5.08)-- (-3,2.08);
\draw [line width=1pt,dash pattern=on 2pt off 2pt] (2,5.12)-- (3.68,2.18);
\draw [line width=1pt,dash pattern=on 2pt off 2pt] (4.2,2.46)-- (2,5.12);
\draw [line width=1pt,dash pattern=on 2pt off 2pt] (2,5.12)-- (4.6,2.92);
\draw [fill=black] (2,5.12) circle (2.5pt);
\draw[color=black] (1.7,5.4) node {$C$};
\draw [fill=black] (-1.48,5.08) circle (2.5pt);
\draw[color=black] (-1.3,5.4) node {$B$};
\draw [fill=black] (0.2946410161513772,2.0862315948301524) circle (2.5pt);
\draw[color=black] (0.5,1.8) node {$A_0$};
\draw [fill=black] (-2.22,2.02) circle (2.5pt);
\draw[color=black] (-1.8985160575858246,1.7) node {$A_1$};
\draw [fill=black] (2.96,2.04) circle (2.5pt);
\draw[color=black] (3.0969213732004444,1.7) node {$A_{-1}$};
\draw [fill=black] (-4.38,3.38) circle (2.5pt);
\draw[color=black] (-5,3.7) node {$A_{q-5}$};
\draw [fill=black] (-4.66,5.84) circle (2.5pt);
\draw[color=black] (-5.3,5.8) node {$A_{q-4}$};
\draw [fill=black] (-2.28,8.16) circle (2.5pt);
\draw[color=black] (-1.7,8.187176079734211) node {$A_{q-3}$};
\draw [fill=black] (4.92,3.44) circle (2.5pt);
\draw[color=black] (5.6,3.7) node {$A_{-q+5}$};
\draw [fill=black] (5.68,5.56) circle (2.5pt);
\draw[color=black] (6.3,5.6) node {$A_{-q+4}$};
\draw [fill=black] (3.32,8.24) circle (2.5pt);
\draw[color=black] (4.2,8.144296788482828) node {$A_{-q+3}$};
\draw [fill=black] (-3,2.08) circle (2.5pt);
\draw [fill=black] (-3.64,2.32) circle (2.5pt);
\draw [fill=black] (-4.12,2.78) circle (2.5pt);
\draw [fill=black] (3.68,2.18) circle (2.5pt);
\draw [fill=black] (4.2,2.46) circle (2.5pt);
\draw [fill=black] (4.6,2.92) circle (2.5pt);
\end{tikzpicture}
\caption{First step in lower bound construction}\label{fig:construction}
\end{figure}
Then the points $\{B,C\}\cup\{A_i: -q+3\leq i\leq q-3\}$ form a penny graph as long as $A_{q-3}C,BA_{-q+3},A_{q-3}A_{-q+3}\geq d$.
It is sufficient to ensure that $A_{q-3}A_{-q+3}\geq d$, since then the triangle inequality will ensure that $A_{q-3}C+BA_{-q+3}\geq A_{q-3}A_{-q+3}+BC\geq 2d$, hence $A_{q-3}C=BA_{-q+3}\geq d$.
The inequality $A_{q-3}A_{-q+3}\geq d$ is equivalent to 
\begin{equation}\label{1}
\beta_q \leq 2\pi-(q-2)\alpha,
\end{equation}
where $\beta_q=\angle A_{q-3}A_{-q+3}C=\angle A_{-q+3}A_{q-3}B$.
We fix $q$ to be the maximum value such that \eqref{1} holds.
Thus if we also construct equilateral triangles $\triangle A_{q-3}A_{q-2}B$ and $\triangle A_{-q+3}A_{-q+2}C$,
then we have
\begin{equation}\label{1'}
\beta_{q+1} > 2\pi-(q-1)\alpha.
\end{equation}
Since $A_{q-3}A_{-q+3}\geq d > A_{q-2}A_{-q+2}$, it follows that there exists a square $ABCD$ where $A$ is inside the angle $\angle A_{q-3}BA_{q-2}$ and $D$ is inside the angle $\angle A_{-q+3}CA_{-q+2}$.
Thus \eqref{1} and \eqref{1'} can be improved to
\begin{equation}\label{1''}
\beta_q \leq\alpha_4\leq 2\pi-(q-2)\alpha,
\end{equation}
and
\begin{equation}\label{1'''}
\beta_{q+1} > \alpha_4 > 2\pi-(q-1)\alpha,
\end{equation} where $\alpha_4$ is the angle of a square of side length $d$.
Thus $q=2+\lfloor(2\pi-\alpha_4)/\alpha\rfloor$.

One further constraint that we need is $(q-2)\alpha > \pi$,
which implies that $A_{q-3}$ and $A_0$ are on opposite sides of the line through $B$ and $C$,
as implicitly assumed in Figure~\ref{fig:construction}.
This follows from \eqref{1'''} and the inequalities $\alpha < \pi/3$ and $\alpha_4 < \pi/2$.

So far, we have $2q-3$ vertices and $4q-9$ edges.
For the next step, reflect everything in the line $\ell_1$ through $A_{q-3}$ and $A_{-q+3}$.
Then we get $2q-5$ reflected vertices and $4q-9$ reflected edges.
In order for the reflected points to be at distance at least $d$ from the original points, it is sufficient for the orginal points $B$ and $A_0,\dots,A_{q-4}$ to be at distance at least $d/2$ from the line $\ell_1$.
To show that $B$ is at distance at least $d/2$ from $\ell_1$, it is sufficient to show that
\begin{equation}\label{2}
2\beta_q\geq\alpha,
\end{equation}
since this will give that $BB'\geq d$ in the triangle $\triangle BA_{q-3}B'$, where $B'$ is the reflection of $B$ in $\ell_1$.
Similarly, considering the triangle $A_{q-4}A_{q-3}A_{q-4}'$ (where $A_{q-4}'$ is the reflection of $A_{q-4}$ in $\ell_1$), to show that $A_{q-4}$ is at distance at least $d/2$ from $\ell_1$, it is sufficient to show that
\begin{equation}\label{3}
\pi-(\alpha+\beta_q)\geq\alpha/2.
\end{equation}
Given \eqref{2} and \eqref{3}, $B$, $C$ and $A_{q-4}$ are at distance at least $d/2$ from $\ell_1$, and the hypercycle (equidistant curve) at distance $d/2$ from the line $\ell_1$ on the side of $B$ and $C$, intersects the circle with centre $B$ and radius $d$ in points on the arc from $A_{q-4}$ to $B$ that contains $A_{q-3}$.
It then follows that all the points $A_1,\dots,A_{q-5}$ are at distance at least $d/2$ from $\ell_1$ as they are outside this arc.

Before finishing the construction, we show \eqref{2} and \eqref{3}.
Let $E$ be such that $\triangle ABE$ is equilateral and $E$ and $C$ are on opposite sides of the line $AB$ (Figure~\ref{fig:lower_bound}).
\begin{figure}
\centering
\begin{tikzpicture}[line cap=round,line join=round,>=triangle 45,x=1cm,y=1cm,scale=2]
\clip(-3.2,-1.3) rectangle (1.3,1.8);
\coordinate (A) at (-1,1);
\coordinate (B) at (-1,-1);
\coordinate (C) at (1,-1);
\coordinate (D) at (1,1);
\coordinate (E) at (-2.732050807568877,0);
\coordinate (Aq3) at (-1.845236523481399,0.8126155740732999);
\coordinate (Aq4) at (-2.9923893961834915,-0.8256885145046834);
\coordinate (P) at (0,0.8126155740732999);
\coordinate (Q) at (0,0);
\coordinate (bottom) at (0,-1.3);
\coordinate (top) at (0,1.8);
\draw[line width=1pt] (A) -- (B) -- (C) -- (D) -- cycle;
\draw[line width=1pt] (B) -- (A) -- (E) -- cycle;
\draw[line width=1pt] (B) -- (Aq3) -- (Aq4) -- cycle;
\draw[line width=1pt] (Aq3) -- (P);
\draw[line width=1pt] (E) -- (Q);
\draw [line width=.8pt,dotted] (bottom) -- (top);
\node[above left] at (A) {$A$};
\node[below left] at (B) {$B$};
\node[below right] at (C) {$C$};
\node[above right] at (D) {$D$};
\node[below right] at (top) {$m$};
\node[right] at (P) {$P$};
\node[right] at (Q) {$Q$};
\node[above] at (Aq3) {$A_{q-3}$};
\node[left] at (E) {$E$};
\node[below] at (Aq4) {$A_{q-4}$};
\draw[red!80!black, thick] pic["$\beta_q$", draw=red!80!black, angle radius=14mm, angle eccentricity=.7] {angle=B--Aq3--P};
\draw[blue, thick] pic["$\frac{\alpha}{2}$", draw=blue, angle radius=14mm, angle eccentricity=.8] {angle=B--E--Q};
\draw[green!60!black, thick] pic["$\cdot$", draw=green!60!black, angle radius=4mm, angle eccentricity=.6] {angle=E--Q--bottom};
\draw[green!60!black, thick] pic["$\cdot$", draw=green!60!black, angle radius=4mm, angle eccentricity=.6] {angle=Aq3--P--bottom};
\begin{scriptsize}
\draw [fill=black] (-1,1) circle (1pt);
\draw [fill=black] (-1,-1) circle (1pt);
\draw [fill=black] (1,-1) circle (1pt);
\draw [fill=black] (1,1) circle (1pt);
\draw [fill=black] (-2.732050807568877,0) circle (1pt);
\draw [fill=black] (0,0) circle (1pt);
\draw [fill=black] (-1.845236523481399,0.8126155740732999) circle (1pt);
\draw [fill=black] (-1,-1) circle (1pt);
\draw [fill=black] (-2.9923893961834915,-0.8256885145046834) circle (1pt);
\draw [fill=black] (0,0.8126155740732999) circle (1pt);
\end{scriptsize}
\end{tikzpicture}
\caption{}\label{fig:lower_bound}
\end{figure}
Since $A$ is inside the angle $\angle A_{q-3}BA_{q-2}$, it follows that $A_{q-3}$ is inside the angle $\angle EBA$.
Let $P$ be the foot of the perpendicular from $A_{q-3}$ to the perpendicular bisector $m$ of $BC$, and $Q$ the foot of the perpendicular from $E$ to $m$.
Then $\beta_q=\angle BA_{q-3}P \geq \angle BEQ = \alpha/2$, which proves \eqref{2}.
Since $\beta_q\leq\alpha_4$ and $\alpha\leq\pi/3$, we have $\beta_q+3\alpha/2\leq\pi$, which is equivalent to~\eqref{3}.

Let $A_0'$ be the reflection of $A_0$ and $A_1'$ be the reflection of $A_1$ in $\ell_1$.
Build up triangles around $A_0'$ and $A_1'$ just as was done originally around $B$ and $C$.
This creates an additional $q-5$ triangles around $A_0'$, with an additional $q-5$ vertices and $2q-10$ additional edges, and an additional $q-4$ triangles around $A_1'$ with an additional $q-4$ vertices and $2q-8$ additional edges.
This figure has a point symmetry around the midpoint of $B'A_0'$.
Thus if we again reflect in a line $\ell_2$, then $\ell_1$ and $\ell_2$ are point reflections of each other, hence are ultraparallel.

After $i$ iterations, we have added $(2q-5)i$ vertices and $4q-9$ edges from reflection in the lines $\ell_1$ to $\ell_i$, and another $(2q-9)(i-1)$ vertices and $(4q-18)(i-1)$ edges from constructing triangles,
to obtain \[n_{i}=2q-3 + (2q-5)i+(2q-9)(i-1)\] vertices and \[e_{i}=4q-9+(4q-9)i+(4q-18)(i-1)\] edges.
Eliminating $i=(n-6)/(4q-14)$, we obtain that as long as $n-6$ is divisible by $4q-14$, we have constructed a hyperbolic penny graph with \[e=\left(2+\frac{1}{4q-14}\right)n-3+\frac{14}{4q-14}\] edges.
We can fill in the inbetween values of $n$ by repeatedly adding vertices of degree $2$.
This gives us at least $(2+\frac{1}{4q-14})n-4$ edges in the worst case where $n-6$ is one less than a multiple of $4q-14$.
Since any two consecutive reflecting lines $\ell_i$ and $\ell_{i+1}$ are ultraparallel, all reflecting lines are pairwise ultraparallel, so there is never any overlapping in any iteration.
\end{proof}

\bibliography{bibliography}

@article {MRT23,
    AUTHOR = {Malen, Greg and Rold\'an, \'Erika and Toal\'a-Enr\'iquez,
              Rosemberg},
     TITLE = {Extremal {{p,q}\{p, q\}}-animals},
   JOURNAL = {Ann. Comb.},
  FJOURNAL = {Annals of Combinatorics},
    VOLUME = {27},
      YEAR = {2023},
    NUMBER = {1},
     PAGES = {169--209},
      ISSN = {0218-0006,0219-3094},
   MRCLASS = {05B50 (05A16 05A20 05C07 05D99 52B60 52C05)},
  MRNUMBER = {4557922},
MRREVIEWER = {Vladimir\ D.\ Tonchev},
       DOI = {10.1007/s00026-022-00631-1},
       URL = {https://doi.org/10.1007/s00026-022-00631-1},
}

@article {B00,
    AUTHOR = {Bowen, Lewis},
     TITLE = {Circle packing in the hyperbolic plane},
   JOURNAL = {Math. Phys. Electron. J.},
  FJOURNAL = {Mathematical Physics Electronic Journal},
    VOLUME = {6},
      YEAR = {2000},
     PAGES = {Paper 6, 10},
      ISSN = {1086-6655},
   MRCLASS = {52C26 (52C15)},
  MRNUMBER = {1797777},
}

@article {B02,
    AUTHOR = {B\"or\"oczky, Jr., K.},
     TITLE = {Discrete point sets in the hyperbolic plane},
   JOURNAL = {Studia Sci. Math. Hungar.},
  FJOURNAL = {Studia Scientiarum Mathematicarum Hungarica. Combinatorics,
              Geometry and Topology (CoGeTo)},
    VOLUME = {39},
      YEAR = {2002},
    NUMBER = {1-2},
     PAGES = {21--36},
      ISSN = {0081-6906,1588-2896},
   MRCLASS = {52C15 (51M10)},
  MRNUMBER = {1909146},
MRREVIEWER = {A.\ Florian},
       DOI = {10.1556/SScMath.39.2002.1-2.2},
       URL = {https://doi.org/10.1556/SScMath.39.2002.1-2.2},
}

@incollection {BCK85,
    AUTHOR = {Bezdek, K. and Connelly, R. and Kert\'esz, G.},
     TITLE = {On the average number of neighbors in a spherical packing of
              congruent circles},
 BOOKTITLE = {Intuitive geometry ({S}i\'ofok, 1985)},
    SERIES = {Colloq. Math. Soc. J\'anos Bolyai},
    VOLUME = {48},
     PAGES = {37--52},
 PUBLISHER = {North-Holland, Amsterdam},
      YEAR = {1987},
      ISBN = {0-444-87933-1},
   MRCLASS = {52A45 (05B40 52A40)},
  MRNUMBER = {910699},
MRREVIEWER = {A.\ Florian},
}

@article{RT22,
    author = {Rold\'an, \'Erika and Toal\'a-Enr\'iquez,
              Rosemberg},
    title = {Isoperimetric Formulas for Hyperbolic Animals},
    journal = {Preprint, ar{X}iv:2206.14910},
    year = {2024}
}

@article {Oler61,
    AUTHOR = {Oler, N.},
     TITLE = {An inequality in the geometry of numbers},
   JOURNAL = {Acta Math.},
  FJOURNAL = {Acta Mathematica},
    VOLUME = {105},
      YEAR = {1961},
     PAGES = {19--48},
      ISSN = {0001-5962},
   MRCLASS = {10.25},
  MRNUMBER = {133064},
MRREVIEWER = {R. P. Bambah},
       DOI = {10.1007/BF02559533},
}

@article {R51,
    AUTHOR = {Rogers, C. A.},
     TITLE = {The closest packing of convex two-dimensional domains},
   JOURNAL = {Acta Math.},
  FJOURNAL = {Acta Mathematica},
    VOLUME = {86},
      YEAR = {1951},
     PAGES = {309--321},
      ISSN = {0001-5962},
   MRCLASS = {52.0X},
  MRNUMBER = {46670},
MRREVIEWER = {L. Fejes T\'{o}th},
       DOI = {10.1007/BF02392671},
}

@article {Thue,
    AUTHOR = {Thue, A.},
     TITLE = {{\"U}ber die dichteste {Z}usammenstellung von kongruenten {K}reisen in einer {E}bene},
   JOURNAL = {Christiana Videnskabs-Selskabets Skrifter. I. Math.-Naturv. Klasse},
    VOLUME = {1},
      YEAR = {1910},
     PAGES = {1--9},
}

@article {HS03,
    AUTHOR = {Higuchi, Yusuke and Shirai, Tomoyuki},
     TITLE = {Isoperimetric constants of {$(d,f)$}-regular planar graphs},
   JOURNAL = {Interdiscip. Inform. Sci.},
  FJOURNAL = {Interdisciplinary Information Sciences},
    VOLUME = {9},
      YEAR = {2003},
    NUMBER = {2},
     PAGES = {221--228},
      ISSN = {1340-9050},
   MRCLASS = {05C50 (05C10)},
  MRNUMBER = {2038013},
MRREVIEWER = {Jason Douglas Rosenhouse},
       DOI = {10.4036/iis.2003.221},
}

@article {HJL02,
    AUTHOR = {H\"{a}ggstr\"{o}m, Olle and Jonasson, Johan and Lyons, Russell},
     TITLE = {Explicit isoperimetric constants and phase transitions in the
              random-cluster model},
   JOURNAL = {Ann. Probab.},
  FJOURNAL = {The Annals of Probability},
    VOLUME = {30},
      YEAR = {2002},
    NUMBER = {1},
     PAGES = {443--473},
      ISSN = {0091-1798},
   MRCLASS = {60K35 (82B20 82B26 82B43)},
  MRNUMBER = {1894115},
MRREVIEWER = {Filippo Cesi},
       DOI = {10.1214/aop/1020107775},
       URL = {https://doi-org.gate2.library.lse.ac.uk/10.1214/aop/1020107775},
}

@article {S40,
    AUTHOR = {Schmidt, Erhard},
     TITLE = {\"Uber die isoperimetrische {A}ufgabe im {$n$}-dimensionalen
              {R}aum konstanter negativer {K}u\"ummung. {I}. {D}ie
              isoperimetrischen {U}ngleichungen in der hyperbolischen
              {E}bene und f\"ur {R}otationsk\"orper im {$n$}-dimensionalen
              hyperbolischen {R}aum},
   JOURNAL = {Math. Z.},
  FJOURNAL = {Mathematische Zeitschrift},
    VOLUME = {46},
      YEAR = {1940},
     PAGES = {204--230},
      ISSN = {0025-5874,1432-1823},
   MRCLASS = {52.0X},
  MRNUMBER = {2196},
MRREVIEWER = {T.\ Kubota},
       DOI = {10.1007/BF01181439},
       URL = {https://doi.org/10.1007/BF01181439},
}

@misc{oeis,
    Author = {{OEIS Foundation Inc.}},
    Note = {Published electronically at \url{http://oeis.org}},
    Title = {The {O}n-{L}ine {E}ncyclopedia of {I}nteger {S}equences},
    Year = 2025}

@article{Harborth1974,
  title={Solution to problem {664 A}},
  author={Harborth, Heiko},
  journal={Elem. Math},
  volume={29},
  pages={14--15},
  year={1974}
}

@article {E46,
    AUTHOR = {Erd\H{o}s, P.},
     TITLE = {On sets of distances of {$n$} points},
   JOURNAL = {Amer. Math. Monthly},
  FJOURNAL = {American Mathematical Monthly},
    VOLUME = {53},
      YEAR = {1946},
     PAGES = {248--250},
      ISSN = {0002-9890,1930-0972},
   MRCLASS = {48.0X},
  MRNUMBER = {15796},
       DOI = {10.2307/2305092},
       URL = {https://doi.org/10.2307/2305092},
}

@article{Toth97,
title = {The shortest distance among points in general position},
journal = {Computational Geometry},
volume = {8},
number = {1},
pages = {33-38},
year = {1997},
issn = {0925-7721},
doi = {https://doi.org/10.1016/0925-7721(95)00055-0},
url = {https://www.sciencedirect.com/science/article/pii/0925772195000550},
author = {Géza Tóth},
}

@article{Csizmadia98,
  title={On the independence number of minimum distance graphs},
  author={Csizmadia, Gy{\"o}rgy},
  journal={Discrete \& Computational Geometry},
  volume={20},
  number={2},
  pages={179--187},
  year={1998},
  publisher={Springer}
}

@article{Sagdeev25,
   title={General penny graphs are at most $43/18$-dense},
   volume={14},
   ISSN={2996-2196},
   url={http://dx.doi.org/10.2140/cnt.2025.14.75},
   DOI={10.2140/cnt.2025.14.75},
   number={1},
   journal={Combinatorics and Number Theory},
   publisher={Mathematical Sciences Publishers},
   author={Sagdeev, Arsenii},
   year={2025},
   month=jan, pages={75–89}
}

@article{Pach-Toth96,
  author = {J. Pach and G. T\'oth},
  title = {On the independence number of coin graphs},
  journal = {Geombinatorics},
  volume = {6},
  pages = {30--33},
  year = {1996}
}

@article{S02,
  author = {K. J. Swanepoel},
  title = {Independence numbers of planar contact graphs},
  journal = {Discrete Comput. Geom.},
  volume = {28},
  pages = {649--670},
  year = {2002}
}

@InProceedings{Eppstein18,
author="Eppstein, David",
editor="Frati, Fabrizio
and Ma, Kwan-Liu",
title="Triangle-Free Penny Graphs: Degeneracy, Choosability, and Edge Count",
booktitle="Graph Drawing and Network Visualization",
year="2018",
publisher="Springer International Publishing",
address="Cham",
pages="506--513",
isbn="978-3-319-73915-1"
}

@incollection{Kupitz94,
  author = {Y. S. Kupitz},
  title = {On the maximal number of appearances of the minimal distance among $n$ points in the plane},
  booktitle = {Intuitive Geometry (Szeged, 1991)},
  series = {Colloq. Math. Soc. J\'anos Bolyai},
  volume = {63},
  publisher = {North-Holland},
  pages = {217--244},
  year = {1994}
}

@article{Bezdek12,
  author = {K. Bezdek},
  title = {Contact numbers for congruent sphere packings in Euclidean 3-space},
  journal = {Discrete Comput. Geom.},
  volume = {48},
  number = {2},
  pages = {298--309},
  year = {2012}
}

@incollection {S-survey,
    AUTHOR = {Swanepoel, Konrad J.},
     TITLE = {Combinatorial distance geometry in normed spaces},
 BOOKTITLE = {New trends in intuitive geometry},
    SERIES = {Bolyai Soc. Math. Stud.},
    VOLUME = {27},
     PAGES = {407--458},
 PUBLISHER = {J\'anos Bolyai Math. Soc., Budapest},
      YEAR = {2018},
      ISBN = {978-3-662-57412-6; 978-3-662-57413-3},
   MRCLASS = {52C10 (05C05 05C12 52A21 52A37)},
  MRNUMBER = {3889270},
}

@misc{Taxel,
    author = {Taxel, Parcly},
    title = {Order-$7$ triangular tiling},
    howpublished = "\url{https://commons.wikimedia.org/wiki/File:Order-7_triangular_tiling.svg}",
    note = {Wikipedia. Last accessed: 31 December 2025},
}
\bibliographystyle{abbrv}

\end{document}